\theoremstyle{definition}
\newtheorem{example}{Example}
\newtheorem{remark}{Remark}
\newtheorem{definition}{Definition}
\theoremstyle{plain}
\newtheorem{theorem}{Theorem}
\newtheorem{lemma}{Lemma}
\newtheorem{corollary}{Corollary}
\newtheorem{proposition}{Proposition}
\newtheorem{fact}[theorem]{Fact}
\newmdtheoremenv{theo}{Theorem}
\newcommand{\forae}{%
  \tikz[baseline={(forall.base)}]{
    \node[inner sep=0pt, outer sep=0pt] (forall) {$\forall$};
    \fill[white] (-0.08em,0.05ex) rectangle (0.08em,0.25ex);
  }%
}
\definecolor{shadethmcolor}{HTML}{FFFFFF}
\definecolor{shaderulecolor}{HTML}{000000}
\newcommand{\R}{\mathbb{R}}
\newcommand{\fg}{\mathfrak{g}}
\newcommand{\dom}{\mathrm{dom}}
\newcommand{\rank}{\mathrm{rank}\hspace{.3mm}}
\newcommand{\diag}{\mathrm{diag}}
\newcommand{\gph}{\mathrm{gph}}
\newcommand{\vp}{\varphi}
\newcommand{\cM}{M}
\newcommand{\epii}{\mathrm{epi}}
\newcommand{\rint}{\mathrm{int}\hspace*{.5mm}}
\newcommand{\cX}{\mathcal{X}}
\newcommand{\N}{\mathbb{N}}
\def\Im{\mathrm{Im}\hspace*{.5mm}}
\def\GL{\mathrm{GL}}
\def\p{^}
\newcommand{\rom}[1]{(\romannumeral #1)}
\title{\LARGE Subdifferentiation with symmetry}
\begin{document}

\author{\large C\'edric Josz\thanks{(\url{cj2638@columbia.edu}), IEOR, Columbia University, New York..}}

\date{}

\maketitle

\begin{center}
    \textbf{Abstract}
    \end{center}
    \vspace*{-3mm}
 \begin{adjustwidth}{0.2in}{0.2in}
 ~~~~ Given an objective function that is invariant under an action of a Lie group, we study how its subgradients relate to the orbits of the action. Our main finding is that they satisfy projection formulae analogous to those stemming from the Whitney and Verdier stratifications. If the function is definable in an o-minimal structure on the real field, then we also obtain an invariant variational stratification. On the application side, we derive a conservation law for subgradient dynamics under minimal assumptions. It can be used to detect instability in discrete subgradient dynamics. 
\end{adjustwidth} 
\vspace*{3mm}
\noindent{\bf Keywords:} Lie groups, semi-algebraic geometry, stratification, variational analysis.
\vspace*{3mm}

\noindent{\bf MSC 2020:} 14P10, 49-XX, 53-XX.
% 14P10 Semialgebraic set and related spaces
% 34A60 Differential inclusions 
% 49-XX Calculus of variations and optimal control, optimization
% 53-XX Differential geometry

\tableofcontents

\section{Introduction}
\label{sec:Introduction}
Symmetries are gaining significant interest in the machine learning community due to their prevalence in dictionary learning \cite{zhai2019understanding,zhai2020complete,qu2020geometric}, matrix factorization \cite{valavi2020revisiting,li2019symmetry,mishra2014fixed} and deep learning \cite{weiler2025equivariant,gluch2021noether,gens2014deep,godfrey2022symmetries,hu2019exploring}. The oral at NeurIPS last year by Marcotte \textit{et al.} \cite{marcotte2024abide} uses Lie bracket computations to determine the maximal number of independent conserved quantities. Their follow-up work \cite{marcotte2024keep} derives a conserved quantity for inertial dynamics in some neural networks. Zhao \textit{et al.} \cite{zhao2023symmetries} propose a conservation law for gradient dynamics and explore its relationship with flat minima. More recently still, Li \textit{et al.} \cite{ziyin2024implicit} use symmetries to analyze noisy gradient dynamics. On a different note, symmetry teleportation \cite{armenta2023neural,zhao2022symmetry,zhao2023improving} incorporates a teleportation step between two iterations in gradient descent. It consists in finding a point on the orbit of the current iterate with maximal gradient norm.

In contrast, symmetries have received little attention from the continuous optimization community, aside from convex relaxations \cite{riener-2013,wang2025real,josz2018lasserre,mcrae2024benign}.
%and Morse-Bott functions \cite{rebjock2024fast,rebjock2024fast2}.  
We propose to initiate this study from the perspective of variational analysis. Consider an extended real-valued function $f:\R^n\to \R\cup\{\infty\}$ that is invariant under an action $\theta:~G\times \R^n\to\R^n$ of a Lie group $G$ on $\R^n$, namely 
$$  \forall (g,x)\in G\times \R^n,\quad f(\theta(g,x))=f(x).$$
If $f$ is differentiable, then by the chain rule from differential geometry \cite[Proposition 3.6(b)]{lee2012smooth}, one has $$\forall (v,x)\in T_eG\times \R^n,\quad \langle \nabla f(x), d(\theta^{(x)}
)_e(v)\rangle=0,$$  where $\theta^{(x)}:~ G\ni g\mapsto \theta(g,x)\in \R^n$ and $d(\theta^{(x)})_e:T_eG\rightarrow \R^n$ is the differential of $\theta^{(x)}$ at the identity $e$ of $G$. The image of this differential is none other than the tangent space $T_xGx$ of the orbit $Gx=\theta^{(x)}(G)$ at $x$. This yields the intuitive projection formula $$P_{T_xGx}\nabla f(x)=0.$$ 
It is a key ingredient for obtaining a conserved quantity in gradient dynamics \cite{zhao2023symmetries}. 

Our objectives in this work are as follows. First, we seek to extend this formula to nonsmooth setting, namely $$P_{T_xGx}\partial f(x)=\{0\},$$ where $\partial f$ is the subdifferential from variational analysis. This ``orbital projection formula'' will allow us to generalize the conservation law to subgradient dynamics. Surprisingly, the chain rules of subdifferential calculus \cite[Theorem 10.6, Exercise 10.7]{rockafellar2009variational} are of no use here. 

Second, we wish to show that the formula is not too sensitive to perturbations, ideally 
$$\forall v\in\partial f(x+\epsilon),\quad |P_{T_xGx}(v)|=O(|\epsilon|),$$ 
where the $|\cdot|=\sqrt{\langle \cdot,\cdot\rangle}$ is the Euclidean norm. This ``perturbed orbital projection formula'' has algorithmic implications, in particular for the stability of the subgradient method \cite{josz2024sufficient}, as will be discussed in \cref{sec:Discrete subgradient dynamics}.

Third, we would like to break down the domain of the function into finitely many pieces on which the function is smooth, each of which is invariant under the action, and satisfies a perturbed projection formula. Namely, if $X \subseteq \R^n $ is such a piece, it should be that $\theta(G,X) \subseteq X$ and for any $\overline x \in X$, we have
$$ \forall v\in\partial f(y),\quad |P_{T_xX}(v)|=O(|x-y|)$$
for $x \in X$ and $y \in \R^n$ near $\overline x$. The perturbed projection formula plays a key role in the avoidance of saddle points with the subgradient method via perturbations \cite{bianchi2023stochastic,davis2025active}. The invariant stratification could thus help analyze first-order methods for functions with symmetries. 

In order to achieve our aims, we rely on the slice theorem of Koszul \cite{koszul1965lectures} and Palais \cite{palais1961existence} and stratification of definable sets by Loi \cite{le1998verdier}. We build on various other results from variational analysis, o-minimal structures, and differential geometry. Due to the interdisciplinary nature of this work, an extended background section is included, and the proofs of several known facts are included.

As a reward for our efforts, we obtain a conserved quantity in subgradient dynamics $$\forae t>0,~x'(t)\in -\overline{\partial}f(x(t))$$ where $\forae$ means for almost every and $\overline{\partial}f$ is the Clarke subdifferential. This holds under minimal assumptions on the objective function $f$, provided that the action is linear. Unlike in previous works in machine learning \cite{marcotte2024abide,zhao2023symmetries,ziyin2024implicit}, the objective need not be smooth nor real-valued. This enables its application to important problems like ReLu neural networks and nonnegative matrix factorization. Also, we propose two closed-form expressions for the conserved quantity, one of which is
$$C(x) = P_{\mathrm{s}(\fg)}(xx\p T)$$
where $\mathrm{s}(\fg)$ denotes the symmetric elements of the Lie algebra $\fg$. We use this quantity to devise a sufficient condition for instability of the subgradient method with constant step size. By the way, most of the results extend to conservative fields, proposed by Bolte and Pauwels \cite{bolte2020conservative}.

This paper is organized as follows. \cref{sec:Background} provides background material on the different branches of mathematics used in later sections. The orbital projection formulae are obtained in \cref{{sec:orbit_proj}}. An invariant variational stratification is then derived in \cref{sec:invariant}. A conservation law is obtained for subgradient dynamics in \cref{sec:Conservation law} and several examples are given. Finally, the orbital projection formulae and the conserved quantity are used to establish instability of discrete subgradient dynamics in \cref{sec:Discrete subgradient dynamics}.

\section{Background}
\label{sec:Background}

We will borrow notions from variational analysis \cite{rockafellar2009variational}, %differential inclusions \cite{aubin1984differential}, 
o-minimal structures \cite{van1998tame}, and differential geometry \cite{lee2012smooth}. This section is intended for a broad audience. As usual,
\begin{gather*}
     \mathbb{N} = \{0,1,\hdots\}, ~~~ \mathbb{N}^* = \{1,2,\hdots\},~~~ \mathbb{R}_+ = [0,\infty), \\
    \mathbb{R}^* = \mathbb{R} \setminus \{0\}, ~~~ \R_+^* = (0,\infty),~~~\overline{\R} = \mathbb{R} \cup \{\infty\}.
\end{gather*}
% Given two integers $a \leq  b$, we use the notation 
% \begin{equation*}
%     \llbracket a,b \rrbracket = \{a,a+1,\hdots,b\}.    
% \end{equation*}
Also, $B_r(x)$ denotes the closed ball of radius $r$ centered at $x$. We consider neighborhoods to be open sets, following Lee \cite{lee2012smooth}. Finally, the sign of a real number $t$ is defined by 
 \begin{equation*}
     \mathrm{sign}(t) = 
     \left\{
     \begin{array}{cl}
          t/|t| & \text{if} ~ t\neq 0, \\
          \left[-1,1\right] & \text{else.} 
     \end{array}
     \right.
\end{equation*}
Let $\|\cdot\|_F$ and $\|\cdot\|_1$ respectively denote the Frobenius norm and the entrywise $\ell_1$-norm.

\subsection{Variational analysis}
\label{subsec:Variational analysis}
Variational analysis is concerned with the study of extrema and generalized differentiation, for which an indisputable reference is the book of Rockafellar and Wets \cite{rockafellar2009variational}. Below, we expose the notions used in this manuscript.

%is proper if $f(x) > -\infty$ for all $x \in \mathbb{R}^n$ and
The effective domain and the graph of an extended real-valued function $f:\mathbb{R}^n\rightarrow\overline{\mathbb{R}}$ are respectively defined by
\begin{gather*}
    \dom f = \{ x\in \mathbb{R}^n : f(x) < \infty\}, \\
    \mathrm{gph}f=\{ (x,t)\in \mathbb{R}^n\times \R : f(x)=t\}.
\end{gather*}
%is nonempty.

A prime example of an extended real-valued function is the indicator of a set $S \subseteq \mathbb{R}^n$, defined by 
\begin{equation*}
    \delta_S(x) = \left\{
    \begin{array}{cl}
        0 & \text{if} ~ x \in S, \\
        \infty & \text{if} ~ x \notin S.
    \end{array}
    \right.
\end{equation*}

A function $f:\mathbb{R}^n\rightarrow\overline{\mathbb{R}}$ is lower semicontinuous at $\overline{x} \in \dom f$ if $\liminf_{x\rightarrow \overline{x}} f(x) \geq   f(\overline{x})$ \cite[Definition 1.5]{rockafellar2009variational}. It is lower semicontinuous if it is so at every point in its domain. This is equivalent to requiring that the epigraph
\begin{equation*}
\mathrm{epi}f = \{ (x,t) \in \mathbb{R}^n \times \mathbb{R} : f(x) \leq  t \}
\end{equation*}
is closed \cite[Theorem 1.6]{rockafellar2009variational}. 

For a set-valued mapping $F:\mathbb{R}^n \rightrightarrows \mathbb{R}^m$, the domain and the graph are respectively defined by
\begin{gather*}
      \dom F=\{x\in \R^n:~F(x)\neq \emptyset \} ,\\
      \mathrm{gph}F = \{ (x,y)\in \mathbb{R}^n\times \R^m : F(x)\ni y \}.
\end{gather*}
The preimage is given by $F^{-1}(y) = \{ x\in \mathbb{R}^n : F(x) \ni y \}$. The graph of a function $F:A\to B$ where $A\subseteq \R^n$ and $B\subseteq \R^m$ is defined by $\mathrm{gph}(\widetilde F)$, where $\widetilde F:\R^n\rightrightarrows \R^m$ is such that $\widetilde F(x)=\{F(x)\}$ for $x\in A$ and $\widetilde F(x)=\emptyset$ for $x\notin A$. A set-valued mapping $F$ is said to be locally bounded if for all $x\in \dom  F$, there exists a neighborhood $U$ of $x$ in $\R\p n$ such that $F(U)$ is bounded.

Given $C\subseteq \R^n$, the horizon cone \cite[Definition 3.3]{rockafellar2009variational} is defined by
\[     C^\infty=\{v\in \R^n:~\exists t_k\searrow 0,~x_k\in C,~~t_kx_k\to v      \}      \]
if $C\neq\emptyset$ and $C^\infty= \{0\}$ otherwise.
Given $C \subseteq \mathbb{R}^n$ and $\overline{x} \in C$, the tangent cone \cite[Definition 6.1]{rockafellar2009variational} is defined by
\begin{gather*}
     T_C(\overline x)=\{v\in \R^n:~\exists  x_k\xrightarrow[C]{} \overline{x}~,~\tau_k\searrow 0: (x^k-\overline x)/\tau^k\to v\}
\end{gather*}
where $x_k \xrightarrow[C]{} \overline{x}$ is a shorthand for $x_k \rightarrow \overline{x}$ and $x_k \in C$.
The regular normal cone, normal cone \cite[Definition 6.3]{rockafellar2009variational}, and convexified normal cone \cite[6(19) p. 225]{rockafellar2009variational} are respectively defined by
\begin{gather*}
    \widehat{N}_C (\overline{x}) = \{ v \in \mathbb{R}^n : \langle v , x - \overline{x}\rangle \leq  o(|x-\overline{x}|)~\text{for}~x\in C~\text{near}~\overline{x} \}, \\ 
    N_C(\overline{x}) = \{ v \in \mathbb{R}^n : \exists x_k \xrightarrow[C]{} \overline{x}~\text{and}~ \exists v_k \rightarrow v~\text{with}~v_k \in \widehat{N}_C (x_k) \}, \\
    \overline{N}_C (\overline{x}) = \overline{\mathrm{co}} N_C (\overline{x}),
\end{gather*}
where ${\mathrm{co}}$ denotes the convex hull, and $\overline{\mathrm{co}}$ its closure. Explicitly, the $o$ means that 
\begin{equation*}
    \limsup_{\scriptsize \begin{array}{c}x \xrightarrow[C]{} \overline{x}\\ x\neq \overline{x}\end{array}} \frac{\langle v , x - \overline{x}\rangle}{|x-\overline{x}|} \leq  0.
\end{equation*}
The orthogonal and polar set of $S\subseteq \R^n$ are respectively defined by 
\begin{align*}
    S^\perp =\{v\in \R^n:~\forall w\in S,~\langle v,w\rangle = 0\}, \\
    S^* =\{v\in \R^n:~\forall w\in S,~\langle v,w\rangle\leq  0\}.
\end{align*}
By \cite[Theorem 6.28]{rockafellar2009variational}, the relationship $ \widehat{N}_C (\overline{x})=T_C(\overline{x})^*$ holds. A set $C \subseteq \mathbb{R}^n$ is regular \cite[Definition 6.4]{rockafellar2009variational} at one of its points $\overline{x}$ if it is locally closed and $\widehat{N}_C(\overline{x}) = N_C(\overline{x})$.

Given $f:\mathbb{R}^n\rightarrow \overline{\mathbb{R}}$ and a point $\overline{x}\in\mathbb{R}^n$ where $f(\overline{x})$ is finite, the regular subdifferential, subdifferential, horizon subdifferential \cite[Definition 8.3]{rockafellar2009variational}, and Clarke subdifferential of $f$ at $\overline{x}$ \cite[Definition 4.1]{drusvyatskiy2015curves} are respectively given by
\begin{gather*}
    \widehat{\partial} f (\overline{x}) = \{ v \in \mathbb{R}^n : f(x) \geq   f(\overline{x}) + \langle v , x - \overline{x} \rangle + o(|x-\overline{x}|) ~\text{near}~ \overline{x} \}, \\
    \partial f(\overline{x}) = \{ v \in \mathbb{R}^n : \exists (x_k,v_k)\in \gph\hspace*{.5mm}\widehat{\partial} f: (x_k, f(x_k), v_k)\rightarrow(\overline{x}, f(\overline{x}), v) \}, \\[1mm]
    \partial^\infty f(\overline{x}) = \{ v \in \mathbb{R}^n : \exists (x_k,v_k)\in \gph\hspace*{.5mm}\widehat{\partial} f: \exists \tau_k \searrow 0: (x_k, f(x_k), \tau_kv_k)\rightarrow(\overline{x}, f(\overline{x}), v) \}, \\[2mm]
    \overline{\partial} f(\overline{x}) = \overline{\mathrm{co}} [\partial f(\overline{x}) + \partial^\infty f(\overline{x})].
\end{gather*}
If $f(\overline{x})$ is not finite, then the subdifferentials are empty. The $o$ means that
\begin{equation*}
    \liminf_{\scriptsize \begin{array}{c}x \rightarrow \overline{x} \\ x\neq \overline{x}\end{array}} \frac{f(x) - f(\overline{x}) - \langle v , x - \overline{x} \rangle}{|x-\overline{x}|} \geq   0.
\end{equation*}

A function $f:\mathbb{R}^n \rightarrow \overline{\mathbb{R}}$ is regular \cite[Definition 7.25]{rockafellar2009variational} at $\overline{x}$ if $f(\overline{x})$ is finite and $\mathrm{epi}f$ is regular at $(\overline{x},f(\overline{x}))$ as a subset of $\mathbb{R}^{n+1}$. The normal cones and subdifferentials are related \cite[Theorem 9.10]{rockafellar2009variational} at any point $\overline{x}$ where $f$ is finite:
\begin{align*}
    \widehat{\partial} f (\overline{x}) &= \{ v \in \mathbb{R}^n : (v,-1) \in \widehat{N}_{\mathrm{epi}f} (\overline{x},f(\overline{x})) \}, \\
    \partial f (\overline{x}) & = \{ v \in \mathbb{R}^n : (v,-1) \in N_{\mathrm{epi}f} (\overline{x},f(\overline{x})) \}, \\
    \partial^\infty f (\overline{x}) & \subseteq \{ v \in \mathbb{R}^n : (v,0) \in N_{\mathrm{epi}f} (\overline{x},f(\overline{x})) \}, \\
    \overline{\partial} f (\overline{x}) & \supseteq \{ v \in \mathbb{R}^n : (v,-1) \in \overline{N}_{\mathrm{epi}f} (\overline{x},f(\overline{x})) \},
\end{align*}
where equality holds in the inclusions if $f$ is locally lower semicontinuous at $\overline{x}$. 

Following Bolte and Pauwels \cite{bolte2020conservative} and the extension of Josz \textit{et al.} \cite{josz2024prox}, given $f:\mathbb{R}^n\rightarrow \overline{\mathbb{R}}$ that is locally Lipshitz continuous on its domain, a set-valued mapping $D:\mathbb{R}^n \rightrightarrows \mathbb{R}^n$ is a conservative field for $f$ if it has closed graph, $\dom  D \subseteq \dom  f$, and for any absolutely continuous function $x:[0,1]\rightarrow\dom  D$, we have $$ \forae t\in (0,1),~\forall v\in D(x(t)),\quad (f\circ x)'(t) = \langle v , x'(t)\rangle.$$
 
A function $f:\mathbb{R}^n\rightarrow\overline{\mathbb{R}}$ is Lipschitz continuous near $\overline{x} \in \mathbb{R}^n$ if there exists $L>0$ such that $|f(x)-f(y)|\leq  L|x-y|$ for $x,y\in \dom  f$ near $\overline{x}$. It is locally Lipschitz on $X\subseteq \mathbb{R}^n$ if it is so at every point of $X$. If $f$ is Lipschitz continuous near $\overline{x}$, then let
\begin{equation*}
    \overline{\nabla} f(\overline{x}) = \{ v \in \mathbb{R}^n : \exists x_k \xrightarrow[D]{} \overline{x} ~\text{with}~ \nabla f(x_k) \rightarrow v\}
\end{equation*}
where $D$ are the differentiable points of $f$. Finally, given a point $x \in \mathbb{R}^n$, we use 
\begin{equation*}
    d(x,S) = \inf \{|x-y|: y \in S\}
~~~\text{and}~~~
    P_S(x) = \arg\min \{ |x-y| : y \in S\}
\end{equation*}
to denote the distance to and projection on a subset $S\subseteq \mathbb{R}^n$ respectively.  

\subsection{O-minimal structures}
\label{subsec:O-minimal structures}

O-minimal structures (short for order-minimal) were originally considered by van den Dries, Pillay, and Steinhorn \cite{van1984remarks,pillay1986definable}. They are founded on the observation that many properties of semi-algebraic sets can be deduced from a few simple axioms \cite{van1998tame}. Recall that a subset $A$ of $\mathbb{R}^n$ is semi-algebraic \cite{bochnak2013real} if it is a finite union of basic semi-algebraic sets, which are of the form 
    \begin{equation*}
        \{ x \in \mathbb{R}^n : f_1(x) > 0 , \hdots , f_p(x) > 0, f_{p+1}(x) = 0, \hdots , f_q(x) = 0\}
    \end{equation*}
    where $f_1,\hdots,f_q$ are polynomials with real coefficients. We adopt \cite[Definition p. 503-506]{van1996geometric} below.
    
\begin{definition}
\label{def:o-minimal}
An o-minimal structure on the real field is a sequence $S = (S_k)_{k \in \mathbb{N}}$ such that for all $k \in \mathbb{N}$:\vspace{-2mm}
\begin{enumerate}
    \item $S_k$ is a boolean algebra of subsets of $\mathbb{R}^k$, with $\mathbb{R}^k \in S_k$;\vspace{-2mm}
    \item $S_k$ contains the diagonal $\{(x_1,\hdots,x_k) \in \mathbb{R}^k : x_i = x_j\}$ for $1\leq  i<j \leq  k$;\vspace{-2mm}
    \item If $A\in S_k$, then $A\times \mathbb{R}$ and $\mathbb{R}\times A$ belong to $S_{k+1}$;\vspace{-2mm}
    \item If $A \in S_{k+1}$ and $\pi:\mathbb{R}^{k+1}\rightarrow\mathbb{R}^k$ is the projection onto the first $k$ coordinates, then $\pi(A) \in S_k$;\vspace{-2mm}
    \item $S_3$ contains the graphs of addition and multiplication;\vspace{-2mm}
    \item $S_1$ consists exactly of the finite unions of open intervals and singletons. 
\end{enumerate}
\end{definition}

A subset $A$ of $\mathbb{R}^n$ is definable in an o-minimal structure $(S_k)_{k\in\mathbb{N}}$ if $A \in S_k$ for some $k\in\mathbb N$. A subset of a Euclidean space $E$ of dimension $n$ is a definable if its image via isomorphism (i.e. an invertible linear map) from $E$ to $\mathbb{R}^n$ is definable in an o-minimal structure. Given two isomorphisms $F,G:E\to\R^n$, $GF^{-1}:\R^n\to \R^n$ is definable (in the classical sense). Thus, for any $S\subseteq E$, $F(S)$ is definable if and only if $G(S)=GF^{-1}(F(S))$ is definable.  
\begin{definition}
    A function $f:\mathbb{R}^n\rightarrow\overline{\mathbb{R}}$ is definable in an o-minimal structure if $\gph f$ is definable in that structure.
\end{definition}

Similarly, functions from $\R^n$ to $\R^m$ and set-valued mappings 
are definable if their graphs are. Examples of o-minimal structures include the real field with constants, whose definable sets are the semi-algebraic sets (by Tarski-Seidenberg \cite{tarski1951decision,seidenberg1954new}), the real field with restricted analytic functions, whose definable sets are the globally subanalytic sets (by Gabrielov \cite{gabrielov1968projections,van1986generalization}), the real field with the exponential function (by Wilkie \cite{wilkie1996model}), the real field with the exponential and restricted analytic functions (by van den Dries, Macintyre, and Marker \cite{van1994elementary}), the real field with restricted analytic and real power functions (by Miller \cite{miller1994expansions}), and the real field with convergent generalized power series (by van den Dries and Speissegger \cite{van1998real}). Note that there is no largest o-minimal structure \cite{rolin2003quasianalytic}. Throughout this paper, we fix an arbitrary o-minimal structure $(S_k)_{k\in\mathbb{N}}$.

A key property of univariate definable functions is that they satisfy the monotonicity theorem \cite[4.1]{van1996geometric}. It states that on bounded open intervals, for any $p\in \mathbb{N}$ there exist finitely many open subintervals where the function is $C^p$ and either constant or strictly monotone. This allows for a short proof of the \L{}ojasiewicz inequality \cite[Theorem 1.14]{pham2016genericity}. It asserts that if $f:\mathbb{R}^n\rightarrow\mathbb{R}$ is a continuous definable function with $f^{-1}(0) \neq \emptyset$ and $X \subseteq \mathbb{R}^n$ is a bounded set, then there exists an increasing definable diffeomorphism $\varphi:\mathbb{R}_+\rightarrow \mathbb{R}_+$ such that
\begin{equation*}
    \forall x \in X, ~~~ |f(x)| \geq \varphi(d(x,f^{-1}(0))).
\end{equation*}
By growth dichotomy, univariate definable functions are polynomially bounded if and only if the exponential function is not definable \cite{miller1994exponentiation}. In that case, one can then find $\theta>0$ such that $\varphi(t) \geq t^\theta$. We will draw inspiration from the proof of the \L{}ojasiewicz inequality when deriving a necessary condition for stability (see Theorem \ref{thm:unstable}). 

The extension of the monotonicity theorem to multivariate functions is the cell decomposition theorem \cite[4.2]{van1996geometric}, which plays a fundamental role. It implies that definable sets can be stratified in a particularly nice way, in that they can be broken up into finitely many smooth pieces that fit together nicely. This fact naturally transposes to definable functions. We will elaborate on this after a brief introduction to differential geometry.

\subsection{Differential geometry}
\label{subsec:Differential geometry}

Our starting point is a smooth manifold $M$, that is, a topological manifold equipped with a smooth structure. (By smooth, we mean $C^k$ for some fixed $k\in \{1,2,\hdots,\infty\}$). A topological manifold is a locally Euclidean (of constant dimension) second-countable Hausdorff topological space. In contrast to the branch of optimization dealing with optimization on smooth manifolds \cite{boumal2023introduction}, our variable will lie in a Euclidean space as usual.

The smooth structure enables one to define smooth maps between two manifolds $M,N$ as well as the tangent space $T_pM$ at a point $p \in M$. Tangent vectors $v\in T_pM$ are linear maps $v:C^k(M)\to \R$ such that $v(fg)=v(f)g+fv(g)$ for $f,g\in C^k(M)$. Tangent vectors can also be defined using an equivalence relation on the set of all smooth curves $\gamma:J\rightarrow M$ where $J$ is an interval of $\mathbb{R}$ containing $0$ and $\gamma(0) = p$ \cite[p. 71]{lee2012smooth}. Two such curves $\gamma_1:J_1\rightarrow M$ and $\gamma_2:J_2\rightarrow M$ are equivalent if $(f\circ \gamma_1)'(0) = (f\circ \gamma_2)'(0)$ for any smooth real-valued function defined in a neighborhood of $p$. The tangent space is then the set of equivalence classes.  

The differential of a smooth map $F:M\rightarrow N$ at $p\in M$ is the linear map $dF_p: T_pM \rightarrow T_{F(p)} N$ such that $dF_p(v)(f\circ F)=v(f)$ for all $v\in T_pM$ and $f\in C^k(N)$, The rank of $F$ at $p$ is the rank of $dF_p$, namely the dimension of the image of $dF_p$. A map $F:M\rightarrow N$ between two smooth manifolds $M,N$ is a smooth immersion (respectively submersion) if it is smooth and $dF_p$ is injective (respectively surjective) for all $p\in M$. It is a smooth embedding if it a smooth immersion and a topological embedding, i.e., a homeomorphism onto its image $F(M) \subseteq N$ in the subspace topology.  

The differential enables one to define notions of submanifolds, which arise prominently in the study of symmetries. An embedded submanifold of $M$ is a subset $S \subseteq M$ that is a manifold in the subspace topology, endowed with a smooth structure with respect to which the inclusion map $S \hookrightarrow M$ is a smooth embedding (the inclusion map is defined by $S \ni x\mapsto x\in M$). An immersed submanifold is a subset $S \subseteq M$ endowed with a topology (not necessarily the subspace topology) with respect to which it is a topological manifold, and a smooth structure with respect to which the inclusion map $S\hookrightarrow M$ is a smooth immersion. From the definition, one sees that embedded submanifolds are immersed manifolds, but the converse is false, as illustrated by the figure eight \cite[Example 4.19]{lee2012smooth}. Embedded submanifolds can be expressed locally as level sets of smooth submersions \cite[Proposition 5.16]{lee2012smooth}, which is how they are often defined in $\mathbb{R}^n$ \cite[Example 6.8]{rockafellar2009variational}.

Suppose $S$ is an immersed submanifold of a smooth manifold $M$. Since inclusion map $\iota:S\to M$ is a smooth immersion, its differential $d\iota_p:T_pS\to T_pM$ is injective for all $p\in S$. One may thus view $T_pS$ as a subspace of $T_pM$ via the identification $T_pS\cong d\iota_p(T_pS)$. The following characterization is helpful \cite[Proposition 5.35]{lee2012smooth}. A vector $v\in T_pM$ is in $T_pS$ if and only if there is a smooth curve $\gamma:J\to M$ whose image is contained in $S$, and which is also a smooth as a map into $S$, such that $0\in J$, $\gamma(0)=p$, and $\gamma'(0)=v$. When $M=\R^n$, $T_pM\cong \R^n$. Hence, $T_pS$ can simply be viewed as a subset of $\R^n$. In that case, one can define the normal space $N_pS=(T_pS)^\perp$. When $S$ is an embedded submanifold of $\R^n$, then the tangent space and the normal space agree with the tangent cone and the normal cones from variational analysis, respectively, namely $T_S(p)=T_pS$ and $\widehat{N}_S (p) = N_S (p) = N_{p} S$ \cite[Example 6.8]{rockafellar2009variational}.

\subsubsection{Actions}
\label{subsubsec:Actions}

Our second object of interest is a group $G$, that is, a set equipped with a binary operation that is associative, has an identity element $e$, and such that every element has an inverse element. It enables us to define an action (see for e.g., \cite[p. 161]{lee2012smooth}).

\begin{definition}
\label{def:action}
An action of a group $G$ on a set $M$ is a map $G \times M \rightarrow M$ such that
\begin{enumerate}[label=\rm{(\roman{*})}]
    \item $\forall g_1,g_2\in G, ~ \forall x\in M, ~ g_1 (g_2x) = (g_1g_2)  x$,
    \item $\forall x\in M, ~ ex = x$.
\end{enumerate}
% \begin{itemize}
%     \item[--] $\forall g_1,g_2\in G, ~ \forall x\in M, ~ g_1 (g_2x) = (g_1g_2)  x$,
%     \item[--] $\forall x\in M, ~ ex = x$.
% \end{itemize}
\end{definition}

We say that $G$ acts on $M$ if such an action exists, and that $G$ acts trivially on $M$ if $gx = x$ for all $(g,x)\in G \times M$. One also says that $M$ is a $G$-space to mean that $G$ acts on $M$, and that is it is a homogeneous $G$-space if the action is transitive. This mean that for all $x,y\in M$, there exists $g\in G$ such that $gx = y$. Actions of interest are often continuous and even smooth, which calls for combining the above notions. A topological group $G$ is a topological manifold and a group such that the binary operation and inversion are continuous. A Lie group $G$ is a smooth manifold and a group whose operations are smooth. In this work, we identify the Lie algebra $\fg$ with $T_eG$, following \cite[Theorem 8.37]{lee2012smooth}. An action of a topological (respectively Lie) group $G$ on a topological (resp. smooth) manifold $M$ is then continuous (resp. smooth) if the defining map $G \times M \rightarrow M$ is continuous (resp. smooth). In that case, we say that $G$ acts continuously (resp. smoothly) on $M$.

To go on, we need some more notions. A Lie group homomorphism is a smooth map between Lie groups that is also a group morphism\footnote{In fact, continuous group morphisms between Lie groups are smooth \cite[20-11 (b) p. 538]{lee2012smooth}. This implies that given a topology on $G$, there is only one smooth structure that makes $G$ into a Lie group.}.  A Lie subgroup of a Lie group $G$ is a subgroup of $G$ is endowed with a topology and smooth structure making into a Lie group and an immersed submanifold. A nice feature of this definition is that the image of a Lie group via a Lie group homomorphism is a Lie subgroup \cite[Proposition 7.17]{lee2012smooth}. A major result on Lie groups is the closed subgroup theorem \cite[Theorem 20.12]{lee2012smooth}, which asserts that every subgroup of a Lie group that is topologically closed is an embedded Lie subgroup. This becomes relevant when we consider linear actions, prevelant in optimization applications \cite{kunin2020neural,gluch2021noether,marcotte2024abide}, as follows (see \cite[p. 170]{lee2012smooth}).

\begin{definition}
\label{def:linear}
An action of a group $G$ on a vector space $V$ is linear if $V \ni x \mapsto gx\in V$ is linear for all $g \in G$. 
\end{definition}

Let $\mathrm{GL}(V)$ denote the invertible linear maps from $V$ to itself. A smooth action of a Lie group $G$ on a finite-dimensional vector space $V$ is linear if and only if it is of the form $(g,x) \mapsto \rho(g)x$ for some Lie group homomorphism $\rho: G \rightarrow \mathrm{GL}(V)$ \cite[Proposition 7.37]{lee2012smooth}, called a representation of $G$. At the same time, $\mathrm{GL}(V)$ is isomorphic to the general linear group $\mathrm{GL}(n,\mathbb{R})$, i.e., the set of invertible $n \times n$ matrices with real entries. Without loss of generality, we may thus always assume smooth linear actions to be in the form $ G \times \mathbb{R}^n \ni(g,x) \mapsto gx \in \mathbb{R}^n$ where $G$ is a Lie subgroup of $\mathrm{GL}(n,\mathbb{R})$ and $gx$ is the matrix vector multiplication. This is called the natural action of $G$ on $\R^n$ \cite[Example 7.22(b)]{lee2012smooth}. Actions in turn enable us to define invariance. 

\begin{definition}
\label{def:invariant}
A function $f:M\rightarrow N$ between sets $M$ and $N$ is invariant under an action of a group $G$ on $M$ if $f(gx) = f(x)$ for all $g\in G$ and $x\in M$.
\end{definition}

For brevity, we will sometimes say that $f$ is $G$-invariant to mean that $f$ is invariant under a smooth action of a Lie group $G$ on its domain (the starting set in the definition of a function). A related notion is as follows. It is useful for establishing that a map has constant rank.

\begin{definition}
\label{def:equivariant}
A function $f:M\rightarrow N$ between sets $M,N$ is equivariant with respect to an action of a group $G$ on $M$ and $N$ if $f(gx) = gf(x)$ for all $g\in G$ and $x\in M$.
\end{definition}

We next turn our attention to orbits.

\subsubsection{Orbits}
\label{subsubsec:Orbits}

Suppose $\theta: G \times M \rightarrow M$ is an action of a group $G$ on a set $M$. We introduce some standard vocabulary.
\begin{itemize}
    \item[--] The orbit of a point $x\in M$ is the set $Gx=\{ gx : g\in G\}$.
    \item[--]  The isotropy group of a point $x\in M$ is the set $G_x = \{g\in G : gx = x\}$.
    \item[--] The action is free if $G_x=\{e\}$ for all $x\in M$.
    \item[--] The action is free at $x \in M$ if $G_x = \{e\}$.
\end{itemize}
A map between topological spaces is proper if preimages of compact sets are compact. Suppose $G$ acts continuously on $M$.
\begin{itemize}
    \item[--] The action is proper if $G \times M \ni(g,x) \mapsto (gx,x)\in M \times M$ is proper.
    \item[--] The action is proper near $x\in M$ if there exists a neighborhood $U$ of $x$ such that $\{ g \in G : U \cap g U \neq \emptyset \}$ has compact closure.
\end{itemize}

The reason why we are interested in local properness is because the global condition is generally too strong: it implies a compact isotropy group at every point, which fails for natural actions of noncompact Lie subgroups of $\mathrm{GL}(n,\R)$ since $G_0 = G$. 

If an action is proper near $x \in M$, then it is proper near any point in its orbit. Indeed, if $y = hx$ for some $h \in G$, then $V = hU$ is a neighborhood of $y$ since $x\mapsto hx$ is a homeomorphism. Also, for all $g \in G$, $V \cap gV \neq \emptyset$ if and only if $U \cap h^{-1}gh U \neq \emptyset$, so that $h^{-1}gh$ has compact closure. One concludes by noting that $g \mapsto h^{-1}gh$ is a homeomorphism.

Suppose a Lie group $G$ acts smoothly on a manifold $M$. Fix a point $x \in M$ and consider the map $\theta^{(x)}: G \ni g  \mapsto gx \in M$. Passing to the quotient $\Theta^{(x)}: G/G_x \rightarrow M$ by sending the equivalence class $gG_x$ to $gx$ yields an injective smooth immersion \cite[Theorem 7.25]{lee2012smooth} (which is thus diffeomorphic onto its image \cite[Proposition 5.18]{lee2012smooth}; in particular $\Im d(\theta^{(x)})_e = T_xGx$).  Hence the orbit $Gx$ is an immersed submanifold of $M$. Since proper injective smooth immersions are smooth embeddings \cite[Proposition, 4.22]{lee2012smooth}, $Gx$ becomes an embedded submanifold of $M$ when $\Theta^{(x)}$ is proper, and in particular, when $\theta^{(x)}$ is proper (itself true when $\theta$ is proper \cite[Proposition 21.7]{lee2012smooth}). 
For example, consider the natural action on $\R^4$ of the Lie group
$$  G=\left\{\begin{pmatrix}
    R_\theta & 0\\
    0 & R_{a\theta} 
\end{pmatrix} :~\theta \in \R\right\}~~\text{where }R_{\theta}=\begin{pmatrix}
    \cos\theta & -\sin \theta \\
    \sin\theta & \hphantom{ - }  \cos\theta
\end{pmatrix} \text{ and } a\in \R. $$
The orbits of this action generated by points with nonzero entries (at which it is free) are embedded if $a\in\mathbb Q$ and are merely immersed if $a\in \R\setminus\mathbb Q$ \cite[Example 4.20]{lee2012smooth}. 
This is particularly relevant in optimization since the projection onto a $C^k$ embedded submanifold $M$ of $\mathbb{R}^n$ with $k\in \{2,\hdots,\infty,\omega\}$ is single-valued and $C^{k-1}$ on a neighborhood of $M$ ($\omega$ stands for real analytic, $\infty = \infty -1$, and $\omega = \omega -1$) \cite[Theorem 3.2, 3.6, Theorem 3.8, Theorem 4.1]{dudek1994nonlinear}. It thus inherits favorable properties of the projection on closed convex sets. 

\subsubsection{Slices}
\label{subsubsec:Slices}

So far, we have discussed the structure of a single orbit, but we would like to know more. What is the structure of orbits passing near a fixed point? This is crucial for analyzing the subdifferential since it is defined by taking limits of nearby points. To this end, suppose $G$ acts continuously on $M$. Given $A,B \subseteq M$, let 
$$ G(A|B) = \{ g \in G : A \cap gB \neq \emptyset \}$$
following Koszul \cite{koszul1965lectures}.
Note that $G(\{x\}|\{x\}) = G_x$. A slice is a set $A \subseteq M$ such that $G(A|A)A=A$ and the restriction of the action $G\times A \to M$ is open\footnote{A map between topological spaces is open is it maps open sets to open sets.} \cite[Definition p. 12]{koszul1965lectures}.  A slice at a point $x \in M$ is a slice such that $x\in A$ and $G(A|A) = G_x$. A normal slice is a slice $A$ such that $G(A|A) = G_y$ for all $y \in A$. 

If $A$ is a normal slice at $x$, then $G/G_x \times A \to GA$ is a homeomorphism, as explained in the next paragraph. This means that one can view the neighborhood $GA$ of $Gx$ as simply a Cartesian product, with the set $A$ ``slicing'' through $GA$. 

If $ga = hb$ with $g,h\in G$ and $a,b\in A$, then $a = (g\p{-1} h) b$, $g\p{-1} h \in G(A|A) = G_b = G_x$, and $(g\p{-1} h) b = b$. In other words, $g \in h G_x$ and $a=b$. It is thus natural to consider the quotient maps\footnote{Let $X$ be a topological space, $Y$ be any set, and $q:X\to Y$ be a surjective map. The quotient topology on $Y$ is defined by declaring $U$ to be open in $Y$ if and only if $q\p{-1}(U)$ is open in $X$. Given two topological spaces $X$ and $Y$, a quotient map is a surjective map $q:X\to Y$ where $Y$ is endowed with the quotient topology induced by $q$.} $\pi: G\to G/G_x$ and $(q,\mathrm{Id}_A)$. One may pass continuously to the quotient $G/G_x \times A \to M$ \cite[Theorem 3.73]{lee2012smooth} while remaining an open map and becoming injective. Restricting the codomain to the image, i.e., $G/G_x \times A \to GA$, retains continuity and openness, using the subspace topology in the codomain. It thus is a homeomorphism.  

It will be convenient to strengthen Koszul's definitions. Suppose $G$ acts smoothly on $M$. A smooth slice is an embedded submanifold $A \subseteq M$ such that $G(A|A)A=A$ and the restriction of the action $G\times A \to M$ is a smooth submersion (thus an open map \cite[Proposition 4.28]{lee2012smooth}). A smooth slice at a point $x \in M$ is a smooth slice such that $x\in A$ and $G(A|A) = G_x$. A smooth normal slice is a smooth slice $A$ such that $G(A|A) = G_y$ for all $y \in A$.

If $A$ is a smooth normal slice at $x$, then $G/G_x \times A \to GA$ is a diffeomorphism. Indeed, $\pi : G\to G/G_x$ is a surjective smooth submersion by the equivariant rank theorem \cite[Theorem 7.25]{lee2012smooth} and so is $(\pi,\mathrm{Id}_A)$. One may thus pass smoothly to the quotient $G/G_x\times A \to M$ \cite[Theorem 4.30]{lee2012smooth} while remaining a smooth submersion by the chain rule \cite[Proposition 3.6]{lee2012smooth}. The new map is naturally injective, and since it has constant rank, it is a smooth immersion by the global rank theorem \cite[Theorem 4.14]{lee2012smooth}. But we already know it is a homeomorphism onto its image (shown above), so it is a smooth embedding. By \cite[Propositon 5.2]{lee2012smooth}, $G/G_x\times A \to GA$ is a diffeomorphism using the subspace topology in the codomain. In order to understand when a smooth normal slice should exist, we need to introduce two notions: the slice representation and the type of an orbit.

If $G$ acts smoothly on $M$ and $x \in M$, then $G_x$ acts smoothly on $T_xM$. In order to see this, it is convenient to name the action $\theta : G \times M \rightarrow M$ and the action of a single element $\theta_g$. Each element $g\in G_x$ fixes the map $\theta_g:M\rightarrow M$ and taking the derivative at $g$ gives the map $d\theta_g:T_xM\rightarrow T_xM$. By the chain rule, $d(\theta_{gh})_x = d(\theta_g \circ \theta_h)_x = d(\theta_g)_x \circ d(\theta_h)_x$ and thus there is a representation $\rho_x: G_x \rightarrow \mathrm{GL}(T_xM)$ given by $\rho_x(g) = d(\theta_g)_x$. Since $T_xGx$ is stable under $\rho_x(g)$, one actually obtains a slice representation $\sigma_x : G_x \rightarrow \mathrm{GL}(T_xM/T_xGx)$. When $M = \mathbb{R}^n$, one may identify $T_xM/T_xGx$ with the normal space $N_x Gx$. By definition, $G_x$ acts trivially on $T_xM/T_xGx$ if $\sigma_x(g) = e$ for all $g\in G_x$. 

Two subgroups $H,H'$ of a group $G$ are conjugate if $H=gH'g^{-1}$ for some $g\in G$. The set of subgroups of $G$ conjugate to a given subgroup $H$ is called the conjugacy class of $H$ in $G$ \cite[p. 403]{lee2010introduction}. The type of an orbit $Gx$ is the conjugacy class $\tau(x)$ of the isotropy group $G_x$ in $G$. This is motivated by the fact that the isotropy groups of two points on an orbit are conjugate to one another, as evidenced by the relation $G_{gx} = g^{-1}G_xg$ for all $g\in G$. 

%To each isotropy group $H$ one may thus attribute a subset of $\mathbb{R}^n$ defined by the points $x \in \mathbb{R}^n$ for which $G_x$ is conjugate to $H$. These subsets form the strata in the stratification by orbit types \cite[4.3.5]{pflaum2001analytic}, which is actually a Verdier stratification \cite{giacomoni2014stratification}. This stratification is however not suitable for our purposes. Indeed, we are interested in points where the conjugacy class of the isotropy group is locally constant, which would be an interior point of a stratum. We are also interested in tangent spaces to orbits not to strata. This should become clear in what follows.

Suppose $G$ acts smoothly on $M$ and properly near $x\in M$. The slice theorem \cite[Lemma 4]{koszul1965lectures} \cite[4.2.6]{pflaum2001analytic} \cite{koszul1953certains} \cite[Theorem 2.3.2]{palais1961existence} asserts that there exists a smooth slice at $x$ \cite[Theorem 1 p. 17]{koszul1953certains} and 
that the following are equivalent:
\begin{enumerate}[label=\rm{(\roman{*})}]
    \item there exists a smooth normal slice at $x$; \label{item:normal_slice}
    \item $G_x$ acts trivially on $T_xM/T_xGx$; \label{item:trivial}
    \item the orbit type $\tau$ is constant near $x$. \label{item:type}
\end{enumerate}
\ref{item:normal_slice} $\Longrightarrow$ \ref{item:trivial} is due to \cite[Theorem 2 p. 17]{koszul1953certains}. \ref{item:normal_slice} $\Longleftarrow$ \ref{item:trivial} is due to \cite[Proposition 5.2]{lee2012smooth} and \cite[Lemma 3, Remark, Lemma 4 p. 15, Theorem 1, Theorem 2 p. 17]{koszul1953certains}. Finally, \ref{item:trivial} $\Longleftrightarrow$ \ref{item:type} holds by \cite[Lemma 3 p. 15]{koszul1965lectures}. In light of the above equivalences, we propose the following definition.

\begin{definition}
A smooth action of a Lie group $G$ on a smooth manifold $M$ is typical (or acts typically) at $x\in M$ if it is proper near $x$ and the orbit type $\tau$ is constant near $x$.
\end{definition}

%When $G$ acts properly on $M$ near $x$, this is equivalent to the fact that the orbit type $\tau$ is constant in a neighborhood of $x$ in $M$ \cite[Lemma 3 p. 14, Theorem 2 p. 17]{koszul1965lectures}. 

Proper actions, in the global sense, actually induce a global orbit structure, namely, a stratification by orbit types \cite[Theorem 4.3.7]{pflaum2001analytic}. This calls for introducing the subject, following \cite{mather1970notes,trotman2020stratification}.

\subsection{Stratification}
\label{subsec:Stratification}

The distance \cite[(2.1) p. 197]{kato2013perturbation} between two linear subspaces $V,W$ of $\mathbb{R}^n$ is given by 
\begin{equation*}
    d(V,W) = \sup \{ d(v,W) : v \in V, |v|=1\}
\end{equation*}
and $d(V,W) = 0$ if $V = \{0\}$. It satisfies two important properties: 1) $d(V\p \perp, W\p \perp) = d(W,V)$ by \cite[Theorem 2.9]{kato2013perturbation}; 2) if $\mathrm{dim}V = \mathrm{dim}W$, then $d(V,W) = |P_V - P_W|$ by \cite[Lemma 3.2]{morris2010rapidly}. The first is useful when dealing with normal cones. Thes second shows that the distance defines a metric on the Grassmannian $G_k(\mathbb{R}^n)$, namely, the set of $k$-dimensional linear subspaces of $\mathbb{R}^n$. % which by the way implies that $d(V^\perp,W^\perp) = d(V,W)$. 

Let $k$ be a positive integer. A $C^k$ stratification of a subset $S$ of $\mathbb{R}^n$ is a locally finite partition $\mathcal{X}$ of $S$ such that: 
\begin{enumerate}[label=\rm{(\roman{*})}]
    \item Each element $X \in \mathcal{X}$, called stratum, is a $C^k$ embedded submanifold of $\mathbb{R}^n$;
    \item For all $X,Y \in \mathcal{X}$, if $X \cap \overline{Y} \neq \emptyset$ then $X \subseteq \overline{Y}$. 
\end{enumerate}
Since $\mathcal{X}$ is a partition, if $X,Y \in \mathcal{X}$ and $X \neq Y$, then $X\cap Y = \emptyset$. If in addition $X \cap \overline{Y} \neq \emptyset$, then (ii) implies that $X \subseteq \overline{Y} \setminus Y$. A stratification of $\mathbb{R}^{n+1}$ is nonvertical if $(0,\hdots,0,1) \in \mathbb{R}^{n+1}$ is not tangent to any stratum at any point.

A pair of submanifolds $(X,Y)$ of $\mathbb{R}^n$ fulfills the Whitney-(a) condition at $\overline{x} \in X$ if for any $\mathrm{dim}Y$-dimensional linear subspace $\tau \subseteq \mathbb{R}^n$ and any sequence $y_k\in Y \rightarrow \overline{x}$, we have 
$$d(\tau,T_{y_k}Y) \rightarrow 0\quad \implies \quad T_{\overline{x}}X \subseteq \tau.$$ It satisfies the Verdier condition at $\overline{x} \in X$ if 
$$d(T_xX,T_yY) = O(|x-y|)$$ 
for $x\in X$ and $y\in Y$ near $\overline{x}$. Accordingly, a Whitney-(a) (respectively Verdier) stratification is one in which every pair of strata $(X,Y)$ such that $X \subseteq \overline{Y}\setminus Y$ satisfies the Whitney-(a) (respectively Verdier) condition. A prime example of Verdier stratification is given by the determinental variety
$$\R\p{m \times n}_{\leq r} = \{ X \in \R\p{m \times n} : \rank X \leq r\}$$
where the subsets of fixed rank matrices
$$\R\p{m \times n}_{k} = \{ X \in \R\p{m \times n} : \rank X = k\}$$ with $k\in \{0,\hdots,r\}$ form strata \cite[Proposition 7]{ding2014introduction} \cite[Section 4]{hosseini2019gradient}. They are actually orbits of the smooth action of $\GL(m,\R)\times \GL(n,\R)$ on $\R\p{m \times n}$ defined by $(A,B)X = AXB$.

The Whitney condition was introduced to optimization by Bolte \textit{et al.} \cite{bolte2007clarke} to obtain nonsmooth versions of the Morse-Sard theorem and the Kurdyka-\L{}ojasiewicz inequality. When applied to a lower semicontinuous function $f:\mathbb{R}^n\rightarrow \overline{\mathbb{R}}$, it yields the \textit{\textbf{projection formula}} \cite[Proposition 4]{bolte2007clarke}
\begin{equation*}
    P_{T_{\overline{x}}X} \partial f(\overline{x}) \subseteq \{\nabla_X f(\overline{x})\} ~~~ \text{and}~~~ P_{T_{\overline{x}}X} \partial f^\infty(\overline{x})= \{0\}
\end{equation*}
where $X$ is the stratum containing any $\overline{x}\in \mathbb{R}^n$ in the stratification of the domain obtained by projecting a nonvertical stratification of the graph onto it. For any smooth submanifold $X\subseteq \mathbb{R}^n$, the covariant gradient is defined by $\nabla_X f(\overline{x})=P_{T_{\overline{x}}X}\nabla \bar{f}(\overline{x})$ where $\bar{f}$ is any $C^1$ smooth function defined on a neighborhood $U$ of $\overline{x}$ in $\mathbb{R}^n$ and that agrees with $f$ on $U \cap X$. 

The stronger Verdier condition was introduced to optimization by Bianchi \textit{et al.} \cite{bianchi2023stochastic} and Davis \textit{et al.} \cite{davis2025active} to show that a perturbed subgradient method with diminishing step size does not converge to active strict saddle points almost surely. When applied to the graph of a function $f:\mathbb{R}^n\rightarrow \overline{\mathbb{R}}$ that is locally Lipschitz continuous on its domain, it yields the \textit{\textbf{perturbed projection formula}} \cite[Theorem 3.6]{davis2025active}
\begin{gather*}
  \forall v\in\partial f(y),\quad  |\nabla_X f(x)-P_{T_xX}(v)| = O(\sqrt{1+|v|^2}~|x-y|) \\ \text{and} \\  \forall w\in\partial^\infty f(y),\quad |P_{T_xX}(w)| = O(|w||x-y|)
\end{gather*}
for $x \in X$, $y \in \dom \hspace*{.5mm}\partial f$ near $\overline{x}$. This naturally leads to the following definition.

\begin{definition}
\label{defn_vari_strat}
     A $C^k$ variational stratification of a function $f:\R^n\to \overline{\R}$ is a $C^k$ Verdier stratification of $\dom  f$ with finitely many strata such that $f$ is $C^k$ on each stratum and the perturbed projection formula holds at all $\overline{x}\in \dom  f$. 
\end{definition}
As shown by Loi \cite[Theorem 1.3]{le1998verdier}, given a finite family of definable sets, there exists a Verdier stratification of $\mathbb{R}^n$ compatible with each set, meaning that each one is a union of strata. Also, there are finitely many strata and each one is definable. Given a definable function $f:\mathbb{R}^n\rightarrow \overline{\mathbb{R}}$ that is continuous on its domain and a definable set $X \subseteq \dom f$, there thus exists a Verdier stratification of the graph and hence the domain such that $X$ is a finite union of strata \cite[Theorem 3.29]{davis2025active}. Since $X$ may not be contained in any strata, the projection formulae might not hold \cite[Example 2.8]{josz2024sufficient}. Fortunately, by tilting $f$ by a linear function, almost surely around each saddle point $\overline{x}\in \mathbb{R}^n$ there exists a submanifold $X$ containing $\overline{x}$ such that the perturbed projection formula holds \cite[Theorem 5.2]{drusvyatskiy2016generic} \cite[Theorem 2.9]{davis2022proximal} \cite[Theorem 3.31]{davis2025active}. This holds if $f$ is lower semicontinuous and weakly convex (in addition to being definable). It can then be shown that the projection of the iterates on $X$ of a pertubed subgradient method correspond to an inexact Riemannian gradient method with an implicit retraction. 

While this technique enables proving nonconvergence to saddle points, it is not suitable for proving instability of discrete subgradient dynamics around nonstrict local minima. The author and his coauthor \cite[Theorem 2.9]{josz2024sufficient} instead devised a new proof scheme based on the existence of Chetaev function near a nonstrict local minimum $\overline{x}$. For it to work, they assume that the set of local minima near $\overline{x}$ forms a $C^2$ embedded submanifold $X$ and that the perturbed projection formula holds. In \cref{sec:Discrete subgradient dynamics}, we show how these conditions generally hold by symmetry. 

This brings us back to orbits. Recall that the type of an orbit $Gx$ is the conjugacy class $\tau(x)$ of the isotropy group $G_x$ in $G$. To each isotropy group $H$ one may thus attribute a subset of $\mathbb{R}^n$ defined by the points $x \in \mathbb{R}^n$ for which $G_x$ is conjugate to $H$. These subsets form the strata in the stratification by orbit types \cite[4.3.5]{pflaum2001analytic}, which is actually a Verdier stratification \cite{giacomoni2014stratification}. This stratification is however not suitable for our purposes. Indeed, we are interested in points where the conjugacy class of the isotropy group is locally constant, which would be an interior point of a stratum. We are also interested in tangent spaces to orbits not to strata. This should become clear in what follows.

\subsection{Useful facts}
\label{subsec:Useful facts}

We finish the background section with some known facts that will be used later. Their proofs are included for completeness and can be found in the Appendix. The first one provides a criterion for an orbit to be embedded, complementing the standard properness assumption. 

\begin{fact}{\cite[Appendix B]{gibson1979singular}}
    \label{fact:embedded_orbit}
     If the orbit of $C^k$ action on $\R^n$ is definable with $k\in \mathbb N^*$, then it is $C^k$ embedded.
\end{fact}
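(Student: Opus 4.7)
The plan is to combine the $C^k$ cell decomposition of the definable orbit with the transitivity of the action on it. Writing $S = Gx$, I would first apply the cell decomposition theorem \cite[4.2]{van1996geometric} to $S$ to obtain a finite partition into $C^k$ cells, each of which is a $C^k$ embedded submanifold of $\R^n$. Set $d = \dim S$ and let $C$ be a cell of dimension $d$. The key observation is that $C$ must be relatively open in $S$: otherwise some sequence $(p_n)$ in $S \setminus C$ converges to a point $p \in C$, and after extracting the $p_n$ lie in a single cell $C' \neq C$, so $p \in \overline{C'} \setminus C'$ and, by pairwise disjointness of cells, $C \subseteq \overline{C'} \setminus C'$. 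The closure relations built into a cell decomposition force $\overline{C'} \setminus C'$ to be a union of cells of dimension strictly less than $\dim C'$, yielding $d = \dim C < \dim C' \leq d$, a contradiction. Consequently any $p \in C$ admits an open neighborhood $U \subseteq \R^n$ with $U \cap S = U \cap C$, which is a $d$-dimensional $C^k$ embedded submanifold of $\R^n$.

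The second step transports this local structure to every point of $S$ via the group action. For an arbitrary $y \in S$, I would choose $g \in G$ with $\theta_g(p) = y$; since $\theta$ is $C^k$, the partial map $\theta_g$ is a $C^k$ diffeomorphism of $\R^n$ with inverse $\theta_{g^{-1}}$, and it preserves $S$ setwise because $\theta_g(Gx) = Gx$. Hence $U' := \theta_g(U)$ is an open neighborhood of $y$ in $\R^n$, and
$$U' \cap S \;=\; \theta_g(U \cap S)$$
is a $C^k$ embedded submanifold of $\R^n$ as the image of one under a $C^k$ diffeomorphism. Since every point of $S$ admits such a neighborhood, the orbit $S$ is a $C^k$ embedded submanifold of $\R^n$.

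The main obstacle is verifying the relative openness of a maximal-dimensional cell in $S$; this rests on the closure and dimension inequalities inherent to cell decompositions, and it is what prevents lower-dimensional strata from accumulating on $C$. Once this is secured, the rest of the proof is a clean consequence of the invariance of the embedded-submanifold property under diffeomorphisms and the transitivity of $G$ on its orbit.
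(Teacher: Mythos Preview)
Your proposal is correct and takes essentially the same route as the paper: decompose the orbit into finitely many smooth definable pieces, pick one of maximal dimension, show it is relatively open in the orbit via a frontier/dimension contradiction, and then propagate the local embedded-submanifold structure to every point using the $C^k$ diffeomorphisms $\theta_g$. The one place to tighten is your appeal to ``closure relations built into a cell decomposition'': the raw cell decomposition theorem \cite[4.2]{van1996geometric} does not include the frontier condition you use (that $C\cap\overline{C'}\neq\emptyset$ forces $C\subseteq\overline{C'}$), so deducing $C\subseteq\overline{C'}\setminus C'$ from ``pairwise disjointness'' alone is not justified; the paper sidesteps this by invoking a $C^k$ \emph{stratification} \cite[Theorem 4.8]{van1996geometric}, whose definition already contains the frontier condition, after which $\dim C<\dim C'$ follows from \cite[Chapter 4, Theorem 1.8]{van1998tame}.
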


\cref{fact:embedded_orbit} can be used for example to prove that the set of positive semidefinite matrices with fixed rank is embedded \cite{vandereycken2009embedded}. We next record a standard fact from differential geometry. It will be used for sensitivity analysis.

\begin{fact}
\label{fact:orbit_rank}
    Let $\theta$ be a smooth action of $G$ on a manifold $\cM$ and $\overline{x}\in  \cM$. Then $d(\theta^{(x)})_e$ has constant rank for all $x\in G\overline{x}$.
\end{fact}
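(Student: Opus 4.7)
My plan is to exploit the two-sided equivariance of $\theta^{(x)}$ with respect to left and right translations on $G$, together with the fact that each $\theta_g : M \to M$ is a diffeomorphism. Throughout, let $L_k$ and $R_h$ denote left and right translations on $G$, i.e.\ $L_k(g)=kg$ and $R_h(g)=gh$; these are diffeomorphisms of $G$. For each $g\in G$, $\theta_g : M \to M$ is also a diffeomorphism, since $\theta_g\circ \theta_{g^{-1}}=\theta_e=\mathrm{Id}_M$.

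\textbf{Step 1: constancy of the rank of $d(\theta^{(\overline{x})})_g$ along $G$.} From associativity, $\theta^{(\overline{x})}(kg)=(kg)\overline{x}=k(g\overline{x})=\theta_k(\theta^{(\overline{x})}(g))$, so
\[
\theta^{(\overline{x})}\circ L_k \;=\; \theta_k\circ \theta^{(\overline{x})}.
\]
Differentiating at $g\in G$ via the chain rule yields
\[
d(\theta^{(\overline{x})})_{kg}\circ d(L_k)_g \;=\; d(\theta_k)_{g\overline{x}}\circ d(\theta^{(\overline{x})})_g .
\]
Since $d(L_k)_g$ and $d(\theta_k)_{g\overline{x}}$ are linear isomorphisms, I get $\rank d(\theta^{(\overline{x})})_{kg}=\rank d(\theta^{(\overline{x})})_{g}$ for all $g,k\in G$. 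Taking $g=e$ shows that $\rank d(\theta^{(\overline{x})})_h=\rank d(\theta^{(\overline{x})})_e$ for every $h\in G$.

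\textbf{Step 2: passing from $\overline{x}$ to an arbitrary $x\in G\overline{x}$.} Write $x=h\overline{x}$ with $h\in G$. Then
\[
\theta^{(x)}(g)\;=\;g(h\overline{x})\;=\;(gh)\overline{x}\;=\;\theta^{(\overline{x})}\bigl(R_h(g)\bigr),
\]
so $\theta^{(x)}=\theta^{(\overline{x})}\circ R_h$. Differentiating at $e$ and using that $d(R_h)_e : T_eG \to T_hG$ is an isomorphism gives
\[
\rank d(\theta^{(x)})_e \;=\; \rank d(\theta^{(\overline{x})})_h .
\]

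\textbf{Step 3: conclusion.} Combining Steps 1 and 2, $\rank d(\theta^{(x)})_e = \rank d(\theta^{(\overline{x})})_h = \rank d(\theta^{(\overline{x})})_e$ for every $x\in G\overline{x}$, which is the claim.

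There is no real obstacle here; the only care required is to track the base points under the chain rule so that the isomorphisms supplied by translations and the diffeomorphisms $\theta_k$ are correctly inserted. This argument is essentially a specialization of the equivariant rank theorem \cite[Theorem 7.25]{lee2012smooth}, but recording it as a standalone fact is convenient for the sensitivity computations that follow.
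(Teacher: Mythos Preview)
Your proof is correct and follows essentially the same route as the paper: both factor $\theta^{(x)}=\theta^{(\overline{x})}\circ R_h$ to reduce to the constancy of $\rank d(\theta^{(\overline{x})})_g$ over $G$, which the paper obtains by invoking the equivariant rank theorem \cite[Theorem 7.25]{lee2012smooth} while you spell out that argument directly in Step~1 via $\theta^{(\overline{x})}\circ L_k=\theta_k\circ\theta^{(\overline{x})}$. As you yourself note, your Step~1 is precisely the relevant specialization of that theorem, so the two proofs coincide up to whether that piece is cited or unpacked.
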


The next fact will be used repeatedly to control the distance between tangent spaces of nearby points. While it follows from a more general and involved theory \cite[Section 3]{wedin1972perturbation}, we provide an elementary proof of the special case we are interested in. Let $\mathrm{L}(V,W)$ be the set of linear maps between two finite-dimensional normed vector spaces $V$ and $W$, equipped with the induced norm (again denoted $|\cdot|$). 

\begin{fact}
\label{fact:injective}
    $d(\mathrm{Im}A,\mathrm{Im}B) = O(|A-B|)$ for $A,B \in \mathrm{L}(V,W)$ near an injective map $\bar{A}$.
\end{fact}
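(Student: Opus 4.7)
The plan is to exploit the fact that injectivity is an open condition in the space of linear maps between finite-dimensional spaces, and that an injective linear map is bounded below. This bound below can then be used to transfer a small perturbation $|A-B|$ into a small displacement between matching image vectors.

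First, I would record the uniform lower bound. Since $\bar{A}$ is injective and the unit sphere of $V$ is compact, there exists $\bar{c} > 0$ such that $|\bar{A}v| \geq \bar{c}|v|$ for all $v \in V$. For $A \in \mathrm{L}(V,W)$ with $|A - \bar{A}| \leq \bar{c}/2$, the reverse triangle inequality gives
\begin{equation*}
|Av| \geq |\bar{A}v| - |(A - \bar{A})v| \geq (\bar{c}/2)|v|,
\end{equation*}
so all such $A$ are injective and share the uniform lower bound $|Av| \geq c|v|$ with $c = \bar{c}/2$. I restrict attention to this neighborhood for the remainder.

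Next, I would estimate $d(u, \mathrm{Im} B)$ for a unit vector $u \in \mathrm{Im} A$. Write $u = Av$ with $|Av| = 1$; the lower bound forces $|v| \leq 1/c$. The natural candidate in $\mathrm{Im} B$ is $Bv$, and the operator-norm bound yields
\begin{equation*}
d(Av, \mathrm{Im} B) \leq |Av - Bv| \leq |A - B|\,|v| \leq |A - B|/c.
\end{equation*}
Taking the supremum over unit vectors in $\mathrm{Im} A$ gives $d(\mathrm{Im} A, \mathrm{Im} B) \leq |A - B|/c$, which is exactly the claim.

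There is no real obstacle here; the only subtlety is ensuring that $c$ can be chosen uniformly on a neighborhood of $\bar{A}$, which is handled by the reverse-triangle-inequality step above. The asymmetry of the distance $d(\cdot, \cdot)$ is harmless because the argument is carried out for an arbitrary unit vector in $\mathrm{Im} A$, and the role of $B$ is merely to supply a comparison vector via the same preimage $v$.
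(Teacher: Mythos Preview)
Your proof is correct and follows essentially the same approach as the paper's: both compare $Av$ with $Bv$ for the same preimage $v$ and control $|v|$ via a uniform lower bound on $|Av|$ obtained from the injectivity of $\bar{A}$ and a reverse-triangle-inequality perturbation argument. Your presentation is in fact more streamlined---the paper arrives at the same constant $2/\inf_{|x|=1}|\bar{A}x|$ through a longer chain of inequalities that inserts and then bounds the ratio $|\bar{A}x|/|Ax|$, whereas you cut directly to the uniform lower bound $c=\bar{c}/2$ on $|Av|$.
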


\cref{fact:injective} implies that the tangent space of a embedded submanifold of $\mathbb{R}^n$ is locally Lipschitz continuous.  

\begin{fact}{\cite[3.6]{dudek1994nonlinear}}
\label{fact:embedded_tangent}
    If $M$ is a $C\p 2$ embedded submanifold of $\mathbb{R}^n$ and $\overline{x} \in M$, then
    \begin{equation*}
        d(T_xM,T_yM) = O(|x-y|)
    \end{equation*}
    for $x,y\in M$ near $\overline{x}$.
\end{fact}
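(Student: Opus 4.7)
The plan is to reduce to Fact \ref{fact:injective} via a local $C^2$ parametrization. Since $M$ is a $C^2$ embedded submanifold of $\R^n$ of dimension $d$, there exist an open neighborhood $U \subseteq \R^d$, a point $\overline{u} \in U$, and a $C^2$ immersion $\varphi: U \to \R^n$ that is a homeomorphism onto an open neighborhood of $\overline{x}$ in $M$, with $\varphi(\overline{u}) = \overline{x}$. By definition of the embedded submanifold structure, $T_{\varphi(u)} M = \Im d\varphi_u$ and $d\varphi_u$ is injective for every $u \in U$.

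First I would control $d(T_{\varphi(u)} M, T_{\varphi(v)} M)$ in terms of $|u-v|$. Since $\varphi$ is $C^2$, the map $u \mapsto d\varphi_u \in \mathrm{L}(\R^d, \R^n)$ is $C^1$, hence locally Lipschitz: $|d\varphi_u - d\varphi_v| = O(|u-v|)$ near $\overline{u}$. Because $d\varphi_{\overline{u}}$ is injective, Fact \ref{fact:injective} yields
\[ d(T_{\varphi(u)} M, T_{\varphi(v)} M) = d(\Im d\varphi_u, \Im d\varphi_v) = O(|d\varphi_u - d\varphi_v|) = O(|u-v|). \]

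Next I would pass from the parameter distance $|u-v|$ to the ambient distance $|\varphi(u) - \varphi(v)|$. The standard device is a tubular chart: define $\Phi: U \times \R^{n-d} \to \R^n$ by $\Phi(u,s) = \varphi(u) + B(u) s$, where $B(u)$ is a $C^1$ orthonormal frame of the normal space $N_{\varphi(u)} M$, available locally by $C^2$ regularity of $M$ (apply Gram--Schmidt to any $C^1$ frame complementary to $\partial \varphi/\partial u_1, \dots, \partial \varphi/\partial u_d$). The differential $d\Phi_{(\overline{u},0)}$ is then a linear isomorphism of $\R^n$, so the inverse function theorem produces a $C^1$ local inverse of $\Phi$. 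Composing with the projection on the first factor gives a $C^1$ extension of $\varphi^{-1}$ to an open neighborhood of $\overline{x}$ in $\R^n$, which is therefore locally Lipschitz. Hence $|u-v| = O(|\varphi(u) - \varphi(v)|)$ near $\overline{u}$.

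Combining both estimates with $x = \varphi(u)$ and $y = \varphi(v)$ gives $d(T_x M, T_y M) = O(|x-y|)$ for $x,y \in M$ near $\overline{x}$, as desired. The main obstacle is the Lipschitz comparison between parameter and ambient distances, since $\varphi^{-1}$ is a priori only defined on $M$; the tubular extension handles this cleanly, and it is precisely this step that uses the $C^2$ regularity of $M$ (needed for a $C^1$ normal frame and hence a $C^1$ $\Phi$).
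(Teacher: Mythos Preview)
Your proof is correct but takes a different route from the paper. The paper works dually: instead of parametrizing $M$, it represents $M$ locally as a level set of a $C^2$ submersion $\Phi: U \to \R^{n-k}$ with $U$ open in $\R^n$, so that $N_xM = \Im D\Phi(x)^T$. Since $D\Phi(\overline{x})^T$ is injective, \cref{fact:injective} together with the identity $d(T_xM,T_yM)=d(N_xM,N_yM)$ (valid here because the dimensions match, making $d$ symmetric) gives the bound directly in terms of $|x-y|$. The point is that $\Phi$ is already defined on an open subset of $\R^n$, so no parameter-to-ambient conversion is needed; this eliminates your second step and the tubular-neighborhood machinery entirely. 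Your parametrization approach is equally valid and perhaps the more instinctive one, but the level-set trick is shorter. One small remark: the $C^2$ hypothesis is already used in your first step (to make $u\mapsto d\varphi_u$ locally Lipschitz), not only in the tubular extension.
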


\cref{fact:embedded_tangent} can be applied to the case where the manifold $M$ is an orbit of a proper action or a definable orbit of a smooth action. However, we actually require a finer result where the points $x$ and $y$ potentially lie on different orbits. That will the object of two upcoming lemmata. The next fact will be used to convert a stratification of the graph of a function to its domain. The case of Whitney stratifications is stated under \cite[Remark 3]{bolte2007clarke}.

\begin{fact}
\label{fact:verdier}
    Suppose $f:\R^n\to\overline{\R}$ is locally Lipschitz on its domain. The projection onto $\dom f$ of a $C^k$ Verdier stratification of $\gph f$ is a $C^k$ Verdier stratification of $\dom  f$ such that $f$ is $C\p k$ on each stratum.
\end{fact}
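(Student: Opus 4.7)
\vspace{2mm}
\noindent\textbf{Proof proposal.} Let $\pi:\R^{n+1}\to\R^n$ denote the canonical projection onto the first $n$ coordinates, let $\mathcal{X}$ be a $C^k$ Verdier stratification of $\gph f$, and set $\mathcal{Y}=\{\pi(X):X\in\mathcal{X}\}$. The plan is to verify the four ingredients of Definition \ref{defn_vari_strat} in turn: (i) each $\pi(X)$ is a $C^k$ embedded submanifold and $\pi|_X$ is a $C^k$ diffeomorphism onto $\pi(X)$; (ii) $\mathcal{Y}$ is a locally finite partition of $\dom f$ satisfying the frontier condition; (iii) the Verdier condition passes through $\pi$; (iv) $f$ is $C^k$ on each $\pi(X)$.

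For (i), the crux is to exploit local Lipschitz continuity of $f$ on $\dom f$. First, $\pi|_X$ is injective because $X\subseteq\gph f$. Second, for any smooth curve $\gamma(t)=(x(t),f(x(t)))$ in $X$, the Lipschitz bound forces $|(f\circ x)'(0)|\leq L|x'(0)|$, so every tangent vector $(u,s)\in T_{\tilde x}X$ at $\tilde x=(x,f(x))$ satisfies $|s|\leq L|u|$. In particular no tangent space to $X$ contains the vertical direction, hence $d\pi|_{T_{\tilde x}X}$ is injective and $\pi|_X$ is a $C^k$ immersion. The inverse map $x\mapsto(x,f(x))$ is Lipschitz on $\pi(X)$, hence a homeomorphism, so $\pi|_X$ is a smooth embedding and $\pi(X)$ is a $C^k$ embedded submanifold. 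Property (iv) then follows immediately: on $\pi(X)$, $f$ equals the projection on the last coordinate composed with $(\pi|_X)^{-1}$, both of which are $C^k$.

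For (ii), disjointness of $\pi(X)$ and $\pi(X')$ for $X\neq X'$ comes from the same observation that a point $x\in\dom f$ has only one lift $(x,f(x))$ to the graph. For the frontier condition, suppose $x\in\pi(X)\cap\overline{\pi(X')}$; then there exist $y_m\in\pi(X')$ with $y_m\to x$, and by continuity of $f$ on $\dom f$, $(y_m,f(y_m))\in X'$ converges to $(x,f(x))\in X$, so $X\subseteq\overline{X'}$, which projects to $\pi(X)\subseteq\overline{\pi(X')}$. Local finiteness is handled by choosing, around $x\in\dom f$, a neighborhood $U$ of $(x,f(x))$ meeting finitely many strata and shrinking its $\R^n$ component using the Lipschitz bound so that every $y$ near $x$ in $\dom f$ lifts into $U$.

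The main obstacle is (iii), the transfer of the Verdier condition. Fix adjacent strata $\pi(X)\subseteq\overline{\pi(X')}\setminus\pi(X')$, points $x\in\pi(X)$, $y\in\pi(X')$ near $\overline x$, and their lifts $\tilde x,\tilde y$. The graph version gives $d(T_{\tilde x}X,T_{\tilde y}X')=O(|\tilde x-\tilde y|)=O(|x-y|)$ since $|f(x)-f(y)|\leq L|x-y|$. I would then prove the quantitative bound
\[
d(\pi(V),\pi(W))\ \leq\ \sqrt{1+L^2}\,\,d(V,W)
\]
for any pair of $L$-slope-bounded subspaces $V,W\subseteq\R^{n+1}$: given a unit $v\in\pi(V)$, lift it to $\tilde v\in V$ with $|\tilde v|\leq\sqrt{1+L^2}$ (using the slope bound), pick $\tilde w\in W$ within $d(V,W)$ of $\tilde v/|\tilde v|$, and project back to get a vector of $\pi(W)$ within $\sqrt{1+L^2}\,d(V,W)$ of $v$. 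Applied to $V=T_{\tilde x}X$ and $W=T_{\tilde y}X'$, whose slopes are both bounded by the local Lipschitz constant, this yields $d(T_x\pi(X),T_y\pi(X'))=O(|x-y|)$, establishing the Verdier condition for $\mathcal{Y}$. The perturbed projection formula part of Definition \ref{defn_vari_strat} then follows from the discussion cited from \cite{davis2025active} preceding the definition, which derives it directly from the Verdier condition and local Lipschitz continuity.
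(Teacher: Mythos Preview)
Your proposal is correct and follows essentially the same route as the paper: use local Lipschitz continuity to rule out vertical tangent vectors (hence $\pi|_X$ is a smooth embedding), transfer the partition/frontier/local-finiteness conditions via continuity of $f$, and pass the Verdier inequality through $\pi$ with the $\sqrt{1+L^2}$ factor coming from the slope bound on tangent spaces. One small note: the statement only asks for a Verdier stratification of $\dom f$ with $f$ being $C^k$ on strata, not a full variational stratification in the sense of Definition~\ref{defn_vari_strat}, so your closing sentence about the perturbed projection formula is extraneous here (that step is handled separately in the paper via Fact~\ref{fact:perturbed_formula}).
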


The final fact will be used to infer the perturbed projection formula from a Verdier stratification of the graph of a function. The result is used in the proof of \cite[Theorem 3.30]{davis2025active}.

\begin{fact}
\label{fact:perturbed_formula}
    Suppose the graph of a lower semicontinuous function $f:\R^n\to\overline{\R}$ admits a Verdier stratification $\cX$. Then for all $X\in \cX$ and $(x,f(x))\in X$, we have $$N_{\epii f}(x,f(x))\subseteq N_{(x,f(x))}X.$$
\end{fact}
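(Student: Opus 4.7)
The plan is to prove the inclusion first at the level of regular normals at a single base point of $X$, and then extend to the limiting cone by handling separately the two cases of whether the approximating points lie on $\gph f$ or strictly above it. Fix $\bar z = (x, f(x)) \in X$. Since $X \subseteq \gph f \subseteq \epii f$, restricting the defining inequality of $\widehat{N}_{\epii f}(\bar z)$ to test points $z \in X$ near $\bar z$ immediately yields $\widehat{N}_{\epii f}(\bar z) \subseteq \widehat{N}_X(\bar z)$. Because $X$ is an embedded submanifold of $\R^{n+1}$, its variational regular normal cone coincides with the differential-geometric normal space, so $\widehat{N}_{\epii f}(\bar z) \subseteq N_{\bar z} X$.

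For the limiting normal cone, take $v \in N_{\epii f}(\bar z)$ approximated by sequences $\bar z_k = (x_k, t_k) \to \bar z$ in $\epii f$ and $v_k \in \widehat{N}_{\epii f}(\bar z_k) \to v$. If $t_k > f(x_k)$ along a subsequence, lower semicontinuity of $f$ makes $\{z : f(z) < t_k\}$ open and containing $x_k$; hence a full ball in the $x$ direction together with a symmetric interval in the $t$ direction around $(x_k, t_k)$ lies in $\epii f$. Applying the regular-normal inequality along both orientations of each coordinate direction forces $v_k = 0$, so the limit $v = 0$ belongs to $N_{\bar z} X$ trivially.

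If instead $t_k = f(x_k)$ along a subsequence, local finiteness of $\cX$ places, after a further subsequence, all $\bar z_k$ in a single stratum $Y$, and the regular-normal step applied at $\bar z_k$ gives $v_k \in N_{\bar z_k} Y$. When $Y = X$, continuity of the normal-space map along the embedded submanifold $X$ yields $v \in N_{\bar z} X$ (for any $w \in T_{\bar z} X$ approximate by $w_k \in T_{\bar z_k} X$ and pass to the limit in $\langle v_k, w_k \rangle = 0$). When $Y \neq X$, the partition and frontier conditions of the stratification force $X \subseteq \overline{Y} \setminus Y$, so the Verdier condition applies to the pair $(X,Y)$ at $\bar z$ and supplies $d(T_{\bar z} X, T_{\bar z_k} Y) = O(|\bar z - \bar z_k|) \to 0$; for every unit $w \in T_{\bar z} X$ one can then pick $w_k \in T_{\bar z_k} Y$ with $w_k \to w$, and pass to the limit in $\langle v_k, w_k \rangle = 0$ to obtain $\langle v, w \rangle = 0$, hence $v \in N_{\bar z} X$. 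The principal obstacle is precisely this $Y \neq X$ subcase: it is where the Verdier condition, rather than mere continuity of tangent spaces within a single stratum, becomes indispensable, since we must approximate $T_{\bar z} X$ by tangent spaces at nearby points of the \emph{distinct} stratum $Y$.
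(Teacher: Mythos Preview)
Your argument in the subcase $t_k > f(x_k)$ is incorrect. You claim that lower semicontinuity of $f$ makes $\{z : f(z) < t_k\}$ open, but this is the characterization of \emph{upper} semicontinuity; lower semicontinuity only guarantees that the sets $\{z : f(z) > \alpha\}$ are open. Consequently, $(x_k,t_k)$ need not be an interior point of $\epii f$, and $v_k$ need not vanish. A concrete obstruction: take $n=1$, $f(0)=0$, $f(z)=1$ for $z\neq 0$ (which is lower semicontinuous and whose graph admits a Verdier stratification). With $x_k=0$ and $t_k=1/k$ one has $t_k>f(x_k)=0$, yet near $(0,1/k)$ the epigraph coincides with $\{0\}\times[0,\infty)$, so $\widehat N_{\epii f}(0,1/k)=\R\times\{0\}\neq\{0\}$.

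The paper avoids this case split altogether. It first shows, via an elementary tangent-cone translation argument, that $\widehat N_{\epii f}(x_k,t_k)\subseteq \widehat N_{\epii f}(x_k,f(x_k))$ for every $(x_k,t_k)\in\epii f$; this moves all regular normals down to the graph. Then lower semicontinuity is used in the correct direction, namely $\liminf_k f(x_k)\ge f(x)$ together with $f(x_k)\le t_k\to f(x)$, to conclude $(x_k,f(x_k))\to(x,f(x))$. From that point on the proof is essentially your second case (stratum $Y$, same or different from $X$, Verdier when different), which you handled correctly.
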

 
\section{Orbital projection formulae}
\label{sec:orbit_proj}

In order to establish the desired orbital projection formulae, we begin with two lemmata.

\subsection{Tangent spaces to orbits}
\label{subsec:Tangent spaces to orbits}

We begin with an easy case. 

\begin{lemma}
    \label{lemma:free_tangent}
    If $G$ acts smoothly on $\R^n$ and freely at $\overline x\in\R^n$, then 
    \begin{equation*}
        d(T_xGx,T_yGy) = O(|x-y|)
    \end{equation*} 
    for $x,y$ near $\overline x$.
\end{lemma}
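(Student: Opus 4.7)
The plan is to reduce the Lipschitz estimate on the tangent spaces to a Lipschitz estimate on the differentials $d(\theta^{(x)})_e : \fg \to \R^n$, and then apply \cref{fact:injective}. The key observation is that freeness at $\overline{x}$ provides exactly the injectivity hypothesis needed by that fact.

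First, I would recall from the background (\cref{subsubsec:Orbits}) that the image of $d(\theta^{(x)})_e$ is precisely $T_xGx$. Factoring $\theta^{(x)} = \Theta^{(x)} \circ \pi$ through the quotient $\pi: G \to G/G_x$ and using that $\Theta^{(x)}$ is an injective immersion shows $\ker d(\theta^{(x)})_e = \mathrm{Lie}(G_x)$. Since $G_{\overline{x}} = \{e\}$ is discrete, its Lie algebra is trivial, so $d(\theta^{(\overline{x})})_e$ is injective.

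Next, I would use that $\theta$ is smooth to conclude that the map $\Phi : \R^n \to \mathrm{L}(\fg,\R^n)$ defined by $\Phi(x) = d(\theta^{(x)})_e$ is itself smooth, in particular locally Lipschitz near $\overline{x}$. Since $\Phi(\overline{x})$ is injective and injectivity is an open condition in $\mathrm{L}(\fg,\R^n)$, both $\Phi(x)$ and $\Phi(y)$ remain injective (and stay in a neighborhood of $\Phi(\overline{x})$) for $x,y$ near $\overline{x}$.

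Finally, I would invoke \cref{fact:injective} directly with $A = \Phi(x)$ and $B = \Phi(y)$, both close to the injective map $\bar{A} = \Phi(\overline{x})$, to obtain
\[
    d(T_xGx, T_yGy) \;=\; d(\mathrm{Im}\,\Phi(x), \mathrm{Im}\,\Phi(y)) \;=\; O(|\Phi(x) - \Phi(y)|) \;=\; O(|x-y|).
\]
I do not anticipate a major obstacle: the only subtle point is to resist the temptation to compare $x$ and $y$ via the base point $\overline{x}$ through a triangle inequality argument, which would only give $O(|x-\overline{x}| + |y-\overline{x}|)$. Applying \cref{fact:injective} symmetrically to $\Phi(x)$ and $\Phi(y)$ circumvents this and delivers the desired rate in $|x-y|$ directly, without needing $x$ and $y$ to lie on a common orbit.
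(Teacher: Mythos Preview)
Your proposal is correct and follows essentially the same strategy as the paper: identify $T_xGx = \mathrm{Im}\,d(\theta^{(x)})_e$, establish injectivity of $d(\theta^{(\overline{x})})_e$ from freeness, and apply \cref{fact:injective} together with smooth (hence locally Lipschitz) dependence of the differential on $x$. The only cosmetic differences are that the paper works in a chart $(U,\varphi)$ at $e$ to make the Lipschitz bound on $x\mapsto d(\theta^{(x)})_e$ explicit via the mean value theorem, and it derives injectivity through the equivariant rank theorem (injective $\theta^{(\overline{x})}$ of constant rank is an immersion) rather than via $\ker d(\theta^{(x)})_e = \mathrm{Lie}(G_x)$.
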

\begin{proof}
   Let $\theta:G\times \R^n\to \R^n$ be the action of $G$. We have $T_xGx=\mathrm{Im}d(\theta^{(x)})_e$ for all $x$ near $\overline{x}$. Let $(U,\varphi)$ be a chart at $e$. Define $\widehat\theta:\vp(U)\times \R^n\to \R^n$ as  $\widehat\theta(v,x)=\theta(\vp^{-1}(v),x)$. By the chain rule \cite[Proposition 3.6(b)]{lee2012smooth}, we have $d(\widehat\theta^{(x)})_{\vp(e)}=d(\theta^{(x)})_e\circ d(\vp^{-1})_{\vp(e)}$. Since $\varphi$ is a diffeomorphism, by \cite[Proposition 3.6(d)]{lee2012smooth} $d\varphi^{-1}$ is an isomorphism at $\vp(e)$. Thus, $\Im d(\widehat\theta^{(x)})_{\vp(e)}=\Im d(\theta^{(x)})_e$. Since $G$ acts freely at $\overline x$, $\theta^{(\overline{x})}$ is injective. Indeed, for all $g,h\in G$, if $\theta^{(\overline{x})}(g)=\theta^{(\overline{x})}(h)$, then $\theta(gh^{-1},\overline{x})=\overline{x}$ and $gh^{-1}\in G_{\overline{x}}=\{e\}$, i.e., $g=h$.  By the equivariant rank theorem \cite[Theorem 7.25]{lee2012smooth}, $\theta^{(\overline x)}$ has constant rank, and is thus a smooth immersion. In particular, $d(\theta^{(\overline x)})_e$ is injective, so is $d(\widehat\theta^{(\overline{x})})_{\vp(e)}$. By \cref{fact:injective}, $$d(T_xGx,T_yGy)=d(\Im d(\widehat\theta^{(x)})_{\vp(e)},\Im d(\widehat\theta^{(y)})_{\vp(e)})\leq  C|d(\widehat\theta^{(x)})_{\vp(e)}-d(\widehat\theta^{(y)})_{\vp(e)}|\leq  CL|x-y|$$ for some $C>0$. The existence of a constant $L>0$ is due to the mean value theorem.
\end{proof}

We next consider a harder case, for which we rely on the slice theorem. 

\begin{lemma}
    \label{lemma:orbit}
    If $G$ acts smoothly on $\mathbb{R}^n$ and typically at $\overline{x} \in \mathbb{R}^n$, then 
    \begin{equation*}
        d(T_xGx,T_yGy) = O(|x-y|)
    \end{equation*} 
    for $x,y\in \mathbb{R}^n$ near $\overline{x}$.
\end{lemma}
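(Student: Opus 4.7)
The plan is to parametrize points $y$ near $\overline{x}$ using the slice theorem, and then express $T_y Gy$ as the image of a smoothly-varying linear map whose kernel is the \emph{fixed} subspace $\fg_{\overline{x}}$; restricting to a complement reduces the problem to the injective situation handled by \cref{fact:injective}, just as in \cref{lemma:free_tangent}.

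First, since the action is typical at $\overline{x}$, the slice theorem provides a smooth normal slice $A$ at $\overline{x}$ with $G_a=G_{\overline{x}}$ for every $a\in A$ and a diffeomorphism $\Phi:G/G_{\overline{x}}\times A\to GA$, $(gG_{\overline{x}},a)\mapsto ga$, onto a neighborhood of $\overline{x}$. The isotropy $G_{\overline{x}}$ is closed as the preimage of $\{\overline{x}\}$ under $g\mapsto g\overline{x}$, so $G/G_{\overline{x}}$ is a smooth manifold and $\pi:G\to G/G_{\overline{x}}$ is a smooth submersion. Picking a smooth local section $\sigma$ of $\pi$ near $\pi(e)$ with $\sigma(\pi(e))=e$, the map $\Psi(v,a)=\sigma(v)a$ is a smooth diffeomorphism from a neighborhood of $(\pi(e),\overline{x})$ onto a neighborhood of $\overline{x}$, and writing $\Psi^{-1}(y)=(v(y),a(y))$ and $g(y)=\sigma(v(y))$ gives smooth maps $y\mapsto g(y)\in G$, $y\mapsto a(y)\in A$.

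Second, for $y=g\,a$ in this neighborhood, I would exploit the equivariance $\theta^{(a)}\circ L_g = \theta_g\circ \theta^{(a)}$ (where $\theta_g(z)=gz$ is the action of a single group element and $L_g$ is left translation on $G$) to rewrite
\[
F(g,a) \,:=\, d(\theta^{(a)})_g\circ d(L_g)_e \,=\, d(\theta_g)_a\circ d(\theta^{(a)})_e \,:\, \fg\to\R^n.
\]
Since $d(L_g)_e$ is an isomorphism, $T_y Gy = \mathrm{Im}\, F(g,a)$; since $\theta_g$ is a diffeomorphism, $\ker F(g,a) = \ker d(\theta^{(a)})_e = \fg_a$; and normality of the slice forces $\fg_a = \fg_{\overline{x}}$ for every $a\in A$, making the kernel independent of $(g,a)$.

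Finally, I would fix a linear complement $W$ of $\fg_{\overline{x}}$ in $\fg$; then $F(g,a)|_W:W\to\R^n$ is injective with the same image $T_y Gy$. Read through a chart $(U,\varphi)$ at $e$ exactly as in \cref{lemma:free_tangent}, $(g,a)\mapsto F(g,a)|_W$ is smooth, so composing with $y\mapsto (g(y),a(y))$ makes $y\mapsto F(g(y),a(y))|_W\in L(W,\R^n)$ locally Lipschitz near $\overline{x}$. Since $F(e,\overline{x})|_W$ is injective, \cref{fact:injective} applies and yields
\[
d(T_y Gy, T_{y'}Gy') \,=\, d\bigl(\mathrm{Im}\,F(g(y),a(y))|_W,\,\mathrm{Im}\,F(g(y'),a(y'))|_W\bigr) \,=\, O(|y-y'|).
\]
The main obstacle is producing a Lipschitz-in-$y$ parametrization $(g(y),a(y))$ despite a possibly nontrivial isotropy group; this is exactly where the slice theorem (for the product structure around $G\overline{x}$) and the existence of a smooth local section of $\pi$ are indispensable.
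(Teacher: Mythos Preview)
Your argument is correct and follows essentially the same strategy as the paper: invoke the slice theorem to obtain a smooth parametrization of a neighborhood of $\overline{x}$, exhibit $T_yGy$ as the image of a smoothly varying injective linear map, and conclude via \cref{fact:injective}. The only cosmetic difference is that the paper works directly on the quotient $G/G_{\overline{x}}$ (using the diffeomorphism $\Theta:G/G_{\overline{x}}\times A\to GA$ and the injective differentials $d(\Theta^{(a)})_{g}$ read through charts), whereas you stay on $G$ via a local section of $\pi$ and restrict $F(g,a)=d(\theta_g)_a\circ d(\theta^{(a)})_e$ to a complement $W$ of the fixed kernel $\fg_{\overline{x}}$; these two devices are equivalent ways of producing the required family of injective maps.
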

\begin{proof}
    Since the action is typical at $\overline{x}$, by the slice theorem there exists a smooth normal slice $A \subseteq \R\p n$ at $\overline{x}$. Hence $GA$ is an open subset of $\R\p n$ and $\Theta : G/G_{\overline{x}} \times A \to GA$ is a diffeomorphism.     
    Let $x,y \in \mathbb{R}^n$ be near $\overline{x} = \Theta(G_{\overline{x}},\overline{x}) \in GA$, and hence in $GA$. Let $(g_x,a_x) = \Theta^{-1}(x)$ and $(g_y,a_y) = \Theta^{-1}(y)$. The inclusion $\iota: G/ G_{\overline{x}} \times \{a_x\} \hookrightarrow G/ G_{\overline{x}} \times A$ is a local diffeomorphism, hence so is the composition $\Theta^{(a_x)} = \Theta \circ \iota$ \cite[Proposition 4.6]{lee2012smooth}.
    Restricting the codomain to its image $Gx$ yields a bijective local diffeomorphism, hence a diffeomorphism from $G/G_{\overline{x}}$ to $Gx$, and in particular an isomorphism from $T_{g_x} (G/G_{\overline{x}})$ to $T_x Gx$. It follows that $Gx$ is an embedded submanifold of $\mathbb{R}^n$, $\mathrm{Im}\hspace*{.5mm}d(\Theta^{(a_x)})_{g_x} = T_xGx$, and $\mathrm{dim}\hspace*{.5mm}T_xGx = \mathrm{dim}\hspace*{.5mm}T_yGy$. Let $(U,\varphi)$ and $(V,\psi)$ be charts of $G/G_{\overline{x}}$ at $G_{\overline x}$ and $A$ at $\overline{x}$. Define $\widehat\Theta:\varphi(U)\times \psi(V)\to \R^n$ as  $\widehat\Theta(u,v)=\Theta(\varphi^{-1}(u),\psi\p{-1}(v))$, and $\widehat \Theta^{(v)}$ accordingly. Since $d(\Theta^{(\overline{x})})_{G_{\overline{x}}}$ is injective, so is $d(\widehat\Theta^{(\psi(\overline{x}))})_{\varphi(G_{\overline{x}})}$. Applying \cref{fact:injective} to $d(\widehat\Theta^{(\psi(\overline{x}))})_{\psi(G_{\overline{x}})}$ yields 
    \begin{align*}
           d(T_xGx,T_yGy) &= d(\mathrm{Im}\hspace*{.5mm}d(\Theta^{(a_x)})_{g_x},\mathrm{Im}\hspace*{.5mm} d(\Theta^{(a_y)})_{g_y}) \\
           &=d(\mathrm{Im}\hspace*{.5mm}d(\widehat\Theta^{(\psi(a_x))})_{\vp(g_x)},\mathrm{Im}\hspace*{.5mm} d(\widehat\Theta^{(\psi(a_y))})_{\vp(g_y)}) \\
           &\leq  C|d(\widehat\Theta^{(\psi(a_x))})_{\vp(g_x)}- d(\widehat\Theta^{(\psi(a_y))})_{\vp(g_y)}| \\
           &\leq  CL|(\vp(g_x),\psi(a_x))-(\vp(g_y),\psi(a_y))|\\
           &=CL|(\vp,\psi)\circ \Theta^{-1}(x)- (\vp,\psi)\circ\Theta^{-1}(y)|\\
           &=CLL'|x-y|
    \end{align*}
    for some $C,L,L'>0$, where we use the mean value theorem twice. 
\end{proof}

\subsection{Formulae}
\label{subsec:Formulae}

We are now ready to obtain the desired projection formulae. The power of \cref{lemma:orbit} can be felt in the following propositions. The first can be viewed as an orbital projection formula. 

\begin{proposition}
    \label{prop:orbital_projection_formula}
    If $f:\mathbb{R}^n\rightarrow \overline{\mathbb{R}}$ is $G$-invariant typically at $\overline{x} \in \dom f$, then
    \begin{equation*}
        P_{T_{\overline{x}}G\overline{x}}\partial f(\overline{x}) \subseteq \{0\} ~~~ \text{and} ~~~ P_{T_{\overline{x}}G\overline{x}} \partial^\infty f(\overline{x}) = \{0\}.
    \end{equation*}
\end{proposition}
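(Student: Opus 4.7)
The plan is to derive both projection identities from the regular subdifferential inequality combined with the Lipschitz control on orbital tangent spaces provided by \cref{lemma:orbit}, and then pass to the limit.

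First I would establish the infinitesimal orbital orthogonality at nearby points: for every $x$ near $\overline{x}$ and every $v \in \widehat{\partial}f(x)$, one has $v \perp T_xGx$. Fix $w \in \fg = T_eG$ and a smooth curve $\gamma : J \to G$ with $\gamma(0) = e$ and $\gamma'(0) = w$. Then $t \mapsto \theta(\gamma(t),x)$ is a smooth curve in $Gx$ with velocity $d(\theta^{(x)})_e(w)$ at $t=0$, and $G$-invariance gives $f(\theta(\gamma(t),x)) = f(x)$. The regular subdifferential inequality yields
\[0 \ge \langle v,\,\theta(\gamma(t),x) - x\rangle + o(|\theta(\gamma(t),x) - x|).\]
Dividing by $|t|$ and letting $t \to 0^{\pm}$ gives $\langle v, d(\theta^{(x)})_e(w)\rangle = 0$. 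Since $T_xGx = \Im d(\theta^{(x)})_e$ and $w \in \fg$ is arbitrary, this yields $P_{T_xGx}v = 0$.

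Next I would invoke \cref{lemma:orbit}, which applies because the action is typical at $\overline{x}$, to obtain $d(T_xGx,T_{\overline{x}}G\overline{x}) = O(|x - \overline{x}|)$ for $x$ near $\overline{x}$. Under the typical-action hypothesis the orbits near $\overline{x}$ all have the same dimension (this is implicit in the proof of \cref{lemma:orbit} via the slice diffeomorphism $\Theta$), so by the fact $|P_V - P_W| = d(V,W)$ for equidimensional subspaces one has $|P_{T_{\overline{x}}G\overline{x}} - P_{T_xGx}| = O(|x-\overline{x}|)$. For $v \in \partial f(\overline{x})$, pick $(x_k,v_k) \in \gph\widehat{\partial}f$ with $(x_k,f(x_k),v_k) \to (\overline{x},f(\overline{x}),v)$; the previous paragraph gives $P_{T_{x_k}Gx_k}v_k = 0$, hence
\[|P_{T_{\overline{x}}G\overline{x}}v_k| = |(P_{T_{\overline{x}}G\overline{x}} - P_{T_{x_k}Gx_k})v_k| \le C|x_k - \overline{x}|\,|v_k| \to 0,\]
and passing to the limit yields $P_{T_{\overline{x}}G\overline{x}}v = 0$. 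The horizon case is treated identically with $v_k$ replaced by $\tau_kv_k$, which remains bounded since $\tau_kv_k \to v$. The reverse inclusion $\{0\} \subseteq P_{T_{\overline{x}}G\overline{x}}\partial^\infty f(\overline{x})$ then reduces to the standard fact that $0 \in \partial^\infty f(\overline{x})$ whenever $\widehat{\partial}f$ is non-empty on a sequence approaching $\overline{x}$ in the $f$-topology, which holds under local lower semicontinuity at $\overline{x}$.

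The main obstacle is controlling the variation of $T_xGx$ as $x$ approaches $\overline{x}$ through points on possibly different orbits: the infinitesimal orthogonality $v_k \perp T_{x_k}Gx_k$ is useless in the limit unless these tangent spaces converge to $T_{\overline{x}}G\overline{x}$ in a quantitative way. This is precisely what the typical-action hypothesis buys via the slice theorem and \cref{lemma:orbit}; without it the limiting argument in the last step would collapse.
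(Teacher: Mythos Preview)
Your proof is correct and follows essentially the same route as the paper's: establish $\widehat{\partial}f(x)\perp T_xGx$ for $x$ near $\overline{x}$ from the regular-subdifferential inequality and $G$-invariance, then invoke \cref{lemma:orbit} to pass to the limit along the defining sequences for $\partial f$ and $\partial^\infty f$. The paper phrases the first step via the regular normal cone to the orbit rather than curves, and takes the limit using $d(\mathrm{span}(v_k),N_{\overline{x}}G\overline{x})$ instead of the projection-operator difference, but these are cosmetic variations of the same argument.
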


\begin{proof}
    First observe that $\widehat{\partial} f(\overline{x}) \subseteq \widehat{N}_{G\overline{x}}(\overline{x})$. Indeed, let $v \in \widehat{\partial} f(\overline{x})$. For $x \in \mathbb{R}^n$ near $\overline{x}$, we have
    \begin{equation*}
        f(x) \geq   f(\overline{x}) + \langle v , x - \overline{x} \rangle + o(|x-\overline{x}|).
    \end{equation*}
    In particular, for $x \in G\overline{x}$ near $\overline{x}$, by invariance $f(x) = f(\overline{x})$ and so $\langle v , x - \overline{x} \rangle \leq  o(|x-\overline{x}|)$. This means that $v\in \widehat{N}_{G\overline{x}}(\overline{x})$. Since $G\overline{x}$ is an embedded submanifold of $\mathbb{R}^n$, we in fact have $\widehat{\partial} f(\overline{x}) \subseteq \widehat{N}_{G\overline{x}}(\overline{x}) = N_{\overline{x}}G\overline{x}$.

    We next show that $\partial f(\overline{x}) \subseteq N_{\overline{x}}G\overline{x}$.
    Let $v \in \partial f(\overline{x})$. There is a sequence $(x_k,v_k) \in \gph \hspace*{.5mm} \widehat{\partial} f$ such that $(x_k,f(x_k),v_k) \rightarrow (\overline{x},f(\overline{x}),v)$. By the previous paragraph, $v_k \in \widehat{\partial} f(x_k) \subseteq N_{x_k} Gx_k$. Lemma \ref{lemma:orbit} ensures that
    \begin{equation*}
        d(\mathrm{span}(v_k),N_{\overline{x}} G\overline{x}) \leq  d(N_{x_k} Gx_k,N_{\overline{x}} G\overline{x}) = d(T_{x_k} Gx_k,T_{\overline{x}} G\overline{x}) = O(|x_k - \overline{x}|).
    \end{equation*}
    %As in the proof of Lemma \ref{lemma:embedded_normal}, we conclude that $v\in N_{\overline{x}} G\overline{x}$.
    Without loss of generality, we can assume that $v\neq 0$ and thus $v_k$ are eventually nonzero. Since $d(\mathrm{span}(v_k),N_{\overline{x}} G\overline{x}) = d(v_k/|v_k|,N_{\overline{x}} G\overline{x})$, there exists a sequence $w_k\in N_{\overline{x}} G\overline{x}$ such that $|v_k/|v_k|-w_k|$ converges to zero. Since $v_k/|v_k|$ converges to $v/|v|$, so does $w_k$. The closed set $N_{\overline{x}} G\overline{x}$ thus contains $v/|v|$ and $v$ of course.

    Finally, we show that $\partial^\infty f(\overline{x}) \subseteq N_{\overline{x}}G\overline{x}$. Let $v \in \partial^\infty f(\overline{x})$. There are sequences $\tau_k\searrow 0$ and $(x_k,v_k) \in \gph \hspace*{.5mm}\widehat{\partial} f$ such that $(x_k,f(x_k),\tau_k v_k) \rightarrow (\overline{x},f(\overline{x}),v)$. Replacing $v_k$ by $\tau_k v_k$ in the previous paragraph, we see that the proof is the same.
\end{proof}
The second can be viewed as a perturbed orbital projection formula.
\begin{proposition}
    \label{prop:perturbed_orbital_projection_formula}
    Let $f:\mathbb{R}^n\rightarrow \overline{\mathbb{R}}$ is $G$-invariant typically at $\overline{x} \in \dom f$, then
    %\begin{empheq}[box=\fbox]{gather*}
    \begin{gather*}
       \forall v\in\partial f(y),\quad  |P_{T_xGx}(v)| = O(|x-y|\hspace*{.3mm}|v|) \\ ~~
        \text{and}~~ \\ \forall w\in\partial^\infty f(y),\quad |P_{T_xGx}(w)| = O(|x-y|\hspace*{.3mm}|w|).
    %\end{empheq}
    \end{gather*}
    for $x \in \dom f$, $y \in \dom \hspace*{.5mm}\partial f$ (respectively $y \in \dom f$) near $\overline{x}$.
\end{proposition}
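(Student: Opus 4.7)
The plan is to mirror the proof of \cref{prop:orbital_projection_formula} but to keep quantitative track of where the argument ``loses'' information, using \cref{lemma:orbit} as the main quantitative input. First I would shrink attention to a neighborhood $U$ of $\overline{x}$ small enough that the slice theorem produces a smooth normal slice $A$ at $\overline{x}$ with $\Theta:G/G_{\overline{x}}\times A\to GA\supseteq U$ a diffeomorphism. As in the proof of \cref{lemma:orbit}, this forces every orbit $Gy$ with $y\in U$ to be an embedded submanifold of $\R^n$ of fixed dimension $d$, and furnishes the uniform Lipschitz estimate $d(T_xGx,T_yGy)\le C|x-y|$ for $x,y\in U$.

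Second, I would recycle the first paragraph of the proof of \cref{prop:orbital_projection_formula} to observe that, for every $y\in U\cap\dom f$ and every $v\in\widehat{\partial}f(y)$, the invariance of $f$ along $Gy$ yields $v\in\widehat{N}_{Gy}(y)=N_yGy=(T_yGy)^{\perp}$; in particular $P_{T_yGy}(v)=0$. Now I would convert the tangent-space estimate into a projector estimate using two facts recalled in the stratification subsection: $d(V^{\perp},W^{\perp})=d(W,V)$ and, when $\dim V=\dim W$, $|P_V-P_W|=d(V,W)$. Because $\dim T_xGx=\dim T_yGy=d$ on $U$, this gives
\begin{equation*}
|P_{T_xGx}-P_{T_yGy}|=d(T_xGx,T_yGy)\le C|x-y|.
\end{equation*}
Combining this with $P_{T_yGy}(v)=0$ yields the regular-subgradient bound
\begin{equation*}
|P_{T_xGx}(v)|=|(P_{T_xGx}-P_{T_yGy})(v)|\le C|x-y|\,|v|
\end{equation*}
for all $x,y\in U$ and $v\in\widehat{\partial}f(y)$.

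Third, I would promote the bound to $\partial f$ and $\partial^\infty f$ by the standard limiting procedure built into their definitions. For $v\in\partial f(y)$ with $y\in U\cap\dom\partial f$, pick sequences $(y_k,v_k)\in\gph\widehat{\partial}f$ with $(y_k,f(y_k),v_k)\to(y,f(y),v)$; eventually $y_k\in U$, so the previous paragraph gives $|P_{T_xGx}(v_k)|\le C|x-y_k|\,|v_k|$, and passing to the limit produces the desired $O(|x-y|\,|v|)$ estimate. For $w\in\partial^\infty f(y)$, pick $\tau_k\searrow 0$ and $(y_k,v_k)\in\gph\widehat{\partial}f$ with $(y_k,f(y_k),\tau_kv_k)\to(y,f(y),w)$; the same inequality applied to $v_k$ and multiplied through by $\tau_k$ gives $|P_{T_xGx}(\tau_kv_k)|\le C|x-y_k|\,|\tau_kv_k|$, and the limit yields the horizon bound.

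I do not expect a substantive obstacle: the hard analytic work has been concentrated in \cref{lemma:orbit} (whose proof routes through the slice theorem) and in the elementary projector identity $|P_V-P_W|=d(V,W)$. The only point requiring care is the uniform constant: one must verify that the constant $C$ coming from \cref{lemma:orbit} is valid on a single neighborhood of $\overline{x}$ so that the limiting arguments go through unchanged, which is automatic from the proof of \cref{lemma:orbit} via the mean-value theorem on a compact neighborhood.
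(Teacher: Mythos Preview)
Your proposal is correct and essentially the same as the paper's: both combine $P_{T_yGy}(v)=0$ for subgradients at $y$ with the projector bound $|P_{T_xGx}-P_{T_yGy}|=d(T_xGx,T_yGy)$ and \cref{lemma:orbit}. The only cosmetic difference is that the paper invokes \cref{prop:orbital_projection_formula} directly at $y$ (legitimate since typicality at $\overline{x}$ propagates to nearby $y$) to get $P_{T_yGy}\partial f(y)\subseteq\{0\}$ and $P_{T_yGy}\partial^\infty f(y)=\{0\}$ in one stroke, whereas you establish the bound first for $\widehat{\partial}f$ and then redo the limiting argument; both orderings yield the same inequality with the same constant.
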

\begin{proof}
For all $x \in \dom f$, $y \in \dom \hspace*{.5mm}\partial f$ near $\overline{x}$, we have
\begin{subequations}
\begin{align}
\forall y\in\partial f(y),\quad    |P_{T_xGx}(v)| & = |P_{T_xGx}(v)-P_{T_yGy}(v)| \label{sopf_a} \\
    & \leq  |P_{T_xGx}-P_{T_yGy}||v| \label{sopf_b} \\
    & = d(T_xGx,T_yGy)|v| \label{sopf_c} \\
    & = O(|x-y|\hspace*{.3mm}|v|) \label{sopf_d}
\end{align}
\end{subequations}
Indeed, \eqref{sopf_a} is due to Lemma \ref{prop:orbital_projection_formula}. \eqref{sopf_b} uses the definition of the operator norm. \eqref{sopf_c} follows from $\mathrm{dim}T_xGx = \mathrm{dim}T_yGy$ and \cite[Lemma 3.2]{morris2010rapidly}. Finally, \eqref{sopf_d} is due to Lemma \ref{lemma:orbit}. One argues in the same fashion for $\partial^\infty f$.
\end{proof}
When comparing the two above formulae with the projection formulae discussed in \cref{subsec:O-minimal structures}, bear in mind that $\nabla_{Gx}f(x)=0$ since $f$ agrees with a constant function on $Gx$.
Note that Propositions \ref{prop:orbital_projection_formula} and \ref{prop:perturbed_orbital_projection_formula} also hold under the assumptions of \cref{lemma:free_tangent}, which requires a free action. Without the free or typical assumptions, one needs to make an assumption on the objective function $f$, as follows. Below, $\mathrm{int}$ denotes the interior.
\begin{proposition}
    \label{prop:projection_formula_ll}
    If $f:\R^n\to\overline{\R}$ is $G$-invariant and Lipschitz continuous near $\overline{x}\in\rint \dom  f$, then $$P_{T_{\overline{x}}G\overline{x}}  \partial f(\overline x)=\{0\}.$$
\end{proposition}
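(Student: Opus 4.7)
My plan is to work directly from the definition of $\partial f$ as a limit of regular subdifferentials, bypassing the slice theorem and \cref{lemma:orbit}, both of which exploited typicality. The key point is that $G$-invariance produces, at every point of $\dom f$, a smooth curve through it on which $f$ is constant, and this by itself already forces regular subgradients to be orthogonal to $\Im d(\theta^{(x)})_e$ at that point---no uniformity across orbits is required.

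Concretely, I take $v\in\partial f(\overline{x})$ together with approximating sequences $(x_k,v_k)\in\gph\hspace*{.5mm}\widehat{\partial} f$ satisfying $(x_k,f(x_k),v_k)\to(\overline{x},f(\overline{x}),v)$. Fix $u\in T_eG$ and a smooth curve $\sigma\colon(-\epsilon,\epsilon)\to G$ with $\sigma(0)=e$ and $\sigma'(0)=u$, and set $\gamma_k(t)=\theta(\sigma(t),x_k)$. Then $\gamma_k(0)=x_k$, $\gamma_k'(0)=d(\theta^{(x_k)})_e(u)=:w_k$, and $f(\gamma_k(t))=f(x_k)$ by invariance (which in particular keeps $\gamma_k(t)$ inside $\dom f$ for small $t$). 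Substituting $y=\gamma_k(t)$ into
\[
f(y)\geq f(x_k)+\langle v_k,y-x_k\rangle+o(|y-x_k|)
\]
and using the Taylor expansion $\gamma_k(t)-x_k=tw_k+O(t^2)$ yields $0\geq t\langle v_k,w_k\rangle+o(|t|)$ as $t\to 0$; the one-sided limits $t\to 0^\pm$ then force $\langle v_k,w_k\rangle=0$ for every $k$.

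Sending $k\to\infty$, smoothness of $\theta$ makes $x\mapsto d(\theta^{(x)})_e(u)$ continuous, so $w_k\to w:=d(\theta^{(\overline{x})})_e(u)$ and $\langle v,w\rangle=0$. Since $u\in T_eG$ was arbitrary and $T_{\overline{x}}G\overline{x}=\Im d(\theta^{(\overline{x})})_e$, this gives $v\perp T_{\overline{x}}G\overline{x}$, i.e., $P_{T_{\overline{x}}G\overline{x}}(v)=0$. The Lipschitz hypothesis enters only to ensure that $\partial f(\overline{x})$ is nonempty (via $\overline{\partial} f(\overline{x})=\overline{\mathrm{co}}[\partial f(\overline{x})+\partial^\infty f(\overline{x})]$ being nonempty and compact for locally Lipschitz $f$), so that the right-hand side $\{0\}$ is actually attained rather than being vacuous. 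The main obstacle I anticipated was that without typicality one loses the Lipschitz control of $T_xGx$ as $x$ varies and cannot invoke \cref{lemma:orbit}; the argument above sidesteps this by testing against a fixed direction $u\in T_eG$, for which $x\mapsto d(\theta^{(x)})_e(u)$ is continuous by smoothness of $\theta$ alone, so no comparison of tangent spaces along a varying orbit is needed.
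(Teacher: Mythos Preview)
Your proof is correct and takes a genuinely different route from the paper's. The paper first establishes $\langle \nabla f(x), d(\theta^{(x)})_e(u)\rangle = 0$ at differentiable points $x$ via the smooth chain rule, passes to the limit along differentiable $x_k\to\overline{x}$ (whose existence uses Rademacher, hence the Lipschitz hypothesis) to obtain $\langle \overline{\nabla} f(\overline{x}), T_{\overline{x}}G\overline{x}\rangle=\{0\}$, and then invokes \cite[Theorem 9.61]{rockafellar2009variational} ($\mathrm{co}\,\overline{\nabla} f(\overline{x})=\mathrm{co}\,\partial f(\overline{x})$) to transfer this to $\partial f(\overline{x})$. You instead test regular subgradients directly against curves in the orbit, showing $\langle v_k, d(\theta^{(x_k)})_e(u)\rangle=0$ for every $v_k\in\widehat{\partial}f(x_k)$ and $u\in T_eG$, and then pass to the limit using only continuity of $x\mapsto d(\theta^{(x)})_e(u)$. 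This sidesteps Rademacher and Theorem 9.61 entirely; as you note, Lipschitz continuity then enters only at the end to guarantee $\partial f(\overline{x})\neq\emptyset$, so your argument actually yields the inclusion $P_{T_{\overline{x}}G\overline{x}}\partial f(\overline{x})\subseteq\{0\}$ for any $G$-invariant $f$, without any regularity assumption on $f$---a mild strengthening. One cosmetic point: writing $\gamma_k(t)-x_k=tw_k+O(t^2)$ presumes the action is $C^2$, whereas the paper's convention allows $C^1$; but $o(t)$ is all your inequality $0\ge t\langle v_k,w_k\rangle+o(|t|)$ needs, so this does not affect the argument.
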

\begin{proof}
   Let $\theta$ denote the action. Suppose $f$ is differentiable at $x$. Since $\theta^{(x)}$ is smooth, there exists a chart $(U,\varphi)$ around $e\in G$ such that $\theta_{ x}\circ \varphi^{-1}:U\to\R^n$ is smooth. By the invariance of $f$ and the chain rule \cite[Proposition 3.6(b)]{lee2012smooth}, we have $$0=d(f\circ\theta^{(x)}\circ \vp^{-1})_{\vp(e)}=df( x)\circ d(\theta^{(x)}\circ \vp^{-1})_{\vp(e)}=df(x)\circ d(\theta^{(x)})_e \circ d(\vp^{-1})_{\vp(e)}.$$ Since $\vp^{-1}$ is a diffeomorphism, $d(\vp^{-1})_{\vp(e)}$ is an isomorphism \cite[Proposition 3.6(d)]{lee2012smooth}. Thus 
    \begin{equation}
        \label{eq:ll}
        \forall v\in \fg,\quad  
        0=df(x)( d(\theta^{(x)})_e(v))=\langle \nabla f(x),d(\theta^{(x)})_e(v)\rangle.
    \end{equation}
    Let $\widehat\theta:\vp(U)\times \R^n\to \R^n $ be defined by $\widehat\theta(v,x)= \theta(\vp^{-1}(v),x)$, and consider $\widehat\theta^{(x)}:U\ni v\mapsto \widehat\theta(v,x)$. We have $d(\widehat\theta^{(x)})_{\vp(e)}=d(\theta^{(x)})_e\circ d(\vp^{-1})_{\vp(e)}$, and thus $d(\theta^{(x)})_e=d(\widehat\theta^{(x)})_{\vp(e)}\circ (d(\vp^{-1})_{\vp(e)})^{-1}=d(\widehat\theta^{(x)})_{\vp(e)}\circ d\vp_e$. Let $e_i$ denote the canonical basis of $\R^k$, and let $b_i=d(\vp^{-1})_{\vp(e)}(e_i)$. For any $v=v^ib_i\in \fg$, we have $$d(\theta^{(x)})_e(v)=v^i d(\theta^{(x)})_e(b_i)=v^i d(\widehat\theta^{(x)})_{\vp(e)}(d\vp_e (b_i))=v^i d(\widehat\theta^{(x)})_{\vp(e)}(e_i)=v^i\frac{\partial \widehat\theta}{\partial v_i}(\varphi(e),x).$$ We conclude that the function $\R^n\ni x\mapsto d(\theta^{(x)})_e(v) $ is continuous for any $v\in \fg$. Passing to the limit in \eqref{eq:ll} yields $\langle \overline\nabla f(\overline x), d(\theta^{(\overline  x)})_e(v)  \rangle=\{0\}$ for all $ v\in \fg.$ Thus, $\langle \overline\nabla f(\overline x), T_{\overline{x}}G\overline{x} \rangle=\{0\}$. Since $f$ is Lipschitz continuous near $\overline x$, by \cite[Theorem 9.61]{rockafellar2009variational}, we have $\mathrm{co} \overline{\nabla} f(\overline{x}) = \mathrm{co}\hspace*{.3mm} \partial f(\overline{x})$. Hence $\{0\}=\langle \overline \nabla f(\overline{x}),T_{\overline{x}}G\overline{x}\rangle =\langle \mathrm{co}\hspace*{.3mm} \overline \nabla f(\overline{x}),T_{\overline{x}}G\overline{x}\rangle    =\langle \mathrm{co}\hspace*{.3mm}\partial f(\overline x),T_{\overline{x}}G\overline{x}\rangle=\langle \partial f(\overline x), T_{\overline{x}}G\overline{x}\rangle.$ 
\end{proof}
Note that $x$ is required to lie in the orbit in the formula below, in contrast to \cref{prop:perturbed_orbital_projection_formula}.
\begin{proposition}
    \label{prop:perturbed_projection_formula_ll}
    If $f:\R^n\to\overline{\R}$ is $G$-invariant and Lipschitz continuous near $\overline x\in\rint \dom  f$, then $$\forall v\in\partial f(y),\quad |P_{T_xGx}(v) |=O(|x-y|\hspace*{.3mm}|v|)$$ for $x\in G\overline{x}$ and $y\in \R\p n$ near $\overline{x}$.
\end{proposition}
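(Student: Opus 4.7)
The plan is to combine \cref{prop:projection_formula_ll} applied at $y$ with a quantitative comparison of the two orbital tangent spaces, played out in a chart of $G$ at the identity. Since $\overline{x}\in\rint\dom f$ and $f$ is Lipschitz near $\overline{x}$, both conditions persist at every point $y$ sufficiently close to $\overline{x}$, so \cref{prop:projection_formula_ll} applies at $y$ and yields $P_{T_yGy}(v)=0$ for every $v\in\partial f(y)$. The problem thereby reduces to showing $|P_{T_xGx}(v)-P_{T_yGy}(v)|=O(|x-y|\hspace{.3mm}|v|)$. I cannot invoke \cref{lemma:free_tangent} or \cref{lemma:orbit} because the action is assumed neither free nor typical at $\overline{x}$.

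Instead, I would pull the action back through a chart $(U,\vp)$ at $e\in G$ with $\vp(e)=0$ and set $\widehat\theta(u,z)=\theta(\vp^{-1}(u),z)$, exactly as in the proofs of \cref{lemma:free_tangent,lemma:orbit}. The linear map $B_z:=d(\widehat\theta^{(z)})_0$ then satisfies $\Im B_z=T_zGz$, and smoothness of $\widehat\theta$ furnishes a constant $L>0$ with $|B_x-B_y|\leq L|x-y|$ for $x,y$ near $\overline{x}$. The key point, and where the assumption $x\in G\overline{x}$ becomes essential, concerns the singular structure of $B_x$: by \cref{fact:orbit_rank}, the map $d(\theta^{(\cdot)})_e$ has constant rank $r$ along $G\overline{x}$, and hence so does $B_{(\cdot)}$. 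Since $\sigma_r(B_{\overline{x}})>0$ and $z\mapsto B_z$ is continuous, there exist $\sigma>0$ and a neighborhood $W$ of $\overline{x}$ such that the smallest nonzero singular value of $B_x$ exceeds $\sigma$ for every $x\in G\overline{x}\cap W$. Consequently, for every $w\in T_xGx=\Im B_x$ there exists $u\in(\ker B_x)^\perp$ with $B_xu=w$ and $|u|\leq\sigma^{-1}|w|$.

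With this pseudo-inverse control in hand, given $v\in\partial f(y)$ I would select $u\in(\ker B_x)^\perp$ with $B_xu=P_{T_xGx}(v)$ and $|u|\leq\sigma^{-1}|P_{T_xGx}(v)|$. Because $P_{T_yGy}(v)=0$ places $v$ in $(T_yGy)^\perp=(\Im B_y)^\perp=\ker B_y^*$, one has $B_y^*v=0$, and so
\begin{align*}
|P_{T_xGx}(v)|^2 &= \langle v,B_xu\rangle = \langle B_x^*v,u\rangle = \langle (B_x-B_y)^*v,u\rangle \\
&\leq |B_x-B_y|\, |v|\, |u| \leq \sigma^{-1}L\,|x-y|\,|v|\,|P_{T_xGx}(v)|.
\end{align*}
Dividing through by $|P_{T_xGx}(v)|$ (or noting the bound is trivial when it vanishes) yields the claim with constant $L/\sigma$. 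The main obstacle is the pseudo-inverse control, which genuinely requires $x\in G\overline{x}$: off the orbit, the rank of $B_x$ can jump up and the smallest nonzero singular value need not be bounded away from zero.
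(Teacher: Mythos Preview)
Your proof is correct and follows the same overall strategy as the paper: pull back to a chart $(U,\vp)$ at $e$, set $B_z=d(\widehat\theta^{(z)})_{\vp(e)}$ so that $\Im B_z=T_zGz$, invoke \cref{prop:projection_formula_ll} at $y$ to get $v\in(\Im B_y)^\perp$, and use \cref{fact:orbit_rank} to fix the rank of $B_x$ along $G\overline{x}$. The execution differs in how the comparison between $T_xGx$ and $T_yGy$ is carried out. The paper selects a fixed basis $b_1,\dots,b_r$ such that $B_{\overline x}b_i$ spans $\Im B_{\overline x}$, forms the auxiliary maps $\Lambda_z=B_z|_{\mathrm{span}(b_i)}$, observes that $\Lambda_z$ is injective near $\overline{x}$ with $\Im\Lambda_y\subseteq T_yGy$ (so $P_{\Im\Lambda_y}(v)=0$) and $\Im\Lambda_x=T_xGx$ for $x\in G\overline{x}$, and then applies \cref{fact:injective} to bound $|P_{\Im\Lambda_x}-P_{\Im\Lambda_y}|$. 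You keep the full map $B_z$, control the pseudo-inverse via a uniform lower bound on $\sigma_r(B_x)$ for $x\in G\overline{x}$ near $\overline{x}$, and use the adjoint identity $\langle v,B_xu\rangle=\langle(B_x-B_y)^*v,u\rangle$ to estimate $|P_{T_xGx}(v)|$ directly. Your route is slightly more self-contained, bypassing both the auxiliary construction $\Lambda_z$ and the black-box \cref{fact:injective}; the paper's route is more modular, reducing everything to a perturbation lemma it has already established. Underneath, the two are the same mechanism: the paper's $\inf_{|u|=1}|\Lambda_{\overline x}u|$ in \cref{fact:injective} plays exactly the role of your $\sigma_r(B_{\overline x})$.
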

\begin{proof}
    Let $(U,\vp)$ be a chart at $e\in G$. Let $\widehat\theta=\theta\circ (\vp^{-1},\mathrm{Id}_{\R^n})$ and $\widehat\theta^{(x)}=\theta^{(x)}\circ \vp^{-1}$ for all $x\in \R^n$. By the chain rule \cite[Proposition 3.6(b)]{lee2012smooth}, $d(\widehat\theta^{(x)})_{\vp(e)}=d(\theta^{(x)})_e\circ d(\vp^{-1})_{\vp(e)}$. Consider $b_i$ such that $d(\widehat\theta^{(\overline x)})b_i$ is a basis of $\Im d(\widehat\theta^{(\overline x)})_{\vp(e)}$, as well as the linear map $B(y)=y^ib_i$. Define $\Lambda_x=d(\widehat\theta^{(x)})_{\vp(e)}\circ B$ for all $x\in \R^n$. The map $\Lambda_{\overline{x}}$ is injective by construction. Since $\widehat\theta$ is smooth, $\Lambda_x$ is continuous as a function of $x$. Thus, $\dim \Im\Lambda_x\geq  \dim \Im\Lambda_{\overline{x}}$ for all $x$ near $\overline{x}$, and in particular, $\Lambda_x$ is injective. On the one hand, we have $$ \forall x\in \R^n,\quad \Im \Lambda_x \subseteq \Im d(\widehat\theta^{(x)})_{\vp(e)} = \Im d(\theta^{(x)})_e = T_xGx.  $$  
    By \cref{prop:projection_formula_ll}, it follows that $\partial f(y)\subseteq (\fg  y)^\perp\subseteq (\Im \Lambda_y)^\perp$ for $y$ near $\overline{x}$. So $P_{\Im\Lambda_y}\partial f(y)=\{0\}$. On the other hand,  
    $$\forall x\in G\overline{x},\quad \Im \Lambda_x=T_xGx.  $$
    Indeed, by \cref{fact:orbit_rank}. for all $x\in G\overline{x}$, we have $\dim \Im\Lambda_{\overline x}\leq  \dim \Im\Lambda_x \leq  \dim\Im d\widehat\theta^{(x)}_{\vp(e)}=\dim\Im d(\theta^{(x)})_e=\dim\Im d(\widehat\theta^{(\overline x)})_{\vp(e)}=\dim \Im\Lambda_{\overline{x}}$. Thus, $\dim\Im \Lambda_x=\dim \Im d(\widehat\theta^{(x)})_{\vp(e)}$ and $\Im\Lambda_x=\Im d(\widehat\theta^{(x)})_{\vp(e)}=T_xGx$. By \cref{fact:injective}, for all $x\in G\overline{x}$ and $y$ near $\overline{x}$, and all $v\in\partial f(y)$, we have 
\begin{align*}
    |P_{T_xGx}(v)| & = |P_{\Im \Lambda_x}(v)-P_{\Im\Lambda_y}(v)|  \\
    & \leq |P_{\Im \Lambda_x}-P_{\Im \Lambda_y}|  \hspace*{0.5mm}|v|  \\
    & =L\hspace*{0.5mm}d(\Im \Lambda_x,\Im \Lambda_y)\hspace*{0.5mm}|v|  \\
    & \leq   C\hspace*{.5mm}|\Lambda_x-\Lambda_y|\hspace*{0.5mm}|v|   \\
    & \leq  C\hspace*{.5mm}|d(\widehat\theta^{(x)})_{\vp(e)}-d(\widehat\Theta^{(y)})_{\vp(e)}|  \hspace*{0.5mm}|B|\hspace*{0.5mm}|v|  \\
    & \leq  C|B|\hspace*{0.5mm}L\hspace*{0.5mm}|x-y| \hspace*{0.5mm}|v|
\end{align*}
for some constants $C,L>0$. 
\end{proof}
Thanks to \cref{fact:embedded_orbit} and \cref{prop:perturbed_projection_formula_ll}, the embeddedness and Verdier assumptions in the instability result \cite[Theorem 2.9]{josz2024sufficient} of the subgradient method hold so long as the level set of $f$ locally agrees with an orbit of a smooth action. We will get back to this in \cref{sec:Discrete subgradient dynamics}. 
\begin{proposition}
     \label{prop:orbital_projection_formula_boundary}
    If $f:\R^n\to\overline{\R}$ is regular at $\overline{x}\in \R^n$ and constant on an immersed submanifold $M$ of $\R^n$, then \begin{equation*}
        P_{ T_{\overline x}M}\partial f(\overline{x}) \subseteq \{0\} ~~~ \text{and} ~~~ P_{T_{\overline x}M} \partial^\infty f(\overline{x}) = \{0\}.
    \end{equation*}
    In particular, this holds when $f=g+\delta_C$ where $g:\R^n\to \R$ is Lipschitz continuous near $\overline{x}$, and $g$ and $C\subseteq \R^n$ are regular at $\overline{x}$.
\end{proposition}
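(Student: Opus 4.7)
The plan is to leverage regularity of $f$ to convert statements about $\partial f$ and $\partial^\infty f$ into statements about the regular normal cone to $\epii f$, and then to exploit the constancy of $f$ along smooth curves in $M$.

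First, I would unpack regularity. By definition, $f$ regular at $\overline x$ forces $\widehat N_{\epii f}(\overline x,f(\overline x))=N_{\epii f}(\overline x,f(\overline x))$, and since $\epii f$ is locally closed at $(\overline x,f(\overline x))$, the function $f$ is locally lower semicontinuous there. The background identifications therefore give $\partial f(\overline x)=\widehat\partial f(\overline x)$, and moreover $\partial^\infty f(\overline x)\subseteq\{v:(v,0)\in\widehat N_{\epii f}(\overline x,f(\overline x))\}$. Both of our target inclusions thus reduce to regular-limit conditions at $(\overline x,f(\overline x))$.

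Second, fix $v\in\widehat\partial f(\overline x)$ and any tangent $u\in T_{\overline x}M$. By the smooth-curve characterization of tangent vectors to an immersed submanifold recalled in the background, there is a smooth curve $\gamma:J\to M$ with $\gamma(0)=\overline x$ and $\gamma'(0)=u$. Constancy of $f$ on $M$ collapses the defining liminf for $\widehat\partial f$ to
$$\liminf_{t\to 0,\,t\neq 0}\,\frac{-\langle v,\gamma(t)-\overline x\rangle}{|\gamma(t)-\overline x|}\geq 0.$$
Plugging in $\gamma(t)-\overline x=tu+o(t)$ and taking the two one-sided limits $t\to 0^\pm$ yields $\pm\langle v,u\rangle\leq 0$, hence $\langle v,u\rangle=0$. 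Thus $v\in(T_{\overline x}M)^\perp$ and $P_{T_{\overline x}M}v=0$.

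Third, for $v\in\partial^\infty f(\overline x)$ I would rerun the same argument one dimension higher. From $(v,0)\in\widehat N_{\epii f}(\overline x,f(\overline x))$, testing against the curve $s\mapsto(\gamma(s),f(\overline x))\in\epii f$ gives $\langle v,\gamma(s)-\overline x\rangle\leq o(|\gamma(s)-\overline x|)$, and the one-sided trick again yields $v\perp T_{\overline x}M$. Combined with $0\in\partial^\infty f(\overline x)$, which follows from $\overline x\in\dom f$ and local lower semicontinuity by a standard sequential argument, this upgrades the inclusion to the equality $P_{T_{\overline x}M}\partial^\infty f(\overline x)=\{0\}$.

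Finally, for the ``in particular'' clause I would apply the sum rule \cite[Corollary 10.9]{rockafellar2009variational}: since $g$ is locally Lipschitz one has $\partial^\infty g(\overline x)=\{0\}$, so the qualification condition is automatic; regularity of $\delta_C$ at $\overline x$ is equivalent to regularity of $C$ there; and the corollary then certifies that $f=g+\delta_C$ is regular at $\overline x$, reducing to the main statement. I expect no serious obstacle; the only delicate bookkeeping is the degenerate case $u=0$ in the one-sided liminf step, which is trivial since both sides of the inequality vanish.
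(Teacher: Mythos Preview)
Your proof is correct and follows essentially the same approach as the paper: reduce to the regular subdifferential via regularity, then test against smooth curves in $M$ and exploit that $T_{\overline x}M$ is a linear space to kill both signs. The only cosmetic difference is your handling of $\partial^\infty f$, where you work directly with $(v,0)\in\widehat N_{\epii f}$ and a curve in the epigraph, whereas the paper invokes $\partial^\infty f(\overline x)=\widehat\partial f(\overline x)^\infty$ (\cite[Corollary 8.11]{rockafellar2009variational}) and notes that the horizon cone of a closed convex subset of the subspace $(T_{\overline x}M)^\perp$ stays in that subspace; both routes are equally short.
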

\begin{proof}
    Let $v\in \widehat\partial f(\overline{x})$ and $\gamma: J\to \R^n$ be a smooth curve such that $\gamma(0)=\overline{x}$ and $\gamma(J)\subseteq M$, where $J$ is an interval of $\R$ containing $0$. We have 
    $$   f(\gamma(t))\geq f(\gamma(0))+\langle v,\gamma(t)-\gamma(0) \rangle+o(|\gamma(t)-\gamma(0)|)                       $$
    for $t\in J$ near $0$. Since $f(\gamma(t))=f(\gamma(0))$, we have $0\geq \langle v,\gamma(t)-\gamma(0) \rangle+o(|\gamma(t)-\gamma(0)|)$. Thus $0\geq \langle v, (\gamma(t)-\gamma(0))/|\gamma(t)-\gamma(0)| \rangle+o(1)$. Passing to the limit yields $0\geq \langle v,\gamma'(0)\rangle$. Since $\gamma'(0)\in T_{\overline{x}}M$ by \cite[Proposition 5.35]{lee2012smooth} and $T_{\overline{x}}M$ is a vector space, we have $0\geq \langle v,-\gamma'(0) \rangle$. Hence $\langle v,\gamma'(0) \rangle=0$ and $v\in T_{\overline{x}}M^\perp$. Thus 
    $$     P_{ T_{\overline x}M}\widehat\partial f(\overline{x}) \subseteq \{0\}.    $$
    Since $f$ is regular at $\overline{x}$, by \cite[Corollary 8.11]{rockafellar2009variational} we have 
    \[    \partial f(\overline{x})=\widehat\partial f(\overline x),\quad \partial^\infty f(\overline{x})=\widehat\partial f(\overline{x})^\infty.          \]
    By \cite[Theorem 8.6]{rockafellar2009variational} $\widehat\partial f(\overline{x})$ is closed and convex. Together with \cite[Theorem 3.6]{rockafellar2009variational} we find that $ P_{T_{\overline x}M} \partial^\infty f(\overline{x}) = \{0\}.$ The conclusion now readily follows. As for the particular case,  since $g$ is Lipschitz continuous near $\overline{x}$, by \cite[Theorem 9.13]{rockafellar2009variational} we have $\partial^\infty g(\overline{x}) = \{0\}$. Hence the only combination of vectors $(v,w) \in \partial^\infty g(\overline{x}) \times \partial^\infty \delta_C(\overline{x})$ such that $v+w=0$ is $v=w=0$. Since $\delta_C$ is regular at $\overline{x}$, by \cite[Corollary 10.9]{rockafellar2009variational} $f$ is regular at $\overline{x}$.
\end{proof}  
    
We finally consider the case of conservative fields.

\begin{proposition}
    \label{prop:projection_formula_CF}
    Let $D:\R^n\rightrightarrows\R^n$ be a definable conservative field with $G$-invariant definable potential $f:\R^n\to\overline{\R}$. Suppose $d(0,D)$ is locally bounded and $G\overline{x}$ is definable for some $\overline{x}\in \dom f$. Then 
    $$\forae x\in G\overline{x},~P_{T_xGx}D(x)=\{0\}.$$ 
\end{proposition}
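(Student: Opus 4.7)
The plan is to reduce the desired orthogonality to a pointwise statement along smooth curves inside $G\overline{x}$, exploit conservativity of $D$ along each such curve, and then upgrade the per-curve ``almost every $t$'' to ``almost every $x\in G\overline{x}$'' via a Fubini argument made legitimate by definability.

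First, I would invoke \cref{fact:embedded_orbit} to conclude that the definable orbit $G\overline{x}$ is a $C^k$ embedded submanifold of $\R^n$ of some dimension $d$. I would then fix a definable $C^k$ parameterization $\phi:U\to G\overline{x}$ with $U\subseteq \R^d$ open. The assumption that $d(0,D)$ is locally bounded forces $\dom D$ to be open and to contain $G\overline{x}$ (at any $x$ where $D(x)=\emptyset$ the value $d(0,D)(x)=+\infty$ would preclude local boundedness), so every curve considered below lands in $\dom D$ as required by the conservative field axiom.

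Next, for each $i\in\{1,\ldots,d\}$ and each $u_0\in U$, I would apply conservativity to the smooth (hence absolutely continuous) curve $\gamma(t)=\phi(u_0+te_i)$. Since $\gamma$ stays on the orbit and $f$ is $G$-invariant, $f\circ\gamma$ is constant and its derivative vanishes identically; the conservative field identity then gives
\[
\langle v,\partial_i\phi(u_0+te_i)\rangle=0\quad \text{for every } v\in D(\phi(u_0+te_i))
\]
and almost every $t$. I would then define
\[
N_i=\{u\in U:\exists\, v\in D(\phi(u)),\ \langle v,\partial_i\phi(u)\rangle\neq 0\}.
\]
Because $\gph D$ and $\phi$ are definable, $N_i$ is a definable (hence Lebesgue measurable) subset of $\R^d$. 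The previous step says every line parallel to $e_i$ meets $N_i$ in a set of one-dimensional measure zero, so Fubini yields $\lambda(N_i)=0$ in $\R^d$.

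Setting $N=\bigcup_{i=1}^d N_i$, for every $u\in U\setminus N$ each $v\in D(\phi(u))$ annihilates all $\partial_i\phi(u)$ and is therefore orthogonal to $T_{\phi(u)}G\overline{x}$. Transporting via $\phi$ (which preserves $d$-dimensional null sets) and covering $G\overline{x}$ by countably many such charts delivers the conclusion for almost every $x\in G\overline{x}$. The main obstacle is the Fubini step: without definability the per-curve exceptional sets could glue into a non-measurable subset of the orbit and the argument would break down. Definability of $D$ and of the parameterization $\phi$ is precisely what guarantees the measurability of $N_i$, and it is also what makes the use of \cref{fact:embedded_orbit} available in the first place.
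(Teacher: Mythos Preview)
Your argument is correct and takes a genuinely different route from the paper. The paper proceeds by first truncating $D$ to $D\cap B_r(0)$ (using local boundedness of $d(0,D)$ together with the closed graph to make this nonempty compact valued on a definable neighborhood $U\subseteq G\overline{x}$), observing that $x\mapsto P_{T_xG\overline{x}}(D(x)\cap B_r(0))$ is itself a conservative field on $U$ with the constant potential $f+\delta_U$, and then appealing to the Bolte--Pauwels variational stratification theorem \cite[Theorem~4]{bolte2020conservative} to produce a finite stratification of $U$ on whose top-dimensional strata the projected field is identically $\{0\}$; the lower-dimensional strata are null, and a union over $r\in\mathbb{N}$ plus second countability of $G\overline{x}$ finish the job.

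You instead bypass the black-box stratification theorem and argue directly from the chain-rule identity along coordinate curves in a definable chart, gluing the per-curve null sets via Fubini. This is more elementary and, notably, sidesteps the truncation to $B_r(0)$ entirely: the defining identity of a conservative field already quantifies over all of $D(\gamma(t))$ without any boundedness restriction, so you only need local boundedness of $d(0,D)$ to guarantee the curves land in $\dom D$. Two small remarks: your claim that $\dom D$ is open is slightly stronger than needed and than what the hypothesis literally yields---what you actually use, and what does follow, is $G\overline{x}\subseteq\dom D$; and your definable parameterizations $\phi$ need only cover $G\overline{x}$ up to a $d$-null set (e.g.\ via $C^k$-cell decomposition of the definable orbit), which already suffices for an almost-everywhere conclusion.
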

\begin{proof}
  By \cref{fact:embedded_orbit}, $G\overline{x}$ is embedded, thus every point in $G\overline{x}$ has a definable neighborhood in $G\overline{x}$. Let $x\in G\overline{x}$. Since $d(0,D)$ is locally bounded and $D$ has closed graph, there exists a definable open neighborhood $U$ of $x$ in $G\overline{x}$ and $r>0$ such that $D\cap B_r(0)$ is nonempty compact valued on $U\cap\dom  f$. By definition, we know $P_{TG\overline{x}}(D\cap B_r(0))$ is a conservative field on $U$ with potential $f+\delta_U$. By \cite[Theorem 4]{bolte2020conservative}, there exists stratification $\cX$ of $U$ such that $ P_{TX}(D\cap B_r(0)) =\{0\}$ for all $X\in \cX$. When $X$ has maximal dimension, $T_{x'}X=T_{x'}G\overline{x}$. Thus $ P_{TG\overline{x}}(D\cap B_r(0))=\{0\} $ almost surely on $U$. Since this holds for all $r$ large enough, we have $$P_{TG\overline{x}} D=P_{TG\overline{x}} \bigcup_{r\in \mathbb{N}}D\cap B_r(0)=\bigcup_{r\in \mathbb N} P_{TG\overline{x}}(D\cap B_r(0))\overset{\text{a.e.}}{=}\{0\}\text{ on }U. $$ This equation holds almost everywhere on $G\overline{x}$ because $G\overline{x}$ is second countable.
\end{proof} 
\section{Invariant variational stratification}
\label{sec:invariant}
The previous section demonstrated how symmetries yield projection formulae similar to those of definable functions. In this section, we show that if we combine both ingredients, namely symmetry and definability, then we can obtain a $G$-invariant variational stratification, meaning that each stratum is a $G$-space. 

%The previously used slice theorem is proved using the tubular neighborhood theorem. It thus relies on the exponential mapping and hence on flows of smooth vector fields. 
The slice theorem is proved by applying the tubular neighborhood theorem on an orbit space. Even if the action is definable, it is thus not clear if the slice is definable.
% if action is definable and typical, maybe one conclude with definable tubular neighborhood theorem? \cite[Theorem 6.11]{coste2000introduction}
Fortunately, if the action is free and proper, a definable slice does exist, as we next show. For notational convenience, if $G$ acts smoothly on $\R\p n$, then let $\vec{N}_xGx = \{x\}+N_xGx$ denote the shifted normal space at $x\in \R\p n$.

\begin{lemma}
\label{lemma:normal_space_slice}
   If $G$ acts smoothly on $\R^n$, freely at $\overline{x}$, and properly near $\overline{x}$, then there exists a definable smooth normal slice $U\subseteq \vec{N}_{\overline{x}}G\overline{x}$ at $\overline{x}$.
\end{lemma}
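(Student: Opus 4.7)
The plan is to construct $U$ directly as a small definable ball inside the affine normal space $\vec{N}_{\overline{x}}G\overline{x}$, bypassing the tubular neighborhood construction that underlies the general slice theorem. Since $\vec{N}_{\overline{x}}G\overline{x}$ is an affine subspace of $\R^n$ (the orthogonal complement $N_{\overline{x}}G\overline{x}=T_{\overline{x}}G\overline{x}^\perp$ is a linear subspace, hence semi-algebraic) and any Euclidean ball is definable, the definability of $U$ is automatic; the content lies in checking the slice axioms. Let $\theta:G\times\R^n\to\R^n$ denote the action and consider the restricted action map
\[
\Phi:\,G\times \vec{N}_{\overline{x}}G\overline{x}\to \R^n,\qquad \Phi(g,y)=\theta(g,y).
\]

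First I would verify that $\Phi$ is a local diffeomorphism at $(e,\overline{x})$. Its differential there sends $(v,w)\in \fg\times N_{\overline{x}}G\overline{x}$ to $d(\theta^{(\overline{x})})_e(v)+w$. Freeness at $\overline{x}$ implies, exactly as in the proof of \cref{lemma:free_tangent}, that $\theta^{(\overline{x})}$ is a smooth immersion, so $d(\theta^{(\overline{x})})_e$ is injective with image $T_{\overline{x}}G\overline{x}$. Combined with the decomposition $T_{\overline{x}}G\overline{x}\oplus N_{\overline{x}}G\overline{x}=\R^n$, this makes $d\Phi_{(e,\overline{x})}$ a linear isomorphism, and the inverse function theorem supplies open neighborhoods $V$ of $e$ in $G$ and $U_0$ of $\overline{x}$ in $\vec{N}_{\overline{x}}G\overline{x}$ on which $\Phi$ is a diffeomorphism.

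Next I would take $U$ to be a sufficiently small open ball in $\vec{N}_{\overline{x}}G\overline{x}$ centered at $\overline{x}$ and contained in $U_0$, chosen so that $G(U|U)=\{e\}$. To show such a ball exists, I would argue by contradiction: if every shrinking candidate fails, there are sequences $g_n\in G\setminus\{e\}$ and $y_n,y_n'\in \vec{N}_{\overline{x}}G\overline{x}$ with $y_n,y_n'\to \overline{x}$ and $g_n y_n=y_n'$. Properness near $\overline{x}$ confines $\{g_n\}$ (eventually) to a set with compact closure, so a subsequence converges to some $g\in G$; passing to the limit yields $g\overline{x}=\overline{x}$, and freeness forces $g=e$, hence $g_n\to e$. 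For $n$ large, $(g_n,y_n)$ and $(e,y_n')$ both lie in $V\times U_0$ and satisfy $\Phi(g_n,y_n)=y_n'=\Phi(e,y_n')$, contradicting the injectivity of $\Phi$ on $V\times U_0$.

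Finally I would read off the smooth normal slice axioms. Embeddedness is clear since $U$ is an open subset of an affine subspace; definability is clear by construction. From $G(U|U)=\{e\}$ one immediately gets $G(U|U)U=U$, $G(U|U)=G_{\overline{x}}$, and $G_y\subseteq G(U|U)=\{e\}$ for every $y\in U$. For the submersion condition on $\Phi:G\times U\to\R^n$, the equivariance identity $\Phi(g,y)=\theta_{g_0}(\Phi(g_0^{-1}g,y))$, combined with the fact that $\theta_{g_0}$ and left translation $L_{g_0^{-1}}$ are diffeomorphisms, shows that $d\Phi_{(g_0,y_0)}$ has the same rank as $d\Phi_{(e,y_0)}$, which equals $n$ by the first step. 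The main obstacle is precisely the passage from the purely local information provided by the inverse function theorem to the global condition $G(U|U)=\{e\}$: the IFT only controls $g$ in a neighborhood $V$ of $e$, while the slice axiom must hold for all $g\in G$. Properness near $\overline{x}$ is exactly what bridges this gap, by trapping any hypothetical violating sequence $g_n$ in a compact subset of $G$ and forcing it eventually into $V$, where local injectivity of $\Phi$ delivers the contradiction.
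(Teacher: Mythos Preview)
Your argument is correct and follows essentially the same route as the paper: establish $G(U|U)=\{e\}$ by a contradiction that uses properness to force $g_n\to e$ and then local injectivity of $\Phi$ near $(e,\overline{x})$, and check the submersion condition by transporting the rank from $(e,y_0)$ to $(g_0,y_0)$ via the diffeomorphisms $\theta_{g_0}$ and $L_{g_0^{-1}}$. The only cosmetic difference is that you front-load the inverse function theorem to obtain injectivity of $\Phi$ on $V\times U_0$ directly, whereas the paper derives the same contradiction via an explicit difference-quotient computation showing that the limiting direction $d(\widehat\Theta^{(\overline{x})})_{\vp(e)}u$ lies in $T_{\overline{x}}G\overline{x}\cap N_{\overline{x}}G\overline{x}=\{0\}$.
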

\begin{proof} 
We first show that there exists a neighborhood $U$ of $\overline{x}$ in $\vec{N}_{\overline{x}}G\overline{x}$ such that $G(U|U) = \{e\}$. We argue by contradiction and assume that there exist $g_k \in G \setminus \{e\}$ and $\vec{N}_{\overline{x}}G\overline{x} \ni x_k,y_k \to \overline{x}$ such that $g_k x_k = y_k$. Since the action is proper near $\overline{x}$, there exists a neighborhood $V$ of $\overline{x}$ in $\R\p n$ such that $G(V|V)$ has compact closure. As $x_k,y_k$ eventually lie in $V$, $g_k$ belongs to a compact subset of $G$, which is sequentially compact since $G$ is second countable. Therefore $g_k\to g\in G$, after taking a subsequence if necessary. Passing to the limit in $g_kx_k=y_k$ yields $g  \overline{x}=\overline{x}$, i.e., $g \in G_{\overline{x}}=\{e\}$ since the action is free at $\overline{x}$. 

Consider the restriction $\Theta: G\times \vec{N}_{\overline{x}}G\overline{x} \to \R\p n$ of the action $\theta$ to $G\times \vec{N}_{\overline{x}}G\overline{x}$. Let $(W,\vp)$ be a chart of $e$ in $G$, and $\widehat\Theta:\vp(W)\times \vec{N}_{\overline{x}}G\overline{x} \to \R^n$ be defined by $\widehat\Theta(z,x)=\Theta(\vp^{-1}(z),x)$. Observe that $\widehat\Theta(\vp(g_k),x_k)-\widehat\Theta(\vp(e),x_k)=y_k-x_k$, $\vp(g_k)\neq \vp(e)$, and $\vp(g_k)\to \vp(e)$. By choosing a subsequence if necessary, we have 
    $$\lim_{k\to\infty} \frac{\vp(g_k)-\vp(e)}{|\vp(g_k)-\vp(e)|}\to u$$ for some $|u|=1$. 
    Since $\theta$ is smooth and restricting the domain of a smooth map to an immersed submanifold retains smoothness \cite[Theorem 5.27]{lee2012smooth}, $\Theta$ and $\widehat \Theta$ are smooth. It follows that $$d(\widehat\Theta^{(\overline{x})})_{\vp(e)}u = \lim_{k\to\infty}\frac{\widehat\Theta(\vp(g_k),x_k)-\widehat\Theta(\vp(e),x_k)}{|\vp(g_k)-\vp(e)|} = \lim_{k\to\infty}\frac{y_k-x_k}{|\vp(g_k)-\vp(e)|}\in N_{\overline{x}}G\overline{x}.$$ As argued in the proof of \cref{lemma:free_tangent}, we have $\Im d(\widehat\Theta^{(\overline{x})})_{\vp(e)}=\Im d(\theta^{(\overline x)}
    )_e=T_{\overline{x}}G_{\overline{x}}$, which implies that $d(\widehat\Theta^{(\overline{x})})_{\vp(e)}u\in T_{\overline{x}}G\overline{x}\cap N_{\overline{x}}G\overline{x} =\{0\}$. However, this contradicts the fact that $d(\widehat\Theta^{(\overline{x})})_{\vp(e)}$ is injective as argued in the proof of \cref{lemma:free_tangent}.

    Accordingly, let $U$ be a neighborhood of $\overline{x}$ in $\vec{N}_{\overline{x}}G\overline{x}$ such that $G(U|U) = \{e\}$. After possibly reducing $U$, we can assume that $U$ is definable. Clearly, $U$ is an embedded submanifold of $\R\p n$ such that $\overline{x}\in U$, $G(U|U)U = U$, and $G(U|U) = G_x$ for all $x\in U$. All that remains to show is that $\theta|_{G \times U}$ is a smooth submersion. Since $U$ is open in $\vec{N}_{\overline{x}}G\overline{x}$, this is equivalent to showing that $d\Theta_{(g,x)}$ is surjective for any $(g,x) \in G \times U$. For all $x \in \vec{N}_{\overline{x}}G\overline{x}$, observe that
    \begin{align*}
        \Im d\Theta_{(e,x)} & = \Im d\widehat\Theta_{(\vp(e),x)} \\
        & =\Im d(\widehat\Theta^{(x)})_{\vp(e)} +\Im d(\widehat\Theta_{\varphi(e)})_x \\
        & =\Im d(\Theta^{(x)})_{e} +\Im d(\Theta_{e})_{x} \\
        & =\Im d(\theta^{(x)})_e+N_{\overline{x}}G\overline{x} \\
        & =T_{x}Gx+N_{\overline{x}}G\overline{x}.    
    \end{align*}
    In particular, $\Im d\Theta_{(e,\overline{x})} = T_{\overline{x}}G\overline{x}+N_{\overline{x}}G\overline{x} = \R\p n$, so that $d\Theta_{(e,\overline{x})}$ is surjective. Hence so is $d\Theta_{(e,x)}$ for all $x\in U$, after possibly reducing $U$.  Now let $(g,x)\in G\times U$. For all $h\in G$, we have $\theta(g,\theta(h,x)) = \theta(gh,x)$. By differentiating on both sides and using the chain rule \cite[Proposition 3.6(b)]{lee2012smooth}, we see that $d(\theta_g)_x \circ d(\theta^{(x)})_e = d(\theta^{(x)}\circ L_g)_e = d(\theta^{(x)})_g\circ d(L_g)_e$ where $L_g$ is the left action by $g$. Since $L_g$ is a diffeomorphism on $G$, $d(\theta_g)_x \Im d(\theta^{(x)})_e = \Im d(\theta^{(x)})_g$. As a result,
    \begin{align*}
        \Im d\Theta_{(g,x)} & =\Im d(\Theta^{(x)})_g+\Im d(\Theta_{g})_x \\
        & = \Im d(\theta^{(x)})_g+  d(\theta_{g})_xN_{\overline{x}}G\overline{x} \\ & = d(\theta_g)_x(\Im d(\theta^{(x)})_e+N_{\overline{x}}G\overline{x}) \\
        & = d(\theta_g)_x(T_xGx+N_{\overline{x}}G\overline{x}) \\
        & = d(\theta_g)_x \Im d\Theta_{(e,x)}.    
    \end{align*}
    As $\theta_g$ is a diffeomorphism on $\R^n$ and $\Im d\Theta_{(e,x)}=\R^n$, we find that $d\Theta_{(g,x)}$ is surjective.
\end{proof}

We are now ready to give a stratification result. Note that the action is not assumed to be definable.

\begin{theorem} 
\label{thm:stratification}
Let $k\in \mathbb{N}^*$ and $f:\R^n\to \overline{\R}$ be $G$-invariant, locally Lipschitz on its domain, and definable. Suppose $G$ acts freely at $\overline{x}$ and properly near $\overline{x}$. There exists a neighborhood $V$ of $G\overline{x}$ and a $G$-invariant variational stratification of $f|_{V}$ containing $G\overline{x}$. 
\end{theorem}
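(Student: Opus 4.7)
The plan is to lift a Verdier stratification of a definable normal slice at $\overline{x}$ across $G$-orbits using the slice diffeomorphism, producing a $G$-invariant variational stratification on a whole neighborhood of $G\overline{x}$.

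First, I would invoke \cref{lemma:normal_space_slice} to obtain a definable smooth normal slice $U\subseteq\vec{N}_{\overline{x}}G\overline{x}$ at $\overline{x}$ with $G(U|U)=\{e\}$. Because the action is free at $\overline{x}$, so that $G_{\overline{x}}=\{e\}$, and $U$ is a smooth normal slice, the discussion following the definition of smooth normal slice in \cref{subsubsec:Slices} implies that $\Theta:G\times U\to V$, $(g,u)\mapsto gu$, is a diffeomorphism onto the open neighborhood $V:=GU$ of $G\overline{x}$ in $\R^n$. The restriction $f|_U$ is definable (being the restriction of a definable function to a definable set) and locally Lipschitz on its domain, so by Loi's theorem \cite{le1998verdier} combined with \cref{fact:verdier} there is a finite $C^k$ Verdier stratification $\{S_i\}_{i=1}^N$ of $\dom f\cap U$, compatible with $\{\overline{x}\}$, and such that $f|_{S_i}$ is $C^k$ for every $i$.

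Next, I would define $X_i:=\Theta(G\times S_i)=GS_i$. Each $X_i$ is $G$-invariant by construction and, as the image of the embedded submanifold $G\times S_i$ under the diffeomorphism $\Theta$, is a $C^k$ embedded submanifold of $V$; the collection $\{X_i\}$ is then a finite partition of $\dom f\cap V$ in which $G\overline{x}$ appears as the image of the zero-dimensional stratum $\{\overline{x}\}$. The invariance identity $f(\Theta(g,u))=f(u)$ gives $f|_{X_i}\in C^k$, and closure relations $S_i\subseteq\overline{S_j}\Leftrightarrow X_i\subseteq\overline{X_j}$ follow from $\Theta$ being a homeomorphism. To verify Verdier regularity of a pair $(X_i,X_j)$ at $\overline{p}\in X_i\cap \overline{X_j}$, let $(g_0,\overline{u})=\Theta^{-1}(\overline{p})\in G\times(S_i\cap\overline{S_j})$, pick a chart $\varphi:W\to\R^d$ of $G$ near $g_0$, and work with $\widehat\Theta:\varphi(W)\times U\to\R^n$, $(z,u)\mapsto\Theta(\varphi^{-1}(z),u)$. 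The pair of product strata $(\varphi(W)\times S_i,\varphi(W)\times S_j)$ satisfies Verdier at $(\varphi(g_0),\overline{u})$ because the chart factor contributes $\R^d$ identically to both tangent spaces and because Verdier holds for $(S_i,S_j)$ at $\overline{u}$. Since $\widehat\Theta$ is a $C^k$ diffeomorphism with $|d\widehat\Theta|$ and $|d\widehat\Theta^{-1}|$ locally bounded, the mean value theorem then transfers the Verdier bound $d(T_pX_i,T_qX_j)=O(|p-q|)$ to $\overline{p}$.

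Finally, applying the same transport construction to $\gph(f|_U)$---via the companion diffeomorphism $(g,(u,t))\mapsto(gu,t)$, well defined by the $G$-invariance of $f$---produces a $C^k$ Verdier stratification of $\gph(f|_V)$ whose projection onto $\dom f\cap V$ is exactly $\{X_i\}$. The perturbed projection formula on each $X_i$ then follows from the result of \cite{davis2025active} cited in \cref{subsec:Stratification}, using that $f$ is locally Lipschitz on its domain, and this completes the $G$-invariant $C^k$ variational stratification of $f|_V$ with $G\overline{x}$ as one of its strata. The main technical obstacle is the transfer of Verdier regularity under $\Theta$: although $G$ need not be compact, Verdier is a local condition so one may always localize to a chart, and the bound propagates once $|d\widehat\Theta|$ and $|d\widehat\Theta^{-1}|$ are controlled near the point of interest via \cref{fact:injective} applied to the pair of differentials.
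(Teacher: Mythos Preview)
Your proposal is correct and follows essentially the same route as the paper: invoke \cref{lemma:normal_space_slice} for a definable slice, apply Loi's theorem on the slice, transport via the slice diffeomorphism $\Theta$ (resp.\ $(\Theta,\mathrm{Id}_\R)$ at the graph level), and finish with \cref{fact:verdier} and the Davis--Drusvyatskiy--Jiang perturbed projection formula. The paper's version is slightly more streamlined---it stratifies the graph once, cites directly that Verdier stratifications are preserved by $C^2$ diffeomorphisms \cite[Section 1]{le1998verdier} rather than reproving it in a chart, and only projects to the domain at the very end---so your middle paragraph establishing the Verdier condition on $\{X_i\}$ by hand is redundant once you carry out the graph-level transport in your final paragraph.
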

\begin{proof}
    By \cref{lemma:normal_space_slice}, there exists a definable smooth normal slice $U$ at $\overline{x}$. Hence $\Theta:G\times U \to GU$ is a diffeomorphism ($G/G_{\overline{x}} = G$ since the action is free at $\overline{x}$). Since the restriction $f|_{U}$ is definable, by \cite[Theorem 1.3]{le1998verdier} there exists a $C^k$ Verdier stratification $\cX$ of $\gph f|_{U}$ with finitely many strata. Without loss of generality, $(\overline{x},f(\overline{x}))$ is a stratum of $\cX$. Observe that $G\times \cX$ is a Verdier stratification of $G\times \gph |_U \subseteq \mathrm{GL}(n,\R) \times \R\p{n+1}$. Also $(\Theta,\mathrm{Id}_{\R})$ is a diffeomorphism and $(\Theta,\mathrm{Id}_{\R})(G\times \gph f|_{U}) = \gph f|_{GU}$ since $f$ is $G$-invariant. As Verdier stratifications are invariant under $C\p 2$ diffeomorphisms \cite[Section 1]{le1998verdier},
    $(\Theta,\mathrm{Id}_{\R})(G\times \cX)$ is a Verdier stratification of $\gph f|_{GU}$. By \cref{fact:verdier} and local Lipschitz continuity of $f$, its projection on $\dom f$ is a Verdier stratification of $\dom f$ whose strata are $G$-spaces and include $G\overline{x}$. The function $f$ is also $C\p k$ on each of them. The perturbed projection formula follows from \cref{fact:perturbed_formula} and \cite[Theorem 3.6]{davis2025active}.
    \end{proof}

If one were to apply the stratification by orbit types to $\gph f|_{V}$, a single stratum would be obtained since all orbits are of the same type. This would be vacuous.

\section{Conservation law}
\label{sec:Conservation law}

The orbital projection formulae enable one to derive conservation laws for subgradient dynamics and conservative field dynamics.

\subsection{Subgradient dynamics}
\label{subsec:Subgradient dynamics}

We first generalize the conservation law of gradient dynamics \cite[Proposition 5.1]{zhao2023symmetries} from smooth to nonsmooth objective functions. The expression of the conserved quantity we obtain is new even in the smooth case. Let $I_n$ denote the identity matrix of order $n$. Given a Lie subgroup $G$ of $\mathrm{GL}(n,\mathbb{R})$ equipped with the Frobenius inner product (again denoted $\langle \cdot,\cdot \rangle$), let $\mathfrak{g} = T_{I_n} G$ be the Lie algebra and $\mathrm{s}(\mathfrak{g})$ be its symmetric elements. In contrast to \cite[Proposition 5.1]{zhao2023symmetries}, we do not require the Lie group to be adjoint stable.

% It will be helpful to determine the update rule of the conserved quantity.

% \begin{proposition}
%     \label{prop:update}
%     Let $f:\mathbb{R}^n\rightarrow \mathbb{R}$ be locally Lipschitz and invariant under the natural action of a Lie subgroup $G$ of $\mathrm{GL}(n,\mathbb{R})$, and let $C(x) = P_{\mathrm{s}(\mathfrak{g})}(xx^T)$. If $x \in \mathbb{R}^n$ and $\alpha\in \mathbb{R}$, then 
%     \begin{equation*}
%     \forall v\in\partial f(x),\quad     {C(x - \alpha v) = C(x) + \alpha^2 C(v)}.
%     \end{equation*}
% \end{proposition}
% \begin{proof}
% Since $\langle \partial f(x),\fg x   \rangle=\{0\}$ for all $v\in \partial f(x)$ by the orbital projection formula in \cref{prop:projection_formula_ll}, one has $P_{s(\fg)}(xv^T)=P_{s(\fg)}(vx^T)=0$. Hence $C(x-\alpha v)=P_{s(\fg)}[(x-\alpha v)(x-\alpha v)^T] 
%   =P_{s(\fg)} (xx^T) -\alpha  P_{s(\fg)} (xv^T) -\alpha  P_{s(\fg)} (vx^T)+\alpha^2 P_{s(\fg)} (vv^T) 
%       =P_{s(\fg)}  (xx^T)+\alpha^2 P_{s(\fg)} (vv^T)=C(x)+\alpha^2C(v)$.
% \end{proof}

\begin{corollary}
\label{cor:conserved}
    Let $f:\mathbb{R}^n\rightarrow \overline{\mathbb{R}}$ be invariant under the natural action of a Lie subgroup $G$ of $\mathrm{GL}(n,\mathbb{R})$, and let $C(x) = P_{\mathrm{s}(\mathfrak{g})}(xx^T)$. Fix $\overline{x}\in \dom  f$. Suppose either of the following holds:
    \begin{itemize}
        \item[\rom{1}] the action is free or typical at $\overline x$;
        \item[\rom{2}] $f$ is Lipschitz continuous near $\overline{x}\in\rint \dom  f$ or regular at $\overline{x}$.
    \end{itemize}
    Then, for all $x\in \dom \hspace{.3mm} \partial f$ near $\overline{x}$,
    $$\forall v\in\overline{\partial} f(x),~~\forall \alpha \in \R,~     C(x + \alpha v) = C(x) + \alpha^2 C(v) ~~~\text{and} ~~~ dC(x)(v) = 0.$$
    In particular, $C$
    % Then 
    % \begin{equation*}
    %    % \boxed{
    %    C(x) = P_{\mathrm{s}(\mathfrak{g})}(xx^T)%}
    % \end{equation*}
    is conserved near $\overline{x}$ along the differential inclusion
    \begin{equation*}
      \forae t>0\quad  x'(t) \in - \overline{\partial} f(x(t)).
    \end{equation*}
\end{corollary}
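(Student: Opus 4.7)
The plan is to reduce both stated identities to a single fact: every $v\in\overline{\partial}f(x)$ satisfies $vx^T\perp\fg$ in the Frobenius inner product. The two equalities then follow from a routine matrix expansion, and the conservation along the differential inclusion reduces to the chain rule applied to the composition of the smooth quadratic $C$ with an absolutely continuous curve.

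\textbf{Orbital projection formula for $\overline{\partial}f$.} Under (i), \cref{prop:orbital_projection_formula} (together with the remark after it which extends it to the free case via \cref{lemma:free_tangent}) gives $P_{T_xGx}\partial f(x)\subseteq\{0\}$ and $P_{T_xGx}\partial^\infty f(x)=\{0\}$ for $x$ near $\overline{x}$. Under (ii), \cref{prop:projection_formula_ll} or \cref{prop:orbital_projection_formula_boundary} applied with $M=Gx$ (constant along the orbit by $G$-invariance) yields the same, with the horizon subdifferential trivially zero in the Lipschitz case by \cite[Theorem 9.13]{rockafellar2009variational}. Because $T_xGx$ is a linear subspace, $P_{T_xGx}$ is a continuous linear map, so it annihilates $\partial f(x)+\partial^\infty f(x)$ and hence its closed convex hull $\overline{\partial}f(x)$. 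For the natural action, $T_xGx=\fg x$, and the Frobenius identity $\langle Ax,v\rangle=\langle A,vx^T\rangle$ rewrites this as: for every $A\in\fg$ and every $v\in\overline{\partial}f(x)$, $\langle A,vx^T\rangle=0$.

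\textbf{Algebraic identities.} Expand
\[(x+\alpha v)(x+\alpha v)^T=xx^T+\alpha(xv^T+vx^T)+\alpha^2 vv^T\]
and apply the linear projection $P_{\mathrm{s}(\fg)}$ to obtain
\[C(x+\alpha v)=C(x)+\alpha\, P_{\mathrm{s}(\fg)}(xv^T+vx^T)+\alpha^2 C(v).\]
For any $S\in\mathrm{s}(\fg)\subseteq\fg$, using $S^T=S$ together with the previous step,
\[\langle S,xv^T+vx^T\rangle=\langle Sv,x\rangle+\langle Sx,v\rangle=2\langle Sx,v\rangle=0,\]
so the middle term vanishes and $C(x+\alpha v)=C(x)+\alpha^2 C(v)$. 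Dividing by $\alpha$ and letting $\alpha\to 0$ then gives $dC(x)(v)=P_{\mathrm{s}(\fg)}(xv^T+vx^T)=0$.

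\textbf{Conservation and main obstacle.} For an absolutely continuous trajectory $x(\cdot)$ of the differential inclusion staying near $\overline{x}$, the chain rule applied to the composition of the smooth quadratic $C$ with $x$ gives $(C\circ x)'(t)=dC(x(t))(x'(t))$ almost everywhere. Applying the preceding identity to $-x'(t)\in\overline{\partial}f(x(t))$ and using linearity of $dC(x(t))$ yields $(C\circ x)'(t)=0$ a.e., so $C\circ x$ is constant. The only delicate step is the passage from the projection formulae for $\partial f$ and $\partial^\infty f$ to the corresponding formula for $\overline{\partial}f$; this is however immediate once one observes that projection onto a linear subspace is a continuous linear operation and therefore commutes with sum, convex hull, and closure.
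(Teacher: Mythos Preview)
Your proof is correct and follows essentially the same line as the paper's: obtain $\langle \overline{\partial} f(x), \fg x\rangle = \{0\}$ from the orbital projection formulae (\cref{prop:orbital_projection_formula} with \cref{lemma:free_tangent}, \cref{prop:projection_formula_ll}, \cref{prop:orbital_projection_formula_boundary}), then expand $(x+\alpha v)(x+\alpha v)^T$ and observe that the cross terms project to zero in $\mathrm{s}(\fg)$. The only cosmetic difference is that the paper kills $P_{\mathrm{s}(\fg)}(vx^T)$ and $P_{\mathrm{s}(\fg)}(xv^T)$ separately via the identity $\langle vx^T,w\rangle = \langle xv^T,w^T\rangle = \langle v,wx\rangle = 0$ for all $w\in\fg$, whereas you pair against $S\in\mathrm{s}(\fg)$ directly and use $S=S^T$ to combine the terms; both are equivalent one-line computations.
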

\begin{proof}
By \cref{fact:embedded_orbit} and \cref{prop:orbital_projection_formula}-\ref{prop:projection_formula_ll}, we have $\langle \partial f(x) , \fg x \rangle = \{0\}$ and $\langle \partial^\infty f(x) , \fg x \rangle = \{0\}$ for all $x\in \dom \hspace{.3mm} \partial f$ near $\overline{x}$, whence $\langle \overline{\partial} f(x) ,\fg x \rangle = \langle \overline{\mathrm{co}}\hspace*{.5mm}[\partial f(x)+\partial^\infty f(x)] ,  \fg  x \rangle = \{0\}$. In other words, for all $v \in \overline{\partial} f(x)$ and $w\in \fg$, $\langle v,wx\rangle = \langle vx\p T,w\rangle = \langle xv\p T,w\p T\rangle = 0$, and so $P_{s(\fg)}(vx^T)=P_{s(\fg)}(xv^T)=0$. Hence 
\begin{align*}
    C(x+\alpha v) & = P_{s(\fg)}[(x+\alpha v)(x+\alpha v)^T] 
  =P_{s(\fg)} (xx^T) +\alpha  P_{s(\fg)} (xv^T) +\alpha  P_{s(\fg)} (vx^T)+\alpha^2 P_{s(\fg)} (vv^T) \\ 
      & = P_{s(\fg)}  (xx^T)+\alpha^2 P_{s(\fg)} (vv^T) \\
      & =C(x)+\alpha^2C(v) \\
      & = C(x) + dC(x)(v)\alpha + o(\alpha)
\end{align*}
and $dC(x)(v) = 0$.
%Fix $v \in \mathrm{s}(\mathfrak{g})$ and define the function $q(x) = \langle x ,vx \rangle$. 
For any solution $x(\cdot)$ to the differential inclusion, we have $(C \circ x)'(t) = dC(x(t))(x'(t))=0$ for almost every $t>0$ when $x(t)$ is near $\overline{x}$, and thus $C \circ x$ is constant.
% Hence $\langle x, v x\rangle = \langle xx^T, v \rangle = \langle P_{\mathrm{s}(\mathfrak{g})}(xx^T), v \rangle$ is conserved for all $h \in \mathrm{s}(\mathfrak{g})$. Letting $e_i$ denote an orthonormal basis of $\mathrm{s}(\mathfrak{g})$, we see that $P_{\mathrm{s}(\mathfrak{g})}(xx^T) = \sum_i \langle P_{\mathrm{s}(\mathfrak{g})}(xx^T), e_i \rangle e_i$ is conserved.
\end{proof}
Note that if $G$ is adjoint stable, then $C(x)=P_{\fg}(xx^T)$. Let us illustrate \cref{cor:conserved} with some examples. We begin with a toy example.
\begin{example}[Lorentz group] 
\label{eg:lorentz}
Let $f:\R^n\to\R$ be defined by $$f(x)=(x_1^2+x_2^2+\dots+x_{n-1}^2-x_n^2-1)^2.$$ In other words, $f(x)=(\langle x, Dx \rangle-1)^2$, where $D=\diag(1,\dots,1,-1)$. Observe that $$f(Ax)=(\langle Ax, DAx\rangle-1)^2=(\langle x, A^T DAx\rangle-1)^2=(\langle x, Dx\rangle-1)^2=f(x)
$$ for all $A\in \GL(n,\R)$ such that $A^TDA=D$. The function $f$ is thus invariant under the action of the Lorentz group $$G=\{A\in \GL(n,\R):~A^TDA=D\} $$ whose Lie algebra is given by 
$$\fg=\{B\in \R^{n\times n}:~B^TD+DB=0\}=\left\{\begin{pmatrix}
    A & b \\
    b^T & 0
\end{pmatrix}:~~A^T=-A\in \R^{(n-1)\times (n-1)},~b\in \R^{n-1}\right\}.
$$ 
% This holds because $D$ is invertible, following classical arguments % (see \cite[Example 7.27]{lee2012smooth}).
By \cref{cor:conserved}, a conserved quantity is given by $$C(x)=P_{s(\fg)}(xx^T)=\begin{pmatrix}
0 & \dots &  0 & x_1x_n \\
\vdots & \ddots & \vdots & \vdots \\
0 & \dots & 0 & x_{n-1}x_n \\
 x_1x_n & \dots &  x_{n-1}x_n   &  0
\end{pmatrix} .$$
\end{example}
\begin{example}[$\ell_1$-matrix factorization]
    \label{eg:mf1_conserved}
    Given a matrix $M \in \mathbb{R}^{m\times n}$, consider the locally Lipschitz function
    \begin{equation*}
    \begin{array}{cccc}
         f: & \mathbb{R}^{m\times r}\times \mathbb{R}^{r\times n} & \longrightarrow & \mathbb{R} \\
        & (X,Y) & \longmapsto & \|XY-M\|_1
    \end{array}
    \end{equation*}
    %where $\|A\|_1 = \sum_{i,j} |A_{ij}|$. 
    It is invariant under the action of $\mathrm{GL}(r,\mathbb{R})$ on $\mathbb{R}^{m\times r} \times \mathbb{R}^{r\times n}$ defined by
    \begin{equation*}
        A (X,Y) = (XA,A^{-1}Y) = \begin{pmatrix}
            A^T \otimes I_m & 0 \\
            0 & I_n \otimes A^{-1} 
        \end{pmatrix}
        \begin{pmatrix}
            x \\
            y
        \end{pmatrix}
    \end{equation*}
    where $\otimes$ is the Kronecker product, $x = \mathrm{vec}(X)$, and $y = \mathrm{vec}(Y)$. Thus let
    \begin{equation*}
        G = \left\{\begin{pmatrix}
            A^T \otimes I_m & 0 \\
            0 & I_n \otimes A^{-1} 
        \end{pmatrix} : A \in \mathrm{GL}(r,\mathbb{R}) \right\} \subseteq \mathrm{GL}((m+n)r,\mathbb{R})
    \end{equation*}
    and compute
    \begin{equation*}
        \mathfrak{g} = \left\{\begin{pmatrix}
            B^T \otimes I_m & 0 \\
            0 & -I_n \otimes B
        \end{pmatrix} : B \in \R^{r\times r} \right\}.
    \end{equation*}
    If $e_i$ is the canonical basis of $\mathbb{R}^r$, then taking $B = (e_ie_j^T+e_je_i^T)/(2m^2n^2)$, $i \leq  j$, yields an orthonormal basis of $\mathrm{s}(\mathfrak{g})$. Thus
    \begin{gather*}
        P_{\mathrm{s}(\mathfrak{g})} \left[ \begin{pmatrix}x \\y \end{pmatrix} \begin{pmatrix}x \\y \end{pmatrix}^T\right]\\
        = \\
        \sum_{i\leq  j} \left\langle \begin{pmatrix}x \\y \end{pmatrix} \begin{pmatrix}x \\y \end{pmatrix}^T , \begin{pmatrix}
            \frac{e_ie_j^T+e_je_i^T}{2m^2n^2} \otimes I_m & 0 \\
            0 & -I_n \otimes \frac{e_ie_j^T+e_je_i^T}{2m^2n^2}
        \end{pmatrix} \right\rangle 
        \begin{pmatrix}
            \frac{e_ie_j^T+e_je_i^T}{2m^2n^2} \otimes I_m & 0 \\
            0 & -I_n \otimes \frac{e_ie_j^T+e_je_i^T}{2m^2n^2}
        \end{pmatrix} \\
        = \\
        \sum_{i\leq  j} \left\langle \begin{pmatrix}x \\y \end{pmatrix} , \begin{pmatrix}
             \frac{e_ie_j^T+e_je_i^T}{2m^2n^2} \otimes I_m & 0 \\
             0 & -I_n \otimes \frac{e_ie_j^T+e_je_i^T}{2m^2n^2}
         \end{pmatrix} \begin{pmatrix}x \\y \end{pmatrix} \right\rangle 
         \begin{pmatrix}
             \frac{e_ie_j^T+e_je_i^T}{2m^2n^2} \otimes I_m & 0 \\
             0 & -I_n \otimes \frac{e_ie_j^T+e_je_i^T}{2m^2n^2}
         \end{pmatrix} 
         \\ = \\
         \sum_{i\leq  j} \langle (X,Y) , (X(e_ie_j^T+e_je_i^T), -(e_ie_j^T+e_je_i^T)Y) \rangle
         \begin{pmatrix}
             \frac{e_ie_j^T+e_je_i^T}{4m^4n^4} \otimes I_m & 0 \\
             0 & -I_n \otimes \frac{e_ie_j^T+e_je_i^T}{4m^4n^4}
         \end{pmatrix} 
         \\ = \\
        \sum_{i\leq  j} \langle X^TX-YY^T , e_ie_j^T+e_je_i^T \rangle
        \begin{pmatrix}
            \frac{e_ie_j^T+e_je_i^T}{4m^4n^4} \otimes I_m & 0 \\
            0 & -I_n \otimes \frac{e_ie_j^T+e_je_i^T}{4m^4n^4}
        \end{pmatrix}
    \end{gather*}
    is conserved by \cref{cor:conserved}. This recovers the known result that $X^TX-YY^T$ is conserved \cite[Proposition 4.5]{josz2023certifying} (see \cite{arora2018optimization,du2018algorithmic,josz2023global,marcotte2024abide} in the smooth case). 
    
    The action is proper near and free at $(\overline{X},\overline{Y})$ if $\mathrm{rank}\overline{X} = r$ or $\mathrm{rank}\overline{Y} = r$. Indeed, properness follows from the inequality $\|XA\|_F\geq  \sigma_{\min}(X)\|A\|_F$, where $\sigma_{\min}(A)$ is the minimal singular value of $A$. The action is free at $(\overline X,\overline Y)$. To see why, notice that if $\rank(\overline{X})=r$ and $\overline{X}A=\overline{X}$ where $A\in \GL(r,\R)$, then $\overline{X}^T\overline{X}A=\overline{X}^T\overline{X}$ and $A=(\overline{X}^T\overline{X})^{-1}=\overline{X}^T\overline{X}=I_r$. By \cref{thm:stratification}, $f$ admits an invariant variational stratification near $(\overline{X},\overline{Y})$.
\end{example}

\begin{remark}
\label{rem:conserved}
    If $f$ is invariant under a smooth linear action $\theta:G\times V \rightarrow V$ that is not necessarily in natural form, then a conserved quantity can be expressed as $$C(x) = d(\theta^{(x)})_e^*x$$ under some mild conditions, where $*$ denotes the adjoint. Namely, this holds provided that $G$ is a Lie subgroup of $\mathrm{GL}(W)$ for some vector space $W$, $\mathrm{GL}(V)$ and $\mathrm{GL}(W)$ are equipped with inner products, $g^*\in G$, and $\theta_g ^* = \theta_{g^*}$ for all $g\in G$. %We call this a \textsuperscript{$\ast$}linear action. 
    Indeed, mimicking the proof of \cref{cor:conserved}, one can see that $\langle \overline{\partial} f(x) , d(\theta^{(x)})_e w \rangle = \{0\}$ for all $x\in \dom \hspace{.3mm} \partial f$ near $\overline{x}$ and $w\in \mathfrak{g}$. Also, for all $g\in G$ and $x,v\in V$, we have
    $$
        \langle v, \theta^{(x)}(g) \rangle = \langle v, \theta_g(x)\rangle = \langle \theta^*_g(v), x\rangle = \langle \theta_{g^*}(v), x\rangle = \langle \theta^{(v)}(g^*), x\rangle.
    $$
    Thus $\langle v, d(\theta^{(x)})_e w \rangle = \langle d(\theta^{(v)})_ew^*, x\rangle$ for all $w\in \mathfrak{g}$, and both terms are equal to $0$ if $v \in \overline{\partial} f(x)$. The equality can be rewritten as $\langle d(\theta^{(x)})_e\p *v, w \rangle = \langle w^*, d(\theta^{(v)})_e\p * x\rangle$. 
    Since $G$ is adjoint stable, if $w\in G$, then $w\p *\in G$ and so $\langle d(\theta^{(x)})_e\p *v, w\p * \rangle = \langle (w^*)\p *, d(\theta^{(v)})_e\p * x\rangle = \langle w , d(\theta^{(v)})_e\p * x\rangle$. Thus $d(\theta^{(x)})_e\p * v = d(\theta^{(v)})_e\p * x = 0$ for all $v \in \overline{\partial} f(x)$. It follows that, for all $v \in \overline{\partial} f(x)$ and $\alpha \in \R$,  
    \begin{align*}
        C(x+\alpha v) & = d(\theta^{(x+\alpha v)})_e^*(x+\alpha v) \\
        & = [d(\theta^{(x)})_e+\alpha d(\theta^{(v)})_e]^*(x+\alpha v) \\
        & = d(\theta^{(x)})_e^*x+\alpha d(\theta^{(x)})_e^*v + \alpha d(\theta^{(v)})_e^*x+\alpha\p 2 d(\theta^{(v)})_e^*v \\
        & = C(x)+\alpha\p 2 C(v).
    \end{align*}
    Indeed, by linearity $\theta^{(x+\alpha v)}(g) = \theta^{(x)}(g)+\alpha\theta^{(v)}(g)$ for all $g\in G$, and so $d(\theta^{(x+\alpha v)})_e = d(\theta^{(x)})_e+\alpha d(\theta^{(v)})_e$.
One last note: specifying $\langle d(\theta^{(x)})_e\p *v, w \rangle = \langle w^*, d(\theta^{(v)})_e\p * x\rangle$ to $x = v$ yields 
$\langle d(\theta^{(x)})_e^*x - (d(\theta^{(x)})_e^*x)^* , w \rangle = 0$. Since $d(\theta^{(x)})_e^*x \in \mathfrak{g}$ and $G$ is adjoint stable, $(d(\theta^{(x)})_e^*x)^* \in \mathfrak{g}$. Plugging in $w = d(\theta^{(x)})_e^*x - (d(\theta^{(x)})_e^*x)^*$ yields $d(\theta^{(x)})_e^*x \in \mathrm{s}( \mathfrak{g})$. The advantage of this expression in practice is that it avoids vectorizing and having to look for an orthonormal basis.    
\end{remark}

\begin{example}
\label{eg:mf1_non_canonical}
    Let $\theta(A,X,Y) = A (X,Y)$ denote the action from the previous example. We first check that for all $A \in \mathrm{GL}(r,\mathbb{R})$ and $(H,K) \in \mathbb{R}^{m\times r} \times \mathbb{R}^{r\times n}$, we have
    \begin{align*}
        \langle \theta_A(X,Y) , (H,K) \rangle & = \langle (XA,A^{-1}Y) , (H,K) \rangle \\
        & = \langle XA, H \rangle + \langle A^{-1}Y , K \rangle \\
        & = \langle X, HA^T \rangle + \langle Y , (A^{-1})^TK \rangle \\
        & = \langle (X,Y) , (HA^T,(A^T)^{-1}K) \rangle \\
        & = \langle (X,Y) , \theta_{A^T}(H,K) \rangle,
    \end{align*}
    whence $\theta_{A}^* = \theta_{A^T}$. Then we compute $d(\theta^{(X,Y)})_A B = (XB,-A^{-1}BA^{-1}Y)$ for all $A,B\in \mathrm{GL}(r,\mathbb{R})$, from which we deduce
    \begin{align*}
        \langle d(\theta^{(X,Y)})_{I_r} B , (X,Y) \rangle = \langle (XB,-BY) , (X,Y)\rangle = \langle XB ,X \rangle - \langle BY,Y \rangle = \langle B , X^TX - YY^T\rangle
    \end{align*}
    and $C(X,Y) = d(\theta^{(X,Y)})_{I_r}^* (X,Y) = X^TX - YY^T$.
\end{example}
\begin{example}[{Nonnegative matrix factorization}]
    \label{eg:nmf}
     Given a matrix $M \in \mathbb{R}^{m\times n}$, consider the function
    \begin{equation*}
    \begin{array}{cccc}
         f: & \mathbb{R}^{m\times r}\times \mathbb{R}^{r\times n} & \longrightarrow & \overline{\mathbb{R}} \\
        & (X,Y) & \longmapsto & \|XY-M\|_F^2+\delta_{\R^{m\times r}_+\times \R^{r\times n}_+}(X,Y).
    \end{array}
    \end{equation*}
  It is invariant under the action of the positive diagonal group $D_r(\R_+^*)$ on $\R^{m\times r}\times \R^{r\times n}$ defined by $\theta(D,(X,Y))=(XD,D^{-1}Y)$. Since $(X,Y)\mapsto \|XY-M\|_F^2$ is regular and $\R_+^{m\times r}\times \R_+^{r\times n}$ is regular, we can apply \cref{prop:orbital_projection_formula_boundary} and \cref{cor:conserved}. Then, as in \cref{eg:mf1_non_canonical}, for all $B\in D_r(\R)$ we have 
    \begin{align*}
        \langle d(\theta^{(X,Y)})_{I_r} B, (X,Y) \rangle   = \langle B , X^TX - YY^T\rangle=\langle B,P_{D_r(\R)}(X^TX-YY^T) \rangle.
    \end{align*}
 This yields a conserved quantity $C(X,Y)=P_{D_r(\R)}(X^TX-YY^T)$. This recovers the fact that $\mathrm{diag}(X^TX-YY^T)$ is conserved, as was recently proved using adhoc arguments \cite{josz2024prox}. 
\end{example}

\subsection{Conservative fields}
\label{subsec:Conservative fields}

We next give a conservation law for conservative fields. Recall that by \cite[Corollary 1]{bolte2020conservative} if $f$ admits a conservative field $D$, then for all $x\in \rint \dom  f$ we have $\overline{\partial}f(x)\subseteq \mathrm{co}D(x)$. Suppose $f$ is invariant under the natural action of a Lie subgroup $G$ of $\GL(n,\R)$. Applying the chain rule \cite[Exercise 10.7]{rockafellar2009variational} to $f(x) = f(gx)$ at $g^{-1}\overline{x}$ for any $g\in G$ yields 
\begin{equation}
\label{eq:invariant_chain_rule}
    \partial f(g^{-1}\overline{x}) = g^T\partial f(\overline{x}).    
\end{equation}
We thus impose such a condition on $D$ below.
\begin{corollary}
    \label{cor:conserved_conservative_field}
    Let $D:\R^n\rightrightarrows\R^n$ be a definable conservative field with definable potential $f:\R^n\to\overline{\R}$ such that $d(0,D)$ is locally bounded. Suppose $f$ is invariant under the natural action of a definable Lie subgroup $G$ of $\GL(n,\R)$. If for all $g\in G$ and $x\in \R^n$, $D(g^{-1}x)=g^{T}D(x)$, then 
    \begin{equation*}
         C(x) = P_{\mathrm{s}(\mathfrak{g})}(xx^T) 
    \end{equation*}
    is conserved along the differential inclusion
    \begin{equation*}
     \forae t>0  \quad x'(t) \in D(x(t)) .
    \end{equation*}
\end{corollary}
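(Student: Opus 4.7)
The plan is to mirror the proof of \cref{cor:conserved}, replacing the orbital projection formulae for subdifferentials by the conservative-field analogue \cref{prop:projection_formula_CF}, and then using the equivariance $D(g^{-1}x) = g^T D(x)$ to upgrade the ``almost every'' conclusion of that proposition to ``every'' point on each orbit.

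First I would note that for every $\overline{x}\in\dom f$, the orbit $G\overline{x}$ is definable (being the image of the definable group $G$ under the definable map $g\mapsto g\overline{x}$), so \cref{prop:projection_formula_CF} gives a subset $S\subseteq G\overline{x}$ of full Riemannian measure on which $D(y)\subseteq (T_yGy)^\perp=(\fg y)^\perp$. I would also observe that $\dom D$ is $G$-invariant: from $D(g^{-1}x)=g^T D(x)$ and invertibility of $g^T$, $D(g^{-1}x)=\emptyset$ if and only if $D(x)=\emptyset$; hence if $\overline{x}\in \dom D$ then $G\overline{x}\subseteq\dom D$ and the witnesses in $S$ automatically lie in $\dom D$.

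Next I would upgrade the orthogonality to hold at every $z\in G\overline{x}$. Fix a witness $y\in S$, write $y=g\overline{x}$ and $z=h\overline{x}$ for some $g,h\in G$, and note that the equivariance applied to $gh^{-1}$ gives
\begin{equation*}
    D(z)=D\bigl((gh^{-1})^{-1}y\bigr)=(gh^{-1})^T D(y).
\end{equation*}
Thus any $v'\in D(z)$ has the form $v'=(gh^{-1})^T v$ with $v\in D(y)$, and for any $B\in \fg$,
\begin{equation*}
    \langle v',Bz\rangle = \langle v,\,(gh^{-1})B(gh^{-1})^{-1}\cdot y\rangle = \langle v,\,\mathrm{Ad}_{gh^{-1}}(B)\,y\rangle = 0,
\end{equation*}
where I used $z=hg^{-1}y$, the invariance of $\fg$ under the adjoint representation of $G$, and the fact that $v\in D(y)\subseteq (\fg y)^\perp$. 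Therefore $D(z)\perp \fg z$ for every $z\in G\overline{x}$, hence at every $x\in\dom D$.

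Finally I would conclude the conservation law. Let $x:[0,T]\to\dom D$ be absolutely continuous with $x'(t)\in D(x(t))$ a.e. By the previous step, $\langle x'(t),Bx(t)\rangle=0$ for a.e. $t$ and every $B\in \fg$. Since $C$ is smooth (being a linear projection of the quadratic map $x\mapsto xx^T$), $C\circ x$ is absolutely continuous with
\begin{equation*}
    (C\circ x)'(t)=P_{\mathrm{s}(\fg)}\bigl(x'(t)x(t)^T+x(t)x'(t)^T\bigr);
\end{equation*}
taking the inner product against any $B\in \mathrm{s}(\fg)\subseteq \fg$ gives $2\langle Bx(t),x'(t)\rangle=0$, so $(C\circ x)'(t)=0$ a.e. and $C\circ x$ is constant. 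The principal subtlety is the upgrade from ``almost every'' on each orbit to ``every'' point, which depends essentially on the equivariance hypothesis on $D$ together with the $\mathrm{Ad}$-invariance of $\fg$; once that is in place, the rest of the argument is a direct quadratic expansion as in \cref{cor:conserved}.
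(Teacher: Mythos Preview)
Your proposal is correct and follows essentially the same route as the paper: invoke \cref{prop:projection_formula_CF} to get $D(y)\perp\fg y$ almost everywhere on each orbit, then use the equivariance hypothesis $D(g^{-1}x)=g^T D(x)$ together with $\mathrm{Ad}$-invariance of $\fg$ (conjugation $B\mapsto gBg^{-1}$ preserving $\fg$) to propagate this to every point, and finish with the quadratic expansion of $C$ as in \cref{cor:conserved}. Your extra checks (definability of $G\overline{x}$, $G$-invariance of $\dom D$) are sound and make explicit what the paper leaves implicit.
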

\begin{proof}
   Suppose the orbital projection formula in \cref{prop:projection_formula_CF} holds at $x\in G\overline{x}$, namely, $\langle D(x), \fg x\rangle=\{0\}$. For all $g\in G$, we have $$\langle D(g^{-1}x), \fg g^{-1}x\rangle=\langle g^{T}D(x), \fg g^{-1}x\rangle=\langle D(x),g\fg g^{-1}x \rangle=\langle D(x), \fg x \rangle=\{0\}.$$ Indeed, conjugation by $g$ induces an isomorphism of the Lie algebra \cite[Example 7.4(f)]{lee2012smooth}. Thus the projection formula actually holds everywhere in $G\overline{x}$. The remainder of the proof is identical to the one of \cref{cor:conserved}.
\end{proof}
\begin{example}[Deep neural network]
    \label{eg:nn}
    Consider the training loss of a fully connected neural network, where $n \in \mathbb{N}^*$ is the number of data points, $l\in \mathbb{N}^*$ the number of layers, $n_i$ the width of the $i$\textsuperscript{th} layer, $x_1,\hdots, x_n \in \mathbb{R}^{n_0}$ the input data, and $y_1,\hdots,y_n \in \mathbb{R}^{n_l}$ the output data. Let $f: \mathbb{R}^{n_1\times n_0} \times \cdots \times \mathbb{R}^{n_{l} \times n_{l-1}} \times \mathbb{R}^{n_1} \times \cdots \times \mathbb{R}^{n_l} \rightarrow \mathbb{R}$ be defined by
\begin{equation*}
    f(W_1,\hdots,W_l,b_1,\hdots,b_l) = \frac{1}{n}\sum_{i=1}^n |W_l \sigma( \cdots \sigma(W_1x_i+b_1)\cdots) + b_l - y_i|^2
\end{equation*}
where the activation function $\sigma:\mathbb{R}\rightarrow\mathbb{R}$ is positively homogeneous, i.e., $\sigma(tx) = t\sigma(x)$ for all $t>0$ and $x\in \mathbb{R}$. Examples include $\mathrm{ReLU}(x) =\max\{0,x\}$ and $\mathrm{ReLU}_a(x) =\max\{ax,x\}$ for some $0<a<1$. Notice that for any matrix $A$ and positive diagonal matrix $D$ of compatible sizes, we have $\sigma(DA) = D\sigma(A)$. Thus consider the action 
\begin{equation*}
    D (W,b) = (D_1W_1,D_2W_2D_1^{-1},\hdots,D_lW_lD_{l-1}^{-1},D_1b_1,D_2b_2,\hdots,D_{l-1}b_{l-1},b_l)
\end{equation*}
where $D_i\in D_{n_i}(\R_+^*)$. Since $f$ is locally Lipschitz, \cref{cor:conserved} yields the conserved quantity
\begin{equation*}
    c(W,b) = \begin{pmatrix}
        \mathrm{diag}(W_1W_1^T + b_1b_1^T - W_2^TW_2) \\
        \vdots \\
        \text{diag}(W_{l-1}W_{l-1}^T + b_{l-1}b_{l-1}^T - W_l^TW_l)
    \end{pmatrix}.
\end{equation*}
This corroborates the findings in \cite[
C.9.3]{zhao2023symmetries} which however assume $f$ to be smooth. 
 
When the activation $\sigma$ is bijective, as with $\mathrm{ReLU}_a$, the objective function $f$ actually admits more symmetries, but they do not necessarily yield other conserved quantities. With $l=2$ layers, $f$ is invariant under the action $A (W,b) = (\sigma^{-1}(A\sigma(W_1X)),W_2A^{-1},Ab_1,b_2)$ where $A\in\mathrm{GL}(r,\mathbb{R})$. It is locally linear around a point $(\overline{W},\overline{b})$ where all the entries of $\overline{W}_1$ are nonzero, but the symmetric part of the Lie algebra need not be larger (i.e., $\theta^*_g \neq \theta_{g^*}$ in the setting of Remark \ref{rem:conserved}). 

If one constraints the weights to be nonnegative, as in \cite{chorowski2014learning}, then the same quantity is conserved around points where none of the rows nor columns of the weights $W_i$ are equal to zero, nor are any of the entries of the bias terms $b_i$. This holds because the action is free at such points. Note that here we may not use regularity of $f$, which is violated due to the ReLU activation. Simply consider $f(W,b)=( W_2\mathrm{ReLU}(W_1+b_1)+b_2-1 )^2$ where $W_1,W_2,b_1,b_2 \in \mathbb{R}$. We have $f(\epsilon_1,1,\epsilon_1,1-\epsilon_2) = (2\mathrm{ReLU}(\epsilon_1) - \epsilon_2)^2$, which fails to be regular when $\epsilon_1 = 0$ and $\epsilon_2>0$. 

Suppose $D_i$ is a conservative field for $$f_i(W,b)=|W_l \sigma( \cdots \sigma(W_1x_i+b_1)\cdots) + b_l - y_i|^2.$$ 
If each $D_i$ is compatible with the action, then so is their sum as well as the convex hull of the sum. Thus, by \cref{cor:conserved_conservative_field}, $c(W,b)$ is conserved along the differential inclusion $$\forae t>0,~ (W'(t),b'(t))\in \mathrm{co}\left(\frac{1}{n}\sum_{i=1}^nD_i(W(t),b(t))\right).$$
\end{example}

\section{Discrete subgradient dynamics}
\label{sec:Discrete subgradient dynamics}

The orbital projection formulae and the conserved quantity derived above can be used to detect instability in discrete subgradient dynamics. The subgradient method with constant step size $\alpha>0$ for minimizing a function $f:\R\p n \to \R$ consists of choosing an initial point $x_0 \in \R\p n$ and generating a sequence of iterates $\{x_k\}_{k \in \N} \subseteq \R\p n$ such that
$$\forall k \in \N, ~~~ x_{k+1} \in x_k - \alpha \partial f(x_k).$$
Recall the following definition.
\begin{definition}{\cite[Definition 2.2]{josz2024sufficient}}
\label{def:strong_unstable}
A point $\overline{x} \in \mathbb{R}^n$ is a strongly unstable for $f:\mathbb{R}^n\rightarrow\mathbb{R}$ if there exists $\epsilon>0$ such that for all but finitely many constant step sizes $\alpha>0$ and for almost every initial point in $B_{\epsilon}(\overline{x})$, at least one of the iterates of the subgradient method does not belong to $B_{\epsilon}(\overline{x})$.
\end{definition}

We will employ the same geometric condition \cite[Definition 3.1 (i)]{mordukhovich2015higher} as in the previously known instability result \cite[Theorem 2.9]{josz2024sufficient}. It is stated below.

\begin{definition}
\label{def:submetric} A set-valued mapping $F:\mathbb{R}^n\rightrightarrows\mathbb{R}^m$ is metrically $\eta$-subregular at $(\overline{x},\overline{y}) \in \gph F$ with $\eta>0$ if there exist $\kappa,r>0$ such that $d(x,F^{-1}(\overline{y}))\leq \kappa (d(\overline{y},F(x))^\eta$ for all $x\in B_r(\overline{x})$.
\end{definition}

%A related and more palatable notion is as follows \cite{davis2022proximal}.

%\begin{definition}
%\label{def:sharp} A function $f:\mathbb{R}^n \rightarrow %\overline{\mathbb{R}}$ is sharp at $\overline{x} \in \mathbb{R}^n$ if % there exists $\kappa>0$ such that for all $x \in \mathbb{R}^n %\setminus (\partial f)^{-1}(0)$ near $\overline{x}$, we have %$d(0,\partial f(x)) \geq \kappa$.
%\end{definition}

%If $f$ is sharp at $\overline{x}$, then $\partial f$ is metrically %$\eta$-subregular at $(\overline{x},0)$ for any $\eta>1$.

% It will be helpful to determine the update rule of the conserved quantity.

% \begin{proposition}
%     \label{prop:update}
%     Let $f:\mathbb{R}^n\rightarrow \mathbb{R}$ be locally Lipschitz and invariant under the natural action of a Lie subgroup $G$ of $\mathrm{GL}(n,\mathbb{R})$, and let $C(x) = P_{\mathrm{s}(\mathfrak{g})}(xx^T)$. If $x \in \mathbb{R}^n$ and $\alpha\in \mathbb{R}$, then 
%     \begin{equation*}
%     \forall v\in\partial f(x),\quad     {C(x - \alpha v) = C(x) + \alpha^2 C(v)}.
%     \end{equation*}
% \end{proposition}
% \begin{proof}
% Since $\langle \partial f(x),\fg x   \rangle=\{0\}$ for all $v\in \partial f(x)$ by the orbital projection formula in \cref{prop:projection_formula_ll}, one has $P_{s(\fg)}(xv^T)=P_{s(\fg)}(vx^T)=0$. Hence $C(x-\alpha v)=P_{s(\fg)}[(x-\alpha v)(x-\alpha v)^T] 
%   =P_{s(\fg)} (xx^T) -\alpha  P_{s(\fg)} (xv^T) -\alpha  P_{s(\fg)} (vx^T)+\alpha^2 P_{s(\fg)} (vv^T) 
%       =P_{s(\fg)}  (xx^T)+\alpha^2 P_{s(\fg)} (vv^T)=C(x)+\alpha^2C(v)$.
% \end{proof}

A subset $S$ of $\mathbb{R}^n$ is disconnected if there exist nonempty disjoint relatively open (in $S$) sets $A$ and $B$ such that $S = A\cup B$. It is connected if it is not disconnected. A maximal connected subset of $S$ is called a connected component of $S$. Given $f:\R^n\to\R$ and $\ell \in {\mathbb{R}}$, let $[f = \ell] = \{ x \in \mathbb{R}^n : f(x) = \ell \}$ and $[f = \ell]_{\overline{x}}$ denote the connected component of $[f = \ell]$  containing a point $\overline{x}\in \mathbb{R}^n$. In contrast to \cite[Theorem 2.9]{josz2024sufficient}, in the result below, one need not propose a Chetaev function nor check the Verdier condition.

\begin{theorem}
\label{thm:unstable}
Let $f:\mathbb{R}^n\rightarrow \mathbb{R}$ be definable, locally Lipschitz, and invariant under the natural action of a Lie subgroup $G$ of $\mathrm{GL}(n,\mathbb{R})$, and let $C(x) = P_{\mathrm{s}(\mathfrak{g})}(xx^T)$. Fix a point $\overline{x}\in\R^n$. Suppose $[f = f(\overline{x})]_{\overline{x}} = G\overline{x}$ and $\partial f$ is metrically $\eta$-subregular at $(\overline{x},0)$ with $\eta>1$. If $$ \forall u \in \partial f(x), ~ \forall v \in \partial f(y), \quad    \langle C(u) , C(v) \rangle > 0$$
for all $x,y\in \mathbb{R}^n \setminus G\overline{x}$ near $\overline{x}$, then $\overline{x}$ is strongly unstable.
\end{theorem}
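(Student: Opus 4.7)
The strategy is to reduce the claim to the pre\-existing instability criterion \cite[Theorem 2.9]{josz2024sufficient}. That theorem concludes strong instability at $\overline{x}$ once one provides: (a) a $C^2$ embedded submanifold $M$ locally equal to the set of local minima near $\overline{x}$; (b) the perturbed projection formula along $M$; (c) metric $\eta$-subregularity of $\partial f$ at $(\overline{x},0)$ with $\eta>1$; (d) a suitable Chetaev function. Items (c) is given and (a),(b) are routine consequences of the background results, so the heart of the proof is to manufacture the Chetaev function from the hypothesis on $C$.

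\textbf{Step 1 (Geometry at $\overline{x}$).} The identity $[f=f(\overline x)]_{\overline x}=G\overline x$ exhibits the orbit as the connected component of a level set of the definable function $f$, so $G\overline x$ is definable. Since the natural action is smooth, \cref{fact:embedded_orbit} yields that $G\overline x$ is a $C^\infty$ embedded submanifold of $\R^n$. Moreover, metric $\eta$-subregularity with $\eta>1$ forces $0\notin\partial f(x)$ for any $x$ off the orbit near $\overline x$ (otherwise $d(x,G\overline x)\leq\kappa\cdot 0=0$), so $G\overline x$ is precisely the set of critical points, hence of local minima, of $f$ in a neighborhood of $\overline x$. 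Finally, \cref{prop:perturbed_projection_formula_ll} supplies the perturbed projection formula along $G\overline x$, since $f$ is locally Lipschitz and $G$-invariant.

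\textbf{Step 2 (Chetaev function from $C$).} By \cref{cor:conserved}, the discrete iterates $x_{k+1}=x_k-\alpha v_k$ with $v_k\in\partial f(x_k)$ satisfy
$$C(x_{k+1})-C(x_k)=\alpha^2 C(v_k).$$
The hypothesis states that the set
$$S_\epsilon:=\{C(v):\ v\in\partial f(x),\ x\in B_\epsilon(\overline x)\setminus G\overline x\}\subseteq \mathrm{s}(\fg)$$
consists of vectors with pairwise strictly positive inner products. In particular $0\notin S_\epsilon$, and a compactness/separation argument on the unit sphere of $\mathrm{s}(\fg)$, applied to normalized elements of $S_\epsilon$ for small enough $\epsilon$, produces a direction $Y\in\mathrm{s}(\fg)$ and a constant $c>0$ with $\langle s,Y\rangle\geq c\|s\|$ for all $s\in S_\epsilon$. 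Setting $\phi(x):=\langle C(x),Y\rangle$, one obtains
$$\phi(x_{k+1})-\phi(x_k)=\alpha^2\langle C(v_k),Y\rangle\geq c\alpha^2\|C(v_k)\|,$$
which is strictly positive whenever $x_k$ lies off $G\overline x$. Thus $\phi$ plays the role of the Chetaev function: it is nondecreasing along iterates, and strictly increasing as long as the orbit has not been reached.

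\textbf{Main obstacle.} The most delicate point is turning the pointwise positivity into the \emph{quantitative} Chetaev estimate required in \cite[Theorem 2.9]{josz2024sufficient}, namely a lower bound on $\phi(x_{k+1})-\phi(x_k)$ in terms of $d(x_k,G\overline x)$. Metric $\eta$-subregularity gives $\|v_k\|\geq(d(x_k,G\overline x)/\kappa)^{1/\eta}$, but $C$ is quadratic and generically only $\|C(v)\|\leq\|v\|^2$; one needs a matching lower bound $\|C(v)\|\gtrsim \|v\|^\beta$ for some $\beta$, which should follow from the \L{}ojasiewicz inequality applied to the definable function $(x,v)\mapsto \|C(v)\|$ restricted to the graph of $\partial f$, exploiting that the zero-set of this function stays inside $\gph(\partial f|_{G\overline x})$. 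Combining both inequalities yields $\phi(x_{k+1})-\phi(x_k)\geq c'\alpha^2 d(x_k,G\overline x)^\gamma$ for some $\gamma>0$. Boundedness of $\phi$ on $B_\epsilon(\overline x)$ then forces $d(x_k,G\overline x)\to 0$, which, via the perturbed projection formula from Step 1 and the argument template of \cite[Theorem 2.9]{josz2024sufficient}, is incompatible with remaining in $B_\epsilon(\overline x)$ for almost every initial condition and all but finitely many step sizes $\alpha$. The exceptional measure-zero set consists of initial points already on $G\overline x$, which is a lower-dimensional embedded submanifold by Step~1.
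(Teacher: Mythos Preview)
Your overall architecture is exactly the paper's: verify the geometric hypotheses of \cite[Theorem 2.9]{josz2024sufficient} via \cref{fact:embedded_orbit} and \cref{prop:perturbed_projection_formula_ll}, then build a Chetaev function of the form $\phi(x)=\langle C(x),Y\rangle$ and use the identity $C(x_{k+1})=C(x_k)+\alpha^2 C(v_k)$ from \cref{cor:conserved}. Your separation argument in Step~2 is in fact valid (pairwise positivity on $S_\epsilon$ forces pairwise nonnegativity on the closure of the normalized set, so $0$ is not in its convex hull and a strict separating $Y$ exists), though the paper bypasses it entirely.

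The genuine gap is the intermediate inequality you propose in the ``Main obstacle'' paragraph. The bound $\|C(v)\|\gtrsim\|v\|^\beta$ cannot come out of \L{}ojasiewicz applied to $(x,v)\mapsto\|C(v)\|$ on $\gph\partial f$: the zero set of $v\mapsto C(v)=P_{\mathrm{s}(\fg)}(vv^T)$ is the cone $\{v:vv^T\perp \mathrm{s}(\fg)\}$, typically much larger than $\{0\}$, so no inequality purely in $\|v\|$ is available. What \L{}ojasiewicz actually gives, using that the zero set on $\gph\partial f$ sits over $G\overline{x}$, is $\|C(v)\|\geq\varphi\big(d((x,v),Z)\big)\geq\varphi\big(d(x,G\overline{x})\big)$ directly; the detour through $\|v\|$ and the metric-subregularity lower bound on $\|v_k\|$ is then superfluous for the Chetaev estimate. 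With this correction your argument closes.

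The paper's route is shorter still. It fixes $w=C(v_0)$ for a single $v_0\in\partial f(y_0)$ with $y_0\notin G\overline{x}$, so that $\langle C(v),w\rangle>0$ by hypothesis, and applies the monotonicity theorem to the univariate definable function
\[
\mu(t)=\inf\big\{\langle C(v),w\rangle:\ v\in\partial f(x),\ x\in B_r(\overline{x}),\ d(x,G\overline{x})=t\big\},
\]
producing an increasing $\nu$ with $\nu(0)=0$ and $\langle C(v),w\rangle\geq\nu(d(x,G\overline{x}))$. This delivers the Chetaev increment $\phi(x_{k+1})-\phi(x_k)\geq\alpha^2\nu(d(x_k,G\overline{x}))$ in one stroke, with no separation step, no \L{}ojasiewicz, and no use of metric subregularity beyond what \cite[Theorem 2.9]{josz2024sufficient} itself requires (and the observation $\dim G\overline{x}<n$). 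A minor aside: what \cite[Theorem 2.9]{josz2024sufficient} asks for is that the manifold consist of critical points, which you establish; your phrase ``hence of local minima'' is unwarranted and unnecessary.
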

\begin{proof}
    If $\overline{x}$ is not a critical point, then $\overline{x}$ is strongly unstable by \cite[Theorem 1]{josz2023lyapunov}. If $\overline{x}$ is a critical point, then we check the assumptions of \cite[Theorem 2.9]{josz2024sufficient}, one by one. Since the orbit $[f = f(\overline{x})]_{\overline{x}} = G\overline{x}$ is a definable set, by \cref{fact:embedded_orbit} it is a $C\p 2$ embedded submanifold of $\mathbb{R}^n$, whose dimension is less than $n$ by metric $\eta$-subregularily. By recalling \eqref{eq:invariant_chain_rule}, we see that $G\overline{x}$ is composed of critical points of $f$. Since $f$ is locally Lipschitz, by \cref{prop:perturbed_projection_formula_ll} the perturbed projection formula holds near $\overline{x}$ (referred to as Verdier condition in \cite[Definition 2.6]{josz2024sufficient}). Suppose the inequality in the prompt holds in $B_r(\overline{x}) \setminus G\overline{x}$ for some $r>0$. Let $y \in B_r(\overline{x}) \setminus G\overline{x}$ and $w \in c(\partial f(y))$. By assumption, we have   
    \begin{equation*}
        \forall x \in B_r(\overline{x}) \setminus G\overline{x}, ~ \forall v \in \partial f(x), ~~~ \langle C(v) , w \rangle > 0.
    \end{equation*}
    Let $\mu:(0,r]\rightarrow \mathbb{R}$ be defined by
    \begin{equation*}
        \mu(t) = \inf \{ \langle C(v) , w \rangle : v\in \partial f(x), ~ x\in B_r(\overline{x}), ~ d(x,G\overline{x}) = t \}.
    \end{equation*}
    By \cite[Theorem 9.13]{rockafellar2009variational} and \cite[Proposition 3 p. 42]{aubin1984differential}, the set of feasible subgradients $v$ is compact. Since the objective function is positive for all feasible $v$, it follows that $\mu$ is positive on its domain. Since $\mu$ is definable, by the monotonicity theorem \cite[(1.2) p. 43]{van1998tame}, there exists $r_0\in(0,r)$ such that $\mu$ is $C^1$ and monotone on $(0,r_0)$. Thus there exists a $C^1$ definable function $\nu:[0,r_0]\rightarrow \mathbb{R}$ such that $\nu(0) = 0$, $\nu'(t)>0$, and $\mu(t) \geq \nu(t)$ for all $t \in (0,r_0]$. Indeed, if $\mu$ is strictly increasing it suffices to take $\nu$ to be equal to $\mu$ on $(0,r_0)$ up to an additive constant potentially, and possibly after reducing $r_0$; otherwise one can take a linear function. Observe that $\inf_{[r_0,r]} \mu$ is equal to
    \begin{equation*}
        \inf \{ \langle C(v) , w \rangle : s\in \partial f(x), ~ x\in B_r(\overline{x}), ~ r_0 \leq d(x,G\overline{x}) \leq r \}
    \end{equation*}
    and is therefore positive. Thus $\mu(t) \geq \nu(t)$ and $\nu'(t)>0$ for all $t\in(0,r]$ after taking an affine extension of $\nu$, and possibly after multiplying $\nu$ by a positive constant. If $x \in B_r(\overline{x}) \setminus G\overline{x}$, then $d(x,G\overline{x}) \leq |x-\overline{x}| \leq r$ and hence $\mu(d(x,G\overline{x})) \geq \nu(d(x,G\overline{x}))$. As a result,
    \begin{equation*}
        \forall x\in B_r(\overline{x}) \setminus G\overline{x}, ~~ \forall v \in \partial f(x), ~~~ \langle C(v) , w \rangle \geq \nu(d(x,G\overline{x})).
    \end{equation*}
    Let $\{x_k\}_{k\in \mathbb{N}} \subseteq B_r(\overline{x}) \setminus G\overline{x}$ be such that $x_{k+1} = x_k - \alpha v_k$ for some $\alpha>0$ and $v_k \in \partial f(x_k)$. By \cref{cor:conserved}, we have
    \begin{equation*}
        \langle C(x_{k+1}),w \rangle - \langle C(x_k),w \rangle = \alpha^2 \langle C(v_k),w \rangle \geq \alpha^2 \nu(d(x_k,G\overline{x})).
    \end{equation*}
    Since $\nu$ is strictly increasing, the continuous function $ \mathbb{R}^n\ni x \mapsto \langle C(x),w\rangle \in \mathbb{R}$ is a Chetaev function and $\overline{x}$ is strongly unstable by \cite[Theorem 2.9]{josz2024sufficient}.
\end{proof}

\cref{thm:unstable} is meant to demonstrate the potential of the topics explored in this paper for the study of algorithms. We leave the study of algorithms with symmetry for future work.

\section*{Acknowledgments}
I would like to thank the Co-Editor, Associate Editor, and reviewers for their valuable feedback and patience in the review process. I am greatly indebted to Wenqing Ouyang for his careful reading and numerous comments that helped improve the paper. I would also like to thank Théodore Fougereux for fruitful discussions.

\section{Appendix}

\begin{proof}[Proof of Fact \ref{fact:embedded_orbit}]
Let $\theta:G\times \R^n\to \R^n$ denote the action and $\overline{x}\in \R^n$. Since $G\overline{x}$ is definable, by \cite[Theorem 4.8]{van1996geometric} it admits a $C^k$ stratification with finitely many definable strata. Let $X$ denote a stratum of maximum dimension. Consider some $x\in X$. There exists a neighborhood $U$ of $x$ in $\R^n$ such that $G\overline{x}\cap U=X\cap U$, otherwise $X\cap \overline{Y}$ is nonempty for some stratum $Y$. In that case, $X\subseteq \overline{Y}\setminus Y$, yielding the contradiction $\dim X <\dim Y$ by \cite[Chapter 4, Theorem 1.8]{van1998tame}. By the definition of $C^k$ submanifolds in \cite[Chapter 1, Section 1]{hirsch2012differential} and \cite[Chapter 1, Theorem 3.1]{hirsch2012differential}, $G\overline{x}\cap U$ is a level set of a $C^k$ submersion $\Phi:U\to \R^{n-p}$ where $p=\dim X$ after possibly reducing $U$. Let $y=gx$ for some $g\in G$. The set $gU$ is a neighborhood of $y$ in $\R^n$ such that $Gx\cap gU=g(Gx\cap U)$ is a level set of the $C^k$ submersion $\Phi\circ \theta_{g^{-1}}:gU\to \R^{m-p}$. By \cite[Chapter 1, Section 1]{hirsch2012differential}, $G\overline{x}$ is a $p$-dimensional $C^k$ embedded submanifold of $\R^n$.
\end{proof}

\begin{proof}[Proof of Fact \ref{fact:orbit_rank}]
Let $x=\theta(h,\overline{x})$. We have $\theta^{(x)}(g)=\theta(g,x)=\theta(g,\theta(h,\overline{x}))=\theta(gh,\overline{x}) = \theta^{(\overline{x})}(gh)=\theta^{(\overline{x})}\circ R_h (g)$, where $R_h:G\ni g\mapsto gh\in G$. Since $R_h\circ R_h^{-1}=\mathrm{Id}_G$, the mapping $R_h$ is a diffeomorphism. By the chain rule \cite[Proposition 3.6(b)]{lee2012smooth}, we have $d(\theta^{(x)})_e=d(\theta^{(\overline{x})})_h\circ d(R_h)_e$. Thus $\rank d(\theta^{(x)})_e=\rank d(\theta^{(\overline{x})})_h=\rank d(\theta^{(\overline{x})})_e$. The second equality follows from the equivariant rank theorem \cite[Theorem 7.25]{lee2012smooth}. Indeed, $\theta^{(\overline{x})}:G \to M$ is equivariant with respect to the smooth transitive action of $G$ on itself and $\theta$. 
\end{proof}

\begin{proof}[Proof of Fact \ref{fact:injective}]
Since $\bar{A}$ is injective, so are nearby $A$ and $B$, namely $\mathrm{Ker}\bar{A} = \mathrm{Ker}A = \mathrm{Ker}B = \{0\}$. By definition
\begin{align*}
    d(\mathrm{Im}A,\mathrm{Im}B) & = \sup_{p \in \mathrm{Im}A, |p|=1} \inf_{q \in \mathrm{Im}B} |p-q|\\
    & = \sup_{x \in \mathbb{R}^n \setminus \mathrm{Ker}A} \inf_{q \in \mathrm{Im}B}  \left|\frac{Ax}{|Ax|} - q \right|  \\
    & \leq  \sup_{x \neq 0} \left|\frac{Ax}{|Ax|} - B\frac{x}{|Ax|} \right| \\
    & = \sup_{x \neq 0} \frac{|Ax - Bx|}{|Ax|} \\
    & = \sup_{x \neq 0} \frac{|Ax - Bx|}{|\bar{A}x|} \frac{|\bar{A}x|}{|Ax|} \\
    & = \sup_{x \neq 0} \frac{|(A-B)x|}{|\bar{A}x|} \left( \frac{|\bar{A}x|-|Ax|}{|Ax|} + 1 \right) \\
    & \leq  \sup_{x \neq 0} \frac{|(A-B)x|}{|\bar{A}x|} \left( \frac{|\bar{A}x-Ax|}{|Ax|} + 1 \right) \\
    & \leq  |A-B| \sup_{x \neq 0} \frac{|x|}{|\bar{A}x|} \left( \frac{|\bar{A}-A||x|}{|Ax|} + 1 \right) \\
    & = |A-B| \sup_{|x|=1} \frac{1}{|\bar{A}x|} \left(\frac{|\bar{A}-A|}{|Ax|} + 1 \right) \\
    & \leq  \frac{|A-B|}{\inf\limits_{|x|=1} |\bar{A}x|} \left(\frac{|\bar{A}-A|}{\inf\limits_{|x|=1} |Ax|} + 1 \right) \\
    & \leq  \frac{|A-B|}{\inf\limits_{|x|=1} |\bar{A}x|} \left( \frac{2|\bar{A}-A|}{\inf\limits_{|x|=1} |\bar{A}x|} + 1 \right) \\
    & \leq  \frac{2|A-B|}{\inf\limits_{|x|=1} |\bar{A}x|}
\end{align*}
where in the two last inequalities we assume that $|A-\bar{A}| \leq  \inf|_{x|=1} |\bar{A}x|/2$. This is done to ensure that $|\bar{A}y|-|Ay| \leq  |(\bar{A}-A)y| \leq  \inf|_{x|=1} |\bar{A}x|/2$ for all $|y|=1$, whence $|\bar{A}y| - \inf|_{x|=1} |\bar{A}x|/2 \leq  |Ay|$ and $\inf|_{y|=1} |\bar{A}y|/2 = \inf|_{y|=1}|\bar{A}y| - \inf|_{x|=1} |\bar{A}x|/2 \leq  \inf|_{y|=1}|Ay|$.
\end{proof}

\begin{proof}[Proof of Fact \ref{fact:embedded_tangent}]
By \cite[Proposition 5.16]{lee2012smooth}, there exists a neighborhood $U$ of $\overline{x}$ in $\mathbb{R}^n$ such that $U\cap M$ is a level set of a $C\p 2$ smooth submersion $\Phi:U\rightarrow \mathbb{R}^{n-k}$ where $k$ is the dimension of $M$. Let $D \Phi(x)$ denote the Jacobian of $\Phi$ at $x \in U$ defined by 
    \begin{equation*}
        D \Phi(x)_{ij} = \frac{\partial \Phi_i}{\partial x_j}(x).
    \end{equation*}
    Since $N_x M = \mathrm{Im} D \Phi(x)^T$ near $\overline{x}$ by \cite[Proposition 5.38]{lee2012smooth} and $D \Phi(\overline{x})^T$ is injective, \cref{fact:injective} ensures that 
    \begin{align*}
        d(T_xM,T_yM) & = d(N_x M,N_y M) \\
        & = d(\mathrm{Im} D \Phi(x)^T,\mathrm{Im} D \Phi(y)^T) \\
        & \leq  C|D \Phi(x)^T - D \Phi(y)^T| \\
        & = C\sup_{|u|=1} |(D \Phi(x) - D \Phi(y))^Tu| \\
        & = C\sup_{|u|=1} \sqrt{\sum_{i=1}^n \langle \nabla \Phi_i(x) - \nabla \Phi_i(y), u\rangle^2} \\
        & \leq  C \sqrt{\sum_{i=1}^n |\nabla \Phi_i(x) - \nabla \Phi_i(y)|^2} \\
        & \leq  C \sqrt{\sum_{i=1}^n L^2|x - y|^2} \\
        & \leq  CL\sqrt{n}|x-y|
    \end{align*}
    for some $C>0$. The existence of a constant $L>0$ is due to the mean value theorem and the fact that $D\Phi$ is $C\p 1$ \cite[Theorem 5.19]{rudin1964principles} (see also \cite[Théorème C.12]{gilbert2021fragments}).
\end{proof}

\begin{proof}[Proof of Fact \ref{fact:verdier}]
Let $\cX$ be a Verdier stratification of $\gph f$. We seek to show that $\widetilde \cX=\{\pi(X):~X\in \cX\}$ is a Verdier stratification of $\dom  f$,  where $\pi:\R^{n+1}\to \R^n$ is the projection onto the first $n$ variables. 
        
        First, $\widetilde \cX$ is a locally finite partition of $\dom f$. Let $\cX = \{X_i\}_{i \in I}$ for some index set $I$. Since $\cX$ is a partition of $\gph f$, $\bigcup_i X_i = \gph f$. Thus $\bigcup_i \pi(X_i) = \pi(\bigcup_i X_i) = \pi( \gph f) = \dom f$. Let $X,Y \in \cX$ such that $\pi(X) \cap \pi(Y) \neq \emptyset$. There exist $(x,\alpha) \in X$ and $(y,\beta) \in Y$ such that $\pi(x,\alpha) = \pi(y,\beta)$, i.e., $x=y$. But since $X,Y \subseteq \gph f$, $f(x) = \alpha$ and $f(y) = \beta$, so that $\alpha = \beta$. Hence $X \cap Y \neq \emptyset$ and thus $X = Y$. In particular, $\pi(X) = \pi(Y)$. Next, consider a point $\overline{x} \in\dom f$. Let $U$ be a neighborhood of $(\overline{x},f(\overline{x})) \in \gph f$ in $\R\p{n+1}$ such that $\{ i \in I : U \cap X_i \neq \emptyset\}$ is finite. Since $f$ is continuous on $\dom f$, $V = (\mathrm{Id}_{\R\p n}, f)\p{-1}(U)$ is open in $\dom f$. Since it contains $\overline{x}$, it is a neighborhood of $\overline{x}$ in $\dom f$. Suppose $V \cap \pi(X_i) \neq \emptyset$ for some $i\in I$. Then there exist $x \in V$ and $(y,f(y)) \in X_i$ such that $x=\pi(y,f(y))=y$. Thus $(x,f(x)) \in U \cap X_i$. As a result, $\{ i \in I : V \cap \pi(X_i) \neq \emptyset\} = \{ i \in I : U \cap X_i \neq \emptyset\}$ is finite.

        Second, if $X \in \cX$, then $\pi(X)$ is a $C\p k$ embedded submanifold of $\R\p n$. Indeed, the restriction $\pi|_{X} = \pi \circ \iota_X$ is a $C\p k$ smooth embedding, as we next show, implying that it is diffeomorphic onto its image and that its image is embedded \cite[Proposition 5.2]{lee2012smooth}. It is injective since if $\pi|_{X}(x,f(x)) = \pi|_{X}(y,f(y))$, then $x=y$ and $(x,f(x)) = (y,f(y))$. The restriction to the codomain $\widetilde{\pi|_{X}}:X\to \pi(X)$ remains continuous when equipping $\pi(X)$ with the subspace topology \cite[Corollary 3.10(b)]{lee2010introduction}. The inclusion map $\iota_X$ is a $C\p k$ smooth embedding since $X$ is a $C\p k$ embedded submanifold of $\R\p{n+1}$ \cite[Theorem 5.27]{lee2012smooth}. In particular, $\iota_X$ is homeomorphic onto its image, so it is an open map. Since $\pi$ is a smooth submersion, by \cite[Proposition 4.28]{lee2012smooth} it is also open. Thus the composition $\widetilde{\pi|_{X}}$ is open and $\pi|_{X}$ is a topological embedding. It remains to prove that $\pi|_{X}$ is a smooth immersion. Fix $p\in X$. By the chain rule \cite[Proposition 3.6]{lee2012smooth} an linearity of $\pi$, $d(\pi|_{X})_{p} = d\pi_{p} \circ d(\iota_X)_{p} = \pi \circ d(\iota_X)_{p}$. Let $v \in T_{p} X$ be such that $d(\pi|_{X})_{p}(v) = 0$. Set $(w,\ell) = d(\iota_X)_{p}(v) \in \R\p{n} \times \R$ and observe that $\pi(w,\ell) = w = 0$. By \cite[Proposition 5.35]{lee2012smooth}, there exist a smooth curve $\gamma:J\to \R\p{n+1}$, denoted $\gamma(t) = (x(t),\alpha(t))$, with $0\in J$, $\gamma(0) = p = (\overline{x},f(\overline{x}))$, and $\gamma'(0) = (0,\ell)$. Since $f$ is locally Lipschitz on its domain, there exist $L>0$ and a neighborhood $J'$ of $0$ in $J$ such that $|\alpha(t)-\alpha(0)| = |f(x(t))-f(\overline{x})|\leq L|x(t)-\overline{x}|$ for all $t\in J'$. Dividing by $|t| \neq 0$ and passing to the limit yields $|\ell| \leq L \times 0 = 0$. To sum up, $(w,\ell) = 0$ and since $\iota_X$ is a smooth immersion, $v = 0$.
        %If $t\neq 0$, then $(0,\hdots,0,1) \in d(\iota_X)_{p}(T_{p} X) \cong T_{p} X$, which is forbidden since $\cX$ is a nonvertical stratification. 

        Third, let $X,Y \in \cX$ be such that $\pi(X) \cap \overline{\pi(Y)} \neq \emptyset$. There exist $(x,\alpha) \in X$ and $(y_k,\beta_k) \in Y$ such that $\pi(y_k,\beta_k) \to \pi(x,\alpha)$, i.e., $y_k \to x$. Since $f$ is continuous on its domain, $\beta_k = f(y_k) \to f(x) = \alpha$. Thus $X \cap \overline{Y} \neq \emptyset$. As a result, $X \subseteq \overline{Y}$ and $\pi(X) \subseteq \pi(\overline{Y}) \subseteq \overline{\pi(Y)}$, where the second inclusion holds because $\pi$ is continuous.

        Fourth, let $X,Y \in \cX$ be such that $\pi(X) \subseteq \overline{\pi(Y)} \setminus \pi(Y)$ and $\overline{x} \in \pi(X)$. Since $\pi(X) \neq \emptyset$, we have $\pi(X) \cap \overline{\pi(Y)}\neq \emptyset$. From the previous paragraph, we know that $X \subseteq \overline{Y}$, but $X \neq Y$, so $(\overline{x},f(\overline{x}))\in X\subseteq \overline{Y}\setminus Y$. Using the Verdier condition, we find that
        \begin{subequations}
            \begin{align}
            d(T_x \pi(X),T_y \pi(Y)) & = d(\pi(T_{(x,f(x))}X),\pi(T_{(y,f(y))}Y)) \label{eq:verdier_a} \\
            & \leq \sqrt{1+L\p 2} \hspace{.4mm} d(T_{(x,f(x))}X,T_{(y,f(y))}Y) \label{eq:verdier_b} \\
            & \leq \sqrt{1+L\p 2} C |(x,f(x))-(y,f(y))| \label{eq:verdier_c}\\
            & \leq (1+L\p 2) C|x-y| \label{eq:verdier_d}
        \end{align}
        \end{subequations}
        for $x\in \pi(X)$ and $y\in \pi(Y)$ near $\overline{x}$. Indeed, \eqref{eq:verdier_a} is due to $\pi(T_{(x,f(x))}X) = T_x \pi(X)$. This equality holds because as a smooth embedding, $\widetilde{\pi|_{X}}$ is diffeomorphic onto its image \cite[Proposition 5.2]{lee2012smooth}, whence $d(\widetilde{\pi|_{X}})_p (T_pX) = T_p\pi(X)$ for all $p\in X$. Also, $\pi \circ \iota_X = \iota_{\pi(X)} \circ \widetilde{\pi|_{X}}$, so that $\pi \circ d(\iota_X)_p = d(\iota_{\pi(X)})_p \circ d(\widetilde{\pi|_{X}})_p$. With the usual identifications, it follows that $\pi(v) = d(\widetilde{\pi|_{X}})_p(v)$ for all $v \in T_p X$. To see why \eqref{eq:verdier_b} holds, note that the supremum is reached in the definition of the distance. Thus there exists $(v,\alpha) \in T_{(x,f(x))}X$ with $|\pi(v,\alpha)|=|v|=1$ such that for all $(w,\beta) \in T_{(y,f(y))}Y$, we have 
        \begin{align*}
            d(\pi(T_{(x,f(x))}X),\pi(T_{(y,f(y))}Y)) & = d(v,\pi(T_{(y,f(y))}Y)) \\
            & \leq |v-w| \\
            & \leq |(v,\alpha)-(w,\beta)| \\
            & \leq |(v,\alpha)| \left|\frac{(v,\alpha)}{|(v,\alpha)|}-\frac{(w,\beta)}{|(v,\alpha)|}\right|.
        \end{align*}
        and thus 
        $$d(v,\pi(T_{(y,f(y))}Y)) \leq |(v,\alpha)| \hspace{.6mm}  d\left(\frac{(v,\alpha)}{|(v,\alpha)|},T_{(y,f(y))}Y\right) \leq |(v,\alpha)| \hspace{.6mm} d(T_{(x,f(x))}X,T_{(y,f(y))}Y).$$
As seen previously, using a curve we find that $|\alpha| \leq L|v|= L$ for some $L>0$. Thus $|(v,\alpha)|\p 2 \leq 1 + L\p 2$. The Verdier condition implies the existence of a constant $C>0$ in \eqref{eq:verdier_c}. Finally, \eqref{eq:verdier_d} holds because $|f(x)-f(y)| \leq L |x-y|$ near $\overline{x}$.

Fifth, let $X \in \cX$. As shown above, $\widetilde{\pi|_{X}}:X \to \pi(X)$ is a diffeomorphism and $(\widetilde{\pi|_{X}})\p {-1}(x) = (x,f(x))$ for all $x\in \pi(X)$. Thus $f|_{\pi(X)} = \widehat{\pi} \circ (\widetilde{\pi|_{X}})\p {-1}$ is $C\p k$ as a composition of $C\p k$ functions, where $\widehat \pi : \R\p{n+1}\to \R$ is the projection onto the last variable.
\end{proof}

\begin{proof}[Proof of Fact \ref{fact:perturbed_formula}]
We first check that for all $(x,t)\in \epii f$, $T_{\epii f}(x,f(x))\subseteq T_{\epii f}(x,t)$. Polarizing then yields $\widehat N_{\epii f}(x,t) = T_{\epii f}(x,t)\p * \subseteq T_{\epii f}(x,f(x))\p * = \widehat N_{\epii f}(x,f(x))$ by \cite[Theorem 6.28]{rockafellar2009variational}. Let $w \in T_{\epii f}(x,f(x))$. There exist $\epii f\ni (x_k,t_k)\to (x,f(x))$ and $\tau_k\searrow 0$ such that $[(x_k,t_k) - (x,f(x))]/\tau_k \to w$. Thus $[(x_k,t_k+t-f(x)) - (x,t)]/\tau_k \to w$ and $\epii f\ni (x_k,t_k+t-f(x))\to (x,t)$, so that $w \in T_{\epii f}(x,t)$.

    Let $X\in \cX$, $(x,f(x))\in X$, and $v\in N_{\epii f}(x,f(x))$. There exist $\epii f \ni (x_k,t_k)\to (x,f(x))$ and $v_k\in \widehat{N}_{\epii f}(x_k,t_k) \subseteq \widehat{N}_{\epii f}(x_k,f(x_k))$ such that $v_k\to v$. 
    Since $f$ is lower semicontinous, $f(x)=\liminf_{k\to\infty} t_k \geq  \liminf_{k\to\infty} f(x_k)\geq f(x)$, so that $(x_k,f(x_k))\to (x,f(x))$. Since there are only finitely many strata in $\cX$ near $(x,f(x))$, by taking a subsequence if necessary, there exists $Y\in \cX$ such that $(x_k,f(x_k))\in Y$ for all $k\in \mathbb{N}$. If $X = Y$, then $X \ni (x_k,f(x_k))\to (x,f(x))$ and $v_k\in \widehat{N}_{\epii f}(x_k,f(x_k)) \subseteq \widehat{N}_X (x_k,f(x_k))$ with $v_k\to v$, using the fact that $X \subseteq \gph f \subseteq \epii f$. Thus $v \in N_X (x,f(x)) = N_{(x,f(x))} X$, where trhe equality holds because $X$ is an embedded submanifold of $\R\p n$. 
    If $X \neq Y$, then $X \cap Y = \emptyset$ and $X\cap \overline{Y} \neq \emptyset$, so that $X \subseteq \overline{Y}\setminus Y$. The Verdier condition yields 
    $$ d(\mathrm{span}(v_k),N_{(x,f(x))} X) \leq d(N_{(x_k,f(x_k))} Y, N_{(x,f(x))} X) = d(T_{(x,f(x))}X,T_{(x_k,f(x_k))}Y)  \to 0.$$
    We conclude that $v \in N_{(x,f(x))} X$ as in the proof of \cref{prop:orbital_projection_formula}.
\end{proof}

\section*{Funding and/or conflicts of interest/competing interests}

I have no conflicts of interests or competing interests to declare.

\bibliographystyle{abbrv}    
\bibliography{symmetry}
\end{document}